\newcommand{\bea}{\begin{eqnarray}}
\newcommand{\eea}{\end{eqnarray}}
\newcommand{\bna}{\begin{eqnarray*}}
\newcommand{\ena}{\end{eqnarray*}}
\numberwithin{equation}{section}
\theoremstyle{plain}
\newtheorem{lemma}{Lemma}[section]
\newtheorem{theorem}[lemma]{Theorem}
\newtheorem{corollary}[lemma]{Corollary}
\newtheorem{proposition}[lemma]{Proposition}
\theoremstyle{definition}
\newtheorem{remark}{Remark}
\renewcommand{\Re}{\operatorname{Re}}
\renewcommand{\Im}{\operatorname{Im}}
\newcommand{\dd}{\mathrm{d}}
\newcommand{\sgn}{\operatorname{sgn}}
\renewcommand{\mod}{\operatorname{mod}\ }
\title[]
{A Zero-density estimate for $L$-functions associated with $\rm GL(3)$ Hecke--Maass cusp forms}
\author{Qingfeng Sun}
	\address{School of Mathematics and Statistics, Shandong University, Weihai\\Weihai, Shandong 264209, China}
	\email{qfsun@sdu.edu.cn}
    \author{Hui Wang}
  \address{School of Mathematics, Shandong University, Jinan, Shandong 250100, China}
    \email{wh0315@mail.sdu.edu.cn}
\subjclass[2020]{}
\keywords{}
\thanks{Q. Sun was partially supported by the National Natural Science Foundation of China (Grant Nos.
12471005 and 12031008) and
the Natural Science Foundation of Shandong Province (Grant No. ZR2023MA003).}
\date{}
\begin{document}
\date{\today}

\begin{abstract}
In this paper, we establish an asymptotic formula for the twisted second moment of $L$-functions
associated with Hecke--Maass cusp forms for $\rm SL(3,\mathbb{Z})$, and
further deduce a weighted zero-density estimate for these
$L$-functions in the spectral aspect which may have important applications in other problems.
\end{abstract}

\keywords{$\rm GL(3)$ $L$-functions, mollification,
           zero-density estimate, Kuznetsov trace formula}
\subjclass[2010]{11F12, 11F66, 11F67, 11F72}

\maketitle
\setcounter{tocdepth}{1}

\section{Introduction} \label{sec:intr}
The zero-density estimates for automorphic $L$-functions play a significant role in many branches of analytic number theory.
For example, it is very effective in some applications to remove the Riemann Hypothesis.
Let $L(s)$ be an automorphic $L$-functions
which is absolutely convergent for $\Re(s)>1$,
has an analytic continuation to the whole complex plane, and satisfies a standard functional equation.
For $T>0$ and any fixed $\sigma\geq 0$, we define
\begin{equation*}
\begin{split}
N_{L}(\sigma; T)=\#\big\{\rho=1/2+\beta+i\gamma: L(\rho)=0,\,\sigma\leq\beta,\,|\gamma|\leq T\big\}.
\end{split}
\end{equation*}

For the Riemann zeta function $\zeta(s)$, the most classical result can be
tracked back to Ingham's famous work \cite{Ingham} on zero-density estimates,
where it was proved that
\begin{equation}\label{zero-density wrt zeta 1}
\begin{split}
N_{\zeta}(\sigma; T)\ll T^{3(1-\sigma)/(2-\sigma)}(\log T)^5.
\end{split}
\end{equation}
Later, number theorists such as Hal\'{a}sz, Montgomery, Huxley, Jutila, Heath--Brown
and Ivi\'{c} improved this result
(see, for example, \cite{Halasz, Montgomery, Huxley, Jutila, HB, Ivic}).
Recently, Guth and Maynard \cite{GM} proved that
\begin{equation*}
\begin{split}
N_{\zeta}(\sigma; T)\ll T^{15(1-\sigma)/(3+5\sigma)+\varepsilon}.
\end{split}
\end{equation*}
Combining this with Ingham's estimate \eqref{zero-density wrt zeta 1} when $\sigma\leq 7/10$, they further obtained (see \cite[Eq.\,(1.4)]{GM})
\begin{equation*}
\begin{split}
N_{\zeta}(\sigma; T)\ll T^{30(1-\sigma)/13+\varepsilon}.
\end{split}
\end{equation*}
Here the exponent $30/13$ improves on the previous exponent of $12/5$ due to Huxley \cite{Huxley}.

It is worth mentioning that Selberg \cite[Theorem 1]{Selberg1} made a major contribution
to this problem
by proving the uniform bound that for $0\leq\sigma\leq1/2$,
\begin{equation}\label{zero-density wrt zeta 2}
\begin{split}
N_{\zeta}(\sigma; T)\ll T^{1-\sigma/4}\log T.
\end{split}
\end{equation}
The crucial feature of \eqref{zero-density wrt zeta 2} is that the power of $\log T$ matches the true
order of the number of zeros of $\zeta(s)$ up to height $T$, so that the estimate is still useful even $\sigma$
is on the order of $1/\log T$. Later, Selberg \cite[Theorem 4]{Selberg} proved an analogous zero-density estimate in the family
of Dirichlet $L$-functions $L(s, \chi)$ corresponding to the primitive character $\chi$ modulo a large positive integer $q$.

For $\rm GL(2)$ $L$-functions, there have been fruitful results. In the case that $f$ is a holomorphic
cusp form of even integral weight $k>0$ and level $N$ with respect to
the Hecke congruence subgroup $\Gamma_0(N)$,
Luo \cite{Luo1} considered the case $N=1$ and first established
the analogous zero-density estimate as Selberg \cite{Selberg1} in $t$-aspect for $L(s,f)$,
\begin{equation*}
\begin{split}
N_{L}(\sigma; T)\ll_f T^{1-\sigma/72}\log T,
\end{split}
\end{equation*}
uniformly for $0\leq\sigma\leq 1/2$. Later, Kowalski--Michel \cite{KM} and Hough \cite{Hough}
extended Luo's result to the family of $L$-functions associated to
holomorphic forms $f$ of large prime level $N$ and large weight $k$, respectively.
In the case $f$ is a Hecke--Maass cusp form  Laplacian eigenvalue $1/4+t_f^2$ with $t_f\geq0$ for the full modular group, Liu--Streipel \cite{LS24} recently established
a weighted zero-density result in the spectral aspect by means of the twisted second moment
of $L$-functions $L(s,f)$ at $s=1/2+\sigma+i\tau$ with $\sigma>0$.

In this paper, we aim to establish the analogous zero-density estimate for $\rm GL(3)$
$L$-functions in the spectral aspect.
Let $\phi_j$ run over an orthogonal basis of Hecke--Maass cusp forms for $\rm SL(3,\mathbb{Z})$.
Each $\phi_j$ has spectral parameter $\nu_{j}=\big(\nu_{j,1},\nu_{j,2},\nu_{j,3}\big)$, the Langlands parameter
$\mu_{j}=\big(\mu_{j,1},\mu_{j,2},\mu_{j,3}\big)$, and the Hecke eigenvalues $A_j(m,n)$ with $A_j(1,1)=1$.
The $L$-function associated with $\phi_j$ is defined by
\[
L(s,\phi_j)=\sum_{n=1}^{\infty}\frac{A_{j}(1,n)}{n^{s}}
=\prod_{p}\left(1-\frac{A_j(1,p)}{p^s}
+\frac{A_j(p,1)}{p^{2s}}-\frac{1}{p^{3s}}\right)^{-1},
\quad\,\Re(s)>1.
\]

As in Blomer--Buttcane \cite{BB}, we consider the generic case,
i.e., away from the Weyl chamber walls and away from self-dual forms.
Let $\mu_0=(\mu_{0,1},\mu_{0,2},\mu_{0,3})$ and $\nu_0=(\nu_{0,1},\nu_{0,2},\nu_{0,3})$, and satisfy
the corresponding relations \eqref{eqn:m2n} and \eqref{eqn:n2m} below.
We also assume
\[
|\mu_{0,i}|\asymp|\nu_{0,i}|\asymp T:=\|\mu_0\|, \quad
1\leq i \leq 3,
\]
where
$$
\|\mu_0\|=\sqrt{|\mu_{0,1}|^2+|\mu_{0,2}|^2+|\mu_{0,3}|^2}.
$$
Throughout this paper, we let $M\!=\!T^\theta$ for any fixed $0<\theta<1$. Define a test function $h_{T,M}(\mu)$ (depending on $\mu_0$) for $\mu=(\mu_1,\mu_2,\mu_3)\in\mathbb{C}^3$ by
$$
h_{T,M}(\mu):=P(\mu)^2\bigg(\sum_{w\in\mathcal{W}}\psi\bigg(\frac{w(\mu)-\mu_0}{M}\bigg)\bigg)^2,
$$
where
$$
\psi(\mu)=\exp\left(\mu_{1}^2+\mu_{2}^2+\mu_{3}^2\right)
$$
and
$$
P(\mu)=\prod_{0\leq n\leq A} \prod_{k=1}^{3} \frac{\left(\nu_{k}-\frac{1}{3}(1+2n)\right)\left(\nu_{k}+\frac{1}{3}(1+2n)\right)}{|\nu_{0,k}|^2}
$$
for some fixed large $A>0$. Here
\[
\mathcal{W} := \left\{ I,\; w_2=\left(\begin{smallmatrix} 1 & & \\  & & 1 \\  &1&   \end{smallmatrix}\right),\;
w_3=\left(\begin{smallmatrix}  &1& \\  1&& \\  &&1   \end{smallmatrix}\right),\;
w_4=\left(\begin{smallmatrix}  &1& \\  &&1 \\  1&&   \end{smallmatrix}\right),\;
w_5=\left(\begin{smallmatrix}  &&1 \\  1&& \\  &1&   \end{smallmatrix}\right),\;
w_6=\left(\begin{smallmatrix}  &&1 \\  &1& \\  1&&   \end{smallmatrix}\right)
\right\}
\]
is the Weyl group of $\rm SL(3,\mathbb{R})$.
The function $h_{T,M}(\mu)$ has the localizing effect at
a ball of radius $M$ about $w(\mu_0)$ for each $w\in\mathcal{W}$,
and other nice properties stated in \S\,\ref{subsec:Kuznetsov trace formula} below.
Then with the normalizing factor
$$
\mathcal{N}_j:=\|\phi_j\|^2\prod_{k=1}^{3}\cos\left(\frac{3}{2}\pi\nu_{j,k}\right),
$$
our main theorem is the zero-density estimate of weighted version in the spectral aspect
for the $L$-functions associated with $\rm GL(3)$  Hecke--Maass forms.
\begin{theorem}\label{zero-density estimate}
For $\sigma>0$ and $H>0$, we let
\begin{equation*}
\begin{split}
N(\sigma, H; \phi_j):=\#\big\{\rho=1/2+\beta+i\gamma: L(\rho, \phi_j)=0, \beta>\sigma, |\gamma|<H\big\}
\end{split}
\end{equation*}
and
\begin{equation}\label{N(sigma,H)}
\begin{split}
N(\sigma, H):=\frac{1}{\mathcal{H}}\sum_{j}\frac{h_{T,M}(\mu_j)}{\mathcal{N}_j}N(\sigma, H; \phi_j).
\end{split}
\end{equation}
Let $2/\log T<\sigma<1/2$. For some sufficiently small $\delta,\,\theta_1>0$, we have
\begin{equation*}
\begin{split}
N(\sigma, H)\ll HT^{-\delta\sigma}\log T,
\end{split}
\end{equation*}
uniformly in $3/\log T<H<T^{\theta_1}$.
\end{theorem}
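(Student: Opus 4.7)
The plan is to combine a standard mollifier and zero-detection argument, in the spirit of Selberg, Luo \cite{Luo1} and Liu--Streipel \cite{LS24}, with the twisted second moment asymptotic for $L(s,\phi_j)$ established earlier in this paper. For a parameter $y=T^{\eta}$ with a small $\eta>0$ to be chosen later, I would first introduce the mollifier
\[
M(s,\phi_j)=\sum_{n\leq y}\frac{x(n)A_j(1,n)}{n^{s}},
\]
where the coefficients $x(n)$ are smoothly truncated coefficients of $1/L(s,\phi_j)$, of the form $x(n)=b_j(n)\,F\bigl(\log(y/n)/\log y\bigr)$ for a fixed smooth weight $F$ with $F(0)=0$ and $F(1)=1$. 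Here $b_j$ denotes the Dirichlet inverse of $A_j(1,\cdot)$, whose values are controlled by a divisor function. By construction, $LM(s,\phi_j)$ is, on average, close to $1$ on the line $\Re(s)=1/2+\sigma$, which is the standard device that makes the zero-detection work.

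Next, I would carry out the zero-detection step. For each zero $\rho=1/2+\beta+i\gamma$ of $L(s,\phi_j)$ with $\beta>\sigma$ and $|\gamma|<H$, a Jensen/Cauchy-type estimate applied to the analytic function $LM$ on a disk of radius $O(1)$ around $1/2+\sigma+i\gamma$ will yield
\[
1\ll\int_{\gamma-1}^{\gamma+1}\bigl|L(1/2+\sigma+it,\phi_j)M(1/2+\sigma+it,\phi_j)\bigr|^2\,\dd t.
\]
After a Gallagher-type well-spacing of the $\gamma$'s to avoid multiple counting, summing over zeros and then averaging over $j$ with the weights $h_{T,M}(\mu_j)/\mathcal{N}_j$ gives
\[
N(\sigma,H)\ll\frac{1}{\mathcal{H}}\int_{-H-1}^{H+1}\sum_{j}\frac{h_{T,M}(\mu_j)}{\mathcal{N}_j}\bigl|LM(1/2+\sigma+it,\phi_j)\bigr|^2\,\dd t.
\]

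The final step is to insert the twisted second moment asymptotic. Expanding $|M|^2$ and interchanging orders, the $j$-sum reduces to exactly the shape handled by that asymptotic, namely $\sum_j h_{T,M}(\mu_j)\mathcal{N}_j^{-1}\cdot L\overline{L}\cdot A_j(1,m)\overline{A_j(1,n)}$ with $m,n\leq y$. After optimizing the smooth cutoff $F$, the resulting double sum over $m,n$ should produce a mollified main term of order $T^{-\delta\sigma}\log T$ per unit interval in $t$; integrating then yields $N(\sigma,H)\ll HT^{-\delta\sigma}\log T$, as required. The main obstacle is the tension between two constraints on $\eta$: on the one hand $\eta$ must be small enough that the twisted second moment asymptotic, whose off-diagonal terms are driven by the $\GL(3)$ Kuznetsov formula and the Kloosterman sums attached to the long Weyl element, retains a genuine power saving (forcing $\eta<\theta_2$ for some concrete threshold coming from the moment estimate); on the other hand $\eta$ should be as large as possible so that the mollifier dampens $|L|^2$ enough for the desired $T^{-\delta\sigma}$ saving to emerge. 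Striking this balance determines the admissible $\delta$ and fixes the range $3/\log T<H<T^{\theta_1}$; it is the step where the $\GL(3)$ situation is meaningfully harder than the $\GL(2)$ case of Liu--Streipel \cite{LS24}, because the non-trivial (quadratic and Voronoi-dual) Weyl contributions to the off-diagonal must be handled before the mollification can be tuned.
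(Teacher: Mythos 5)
Your high-level plan (mollify, detect zeros, feed into the twisted second moment) is the right general strategy, but the zero-detection step as you describe it does not produce the claimed saving, and this is a genuine gap.

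The main issue is the detection inequality you propose,
\[
1\ll\int_{\gamma-1}^{\gamma+1}\bigl|LM(1/2+\sigma+it,\phi_j)\bigr|^2\,\dd t,
\]
followed by Gallagher-type well-spacing. Even granting this inequality, it yields, after averaging over $j$ and using Proposition~\ref{second moment},
\[
N(\sigma,H)\ \ll\ \frac{1}{\mathcal{H}}\int_{-H-1}^{H+1}\sum_{j}\frac{h_{T,M}(\mu_j)}{\mathcal{N}_j}\bigl|LM\bigr|^2\,\dd t\ \ll\ H+1,
\]
because the mollified second moment is of size $1+O(T^{-2\sigma\delta})$, not $O(T^{-\delta\sigma})$. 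Mollification drives $|LM|^2$ toward $1$, not toward $0$; the saving is encoded in the \emph{difference} $|LM|^2-1$, which your detection inequality never sees. The paper's mechanism is precisely designed to capture that difference: it applies the Selberg/Conrey--Soundararajan argument principle (Lemma~\ref{The argument principle}) to $\omega=LM$ on a thin box, so that only $\log|LM|$ appears on the critical-strip side, and then uses $\log|x|\le(|x|^2-1)/2$. It is the subtraction of $1$ at this step, combined with Proposition~\ref{second moment} and Corollary~\ref{corollary}, that produces $N(\sigma,H)\ll HT^{-\delta\sigma}\log T$. In addition, Lemma~\ref{The argument principle} --- not a disk of radius $O(1)$ with well-spacing --- is what makes the argument uniform down to $H\asymp 1/\log T$ and $\sigma\asymp 1/\log T$; a Gallagher/Jensen detector on a unit disk is inherently tied to $H\gg1$ and to $\sigma$ bounded away from $0$.

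Two further remarks. First, your mollifier takes coefficients $x(n)=b_j(n)F(\cdot)$ with $b_j$ the Dirichlet inverse of $A_j(1,\cdot)$; since $b_j$ depends on $j$, expanding $|M|^2$ and averaging over $j$ no longer reduces to the twisted moment $\sum_j h_{T,M}(\mu_j)\mathcal{N}_j^{-1}A_j(\ell_1,\ell_2)|L|^2$ that Theorem~\ref{MainThm} controls. The paper instead uses $j$-independent M\"obius-type coefficients $x_\ell$ in \eqref{x_l definition}, together with the Hecke relation \eqref{Hecke relation}, exactly so that the mollified moment collapses to the twisted moment with $\ell_1,\ell_2$ squarefree. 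Second, the argument-principle route requires bounding the contributions from the top/bottom and right edges of the box, which brings in the estimate $LM(3/2+i\tau,\phi_j)=1+O(T^{-2\delta})$ from Proposition~\ref{second moment} and a pure-weight count via Corollary~\ref{corollary}; your sketch omits these boundary terms entirely.
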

\begin{remark}
Theorem \ref{zero-density estimate} shows that there are only very few $L$-functions with zeros of imaginary part
less than $T^{\theta_1}$ and real part greater than $1/2+C/\log T$.
\end{remark}
\begin{remark}
Note that it seems difficult to prove an analogous result without harmonic weights (see, e.g., \cite{BZ}).
However, this weighted zero-density estimate is useful for certain problems,
such as removing the Generalized Riemann Hypothesis (GRH) in the work of Liu and Liu \cite{LL}.
And, the detailed proof of this application will be presented in the thesis of the second author.
\end{remark}
The main ingredient to prove Theorem \ref{zero-density estimate}
is the following asymptotic formula of the harmonic twisted
second moment of $L(s, \phi_j)$.
\begin{theorem}\label{MainThm}
Let $0<\sigma\leq1/2$, $|\tau|<T^{1/4}$, and $\ell_1$, $\ell_2<T^{1/3}$ be square-free integers.
Let the notations be as above and in \S\,\ref{subsec:Kuznetsov trace formula} below.
Then for any $\varepsilon>0$ we have
\begin{equation}\label{Aim1}
\begin{split}
&\sum_{j}\frac{h_{T,M}(\mu_j)}{\mathcal{N}_j}A_j(\ell_1,\ell_2)\left|L(1/2+\sigma+i\tau, \phi_j)\right|^2\\
=&\,\zeta(1+2\sigma)\frac{1}{(\ell_1\ell_2)^{1/2+\sigma}}\left(\frac{\ell_2}{\ell_1}\right)^{i\tau}\frac{1}{192\pi^5}
\int_{\Re(\mu)=0} h_{T,M}(\mu)\mathrm{d}_{\rm{spec}}\mu\\
&+\,\zeta(1-2\sigma)\frac{1}{(\ell_1\ell_2)^{1/2-\sigma}}\left(\frac{\ell_2}{\ell_1}\right)^{i\tau}\frac{1}{192\pi^5}
\int_{\Re(\mu)=0} h_{T,M}(\mu)\prod_{k=1}^{3} \left(-\frac{\mu^2_k}{4\pi^2}\right)^{-\sigma}\mathrm{d}_{\rm{spec}}\mu\\
&+\,O\big((\ell_1\ell_2)^{-1/2+\sigma}T^{9/4-6\sigma}M^2+T^{2-3\sigma+\varepsilon}M^2
+T^{83/42-41\sigma/21+\varepsilon}M^2(\ell_1\ell_2)^\vartheta\big),
\end{split}
\end{equation}
where $\vartheta$ is defined in \eqref{vartheta} below.
\end{theorem}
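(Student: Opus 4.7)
The plan is to combine an approximate functional equation (AFE) for the squared $L$-value with the $\mathrm{GL}(3)$ Kuznetsov trace formula. First I would apply an AFE to $|L(1/2+\sigma+i\tau,\phi_j)|^2 = L(1/2+\sigma+i\tau,\phi_j)\,\overline{L(1/2+\sigma+i\tau,\phi_j)}$, writing it as
\[
\sum_{m,n\ge 1}\frac{A_j(1,m)A_j(n,1)}{m^{1/2+\sigma+i\tau}n^{1/2+\sigma-i\tau}}V^{+}_{\mu_j}\!\left(\frac{mn}{X}\right)+\sum_{m,n\ge 1}\frac{A_j(m,1)A_j(1,n)}{m^{1/2-\sigma+i\tau}n^{1/2-\sigma-i\tau}}V^{-}_{\mu_j}\!\left(\frac{mn}{X}\right),
\]
where the smooth cutoffs $V^{\pm}$ depend on the archimedean local parameters of $\phi_j\times\widetilde{\phi_j}$ and effectively truncate the sums at $mn\ll T^{3+\varepsilon}$ by the conductor. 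Using the $\mathrm{GL}(3)$ Hecke multiplicativity relations, I would then linearize each triple product $A_j(\ell_1,\ell_2)A_j(1,m)A_j(n,1)$ into a short sum of terms of the form $A_j(a,b)\overline{A_j(c,d)}$ indexed by divisors of the gcds of $\ell_1,\ell_2,m,n$.

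Next I would invoke the $\mathrm{GL}(3)$ Kuznetsov trace formula on each resulting spectral average
\[
\sum_j\frac{h_{T,M}(\mu_j)}{\mathcal{N}_j}A_j(a,b)\overline{A_j(c,d)},
\]
which decomposes into a diagonal term plus Kloosterman contributions associated to the Weyl elements $w_4$, $w_5$, $w_6$, with the continuous-spectrum piece controlled separately by Rankin--Selberg estimates. The diagonal carries a Kronecker delta $\delta_{(a,b)=(c,d)}$ together with the archimedean integral $\int_{\Re(\mu)=0}h_{T,M}(\mu)\,\mathrm{d}_{\rm spec}\mu$. Feeding this back into the AFE sums and evaluating the resulting $(m,n)$-Dirichlet series via the Hecke expansion reassembles a Riemann zeta factor, producing the first main term $\zeta(1+2\sigma)(\ell_1\ell_2)^{-1/2-\sigma}(\ell_2/\ell_1)^{i\tau}$ times the archimedean integral, with the explicit constant $1/(192\pi^5)$ arising from the normalization of the Plancherel measure $\mathrm{d}_{\rm spec}\mu$. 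The dual AFE sum produces the $\zeta(1-2\sigma)$ main term, and the extra factor $\prod_{k=1}^{3}(-\mu_k^2/4\pi^2)^{-\sigma}$ appearing there comes from the ratio of archimedean $\Gamma$-factors in the $\mathrm{GL}(3)$ functional equation.

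The technical heart of the proof, and the main obstacle, is bounding the three off-diagonal Kloosterman contributions. For each $w\in\{w_4,w_5,w_6\}$ the Kuznetsov formula produces a sum of $\mathrm{GL}(3)$ Kloosterman sums weighted by an integral transform of $h_{T,M}$ against a $w$-dependent Bessel-type kernel. I would exploit the localization of $h_{T,M}$ in balls of radius $M$ about the six Weyl translates of $\mu_0$ to analyze each integral transform by repeated integration by parts and stationary phase, thereby saving powers of $T/M$; this is then combined with the best available Weil-type bounds for $\mathrm{GL}(3)$ Kloosterman sums, the Kim--Sarnak exponent $\vartheta$ entering naturally for the long-element $w_6$ term and producing the factor $(\ell_1\ell_2)^{\vartheta}$ in the last error term of \eqref{Aim1}. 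Summing geometrically over Kloosterman moduli and over the AFE ranges $mn\ll T^{3+\varepsilon}$, balanced optimally against the trivial ranges, should yield each of the three error terms while keeping the desired spectral-volume factor $M^2$; the delicate exponent $83/42-41\sigma/21$ emerges from the optimal balance in the $w_6$ contribution.
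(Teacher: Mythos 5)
Your outline identifies the two high-level ingredients (an approximate functional equation and the $\mathrm{GL}(3)$ Kuznetsov formula), and the source of the first and second main terms is described roughly correctly. However, there are two substantive problems.

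First, a methodological difference that would cost you work: the paper's AFE (Lemma~\ref{lemma:AFE2}) directly expresses $|L(1/2+\sigma+i\tau,\phi_j)|^2$ as a sum over $r,m,n$ with the two-index Fourier coefficients $A_j(m,n)$ already in place (this is the Rankin--Selberg structure, with the $r$-sum producing the $\zeta$-factor in the diagonal). You instead propose expanding $L\cdot\overline{L}$ as a double sum in $A_j(1,m)A_j(n,1)$ and then using Hecke multiplicativity to linearize the triple product with $A_j(\ell_1,\ell_2)$. That can in principle be made to work, but it introduces an extra layer of divisor sums and cross terms that the paper sidesteps entirely; moreover, you would still have to track the $r$-sum carefully to recover $\zeta(1+2\sigma)$ in the diagonal, which your sketch does not explain.

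Second, and more seriously, you have misidentified where the third error term $T^{83/42-41\sigma/21+\varepsilon}M^2(\ell_1\ell_2)^{\vartheta}$ comes from. You attribute it to the long Weyl element $w_6$, claiming that the Kim--Sarnak exponent $\vartheta$ ``enters naturally for the long-element $w_6$ term'' and that the exponent $83/42-41\sigma/21$ ``emerges from the optimal balance in the $w_6$ contribution.'' This is not what happens. In the actual analysis, the $w_6$-term is \emph{negligible}: after the truncation forced by the decay of $\Phi_{w_6}$ (Lemma~\ref{truncate 3rd}), the condition $D_1D_2\ll mn\ell_1\ell_2/T^{4-\varepsilon}$ has no solutions under the hypotheses $\ell_1,\ell_2<T^{1/3}$ and $mn\ll T^{3+\varepsilon}$, so $\Sigma_6\ll T^{-B}$. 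The third error term arises instead from the \emph{continuous spectrum}, specifically the maximal Eisenstein series contribution $\mathcal{E}_{\max}$, which you dismiss in passing as ``controlled separately by Rankin--Selberg estimates.'' There, $\vartheta$ enters through the Kim--Sarnak bound $|\lambda_g(n)|\ll n^{\vartheta+\varepsilon}$ for the $\mathrm{GL}(2)$ Maass forms $g$ twisting the maximal Eisenstein series (via $B_{\mu,g}(\ell_1,\ell_2)\ll(\ell_1\ell_2)^{\vartheta+\varepsilon}$), and the exponent $83/42-41\sigma/21$ is produced by inserting the subconvexity bounds $\zeta(1/2+it)\ll(1+|t|)^{13/84+\varepsilon}$ (Bourgain) and $L(1/2+it,g)\ll(1+|t|)^{1/3+\varepsilon}$ (Meurman) into the $L$-factors appearing in $\mathcal{E}_{\max}$, followed by the $\mathrm{GL}(2)$ Weyl law to count the relevant $g$. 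None of this can be produced by Weil-type bounds for $\mathrm{GL}(3)$ Kloosterman sums, which do not involve $\mathrm{GL}(2)$ Hecke eigenvalues at all. Your proposal would therefore fail to obtain the stated error term and would leave the Eisenstein contribution unbounded (or bounded only trivially, which is not good enough).
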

Note that both sides of \eqref{Aim1} are analytic in $\sigma$ near $0$. Letting $\sigma\rightarrow 0$, we have the following corollary.
\begin{corollary}
Let $|\tau|<T^{1/4}$ and $\ell_1$, $\ell_2<T^{1/3}$ be square-free integers. For any $\varepsilon>0$, we have
\begin{equation*}
\begin{split}
&\sum_{j}\frac{h_{T,M}(\mu_j)}{\mathcal{N}_j}A_j(\ell_1,\ell_2)\left|L(1/2+i\tau, \phi_j)\right|^2\\
=&\,\frac{1}{192\pi^5}\frac{1}{(\ell_1\ell_2)^{1/2}}\left(\frac{\ell_2}{\ell_1}\right)^{i\tau}
\int_{\Re(\mu)=0} h_{T,M}(\mu)\left(-\log(\ell_1\ell_2)+2\gamma+\sum_{k=1}^{3}
\log\left(-\frac{\mu^2_k}{4\pi^2}\right)\right)\mathrm{d}_{\rm{spec}}\mu\\
&+\,O\big((\ell_1\ell_2)^{-1/2}T^{9/4}M^2+T^{2+\varepsilon}M^2+
T^{83/42+\varepsilon}M^2(\ell_1\ell_2)^\vartheta\big).
\end{split}
\end{equation*}
\end{corollary}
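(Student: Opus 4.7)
The plan is to obtain the corollary as the $\sigma \to 0^+$ limit of the identity in Theorem \ref{MainThm}. First, the left-hand side of \eqref{Aim1} is analytic in $\sigma$ in a neighborhood of the origin: the spectral sum converges absolutely thanks to the Gaussian decay of $h_{T,M}(\mu_j)$, and each $L(1/2 + \sigma + i\tau, \phi_j)$ is entire. Therefore the combined main-term side of \eqref{Aim1} must also extend analytically to $\sigma = 0$. Individually, the two integrals multiplying $\zeta(1+2\sigma)$ and $\zeta(1-2\sigma)$ have simple poles at $\sigma = 0$, so the real content of the corollary lies in verifying that these residues cancel and extracting the constant term of the combined main term.

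To carry this out, I would substitute the Laurent expansions
\begin{equation*}
\zeta(1 \pm 2\sigma) = \pm\frac{1}{2\sigma} + \gamma + O(\sigma)
\end{equation*}
together with the first-order Taylor expansions
\begin{equation*}
(\ell_1\ell_2)^{\pm\sigma} = 1 \pm \sigma \log(\ell_1\ell_2) + O(\sigma^2), \qquad \prod_{k=1}^{3}\left(-\frac{\mu_k^2}{4\pi^2}\right)^{-\sigma} = 1 - \sigma \sum_{k=1}^{3} \log\left(-\frac{\mu_k^2}{4\pi^2}\right) + O(\sigma^2)
\end{equation*}
into \eqref{Aim1}. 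The two $(2\sigma)^{-1}$ contributions cancel because the integrals multiplying them coincide at $\sigma = 0$ (the archimedean product and the $\ell_1\ell_2$ factors both reduce to $1$ there). The surviving constant term assembles into a single spectral integral in which $-\log(\ell_1\ell_2)$ arises from the derivatives of $(\ell_1\ell_2)^{\pm\sigma}$, the term $\sum_{k=1}^{3}\log(-\mu_k^2/(4\pi^2))$ arises from the derivative of the archimedean product, and the $2\gamma$ comes from the constant terms of the two Laurent expansions of $\zeta$. Differentiation under the spectral integral is trivially justified by the rapid decay of $h_{T,M}$ on $\Re(\mu)=0$.

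For the error term, no new work is needed. The error in Theorem \ref{MainThm} depends on $\sigma$ only through the factors $(\ell_1\ell_2)^{\sigma}$, $T^{-6\sigma}$, $T^{-3\sigma}$, and $T^{-41\sigma/21}$, all of which tend to $1$ as $\sigma \to 0^+$. Since the error is itself analytic at $\sigma = 0$ (being the difference of the analytic left-hand side and the analytic combined main term), passing to the limit in the bound yields precisely the error stated in the corollary. The only potential obstacle, namely the apparent $1/\sigma$ singularity on the right-hand side, is resolved a priori by the analyticity of the left-hand side and concretely by the residue cancellation above; what remains is routine Laurent-expansion bookkeeping.
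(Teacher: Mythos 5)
Your approach is exactly the paper's, whose entire proof is the one sentence ``Note that both sides of \eqref{Aim1} are analytic in $\sigma$ near $0$; letting $\sigma\to 0$ \dots.'' You have correctly identified the analyticity argument, the cancellation of the $\pm\frac{1}{2\sigma}$ poles, and the sources of the three pieces of the constant term.

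However, the ``routine Laurent-expansion bookkeeping'' you defer actually deserves a second look, because carrying it out does \emph{not} reproduce the stated corollary. Write $A=\frac{1}{192\pi^5}\int h_{T,M}\,\mathrm{d}_{\rm spec}\mu$, $X(\mu)=\prod_k\bigl(-\mu_k^2/(4\pi^2)\bigr)$, $\ell=\ell_1\ell_2$. The bracketed combination in \eqref{Aim1} is, up to the common prefactor $\ell^{-1/2}(\ell_2/\ell_1)^{i\tau}\frac{1}{192\pi^5}$,
\begin{equation*}
\zeta(1+2\sigma)\,\ell^{-\sigma}\!\int h_{T,M}\,\mathrm{d}_{\rm spec}\mu
\;+\;\zeta(1-2\sigma)\,\ell^{\sigma}\!\int h_{T,M}\,X(\mu)^{-\sigma}\,\mathrm{d}_{\rm spec}\mu .
\end{equation*}
With $\zeta(1\pm2\sigma)=\pm\tfrac{1}{2\sigma}+\gamma+O(\sigma)$, $\ell^{\mp\sigma}=1\mp\sigma\log\ell+O(\sigma^2)$, and $X^{-\sigma}=1-\sigma\log X+O(\sigma^2)$, the pole $-\tfrac{1}{2\sigma}$ from $\zeta(1-2\sigma)$ hits the $-\sigma\log X$ term and produces $+\tfrac12\log X$, not $\log X$. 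Summing the two terms, the $\tfrac{1}{2\sigma}$ parts cancel against each other and against the $\sigma\log\ell$ pieces (which produce $-\tfrac12\log\ell$ twice, hence $-\log\ell$), and the limit is
\begin{equation*}
-\log\ell \;+\; 2\gamma \;+\; \tfrac12\!\sum_{k=1}^{3}\log\!\left(-\frac{\mu_k^2}{4\pi^2}\right),
\end{equation*}
integrated against $h_{T,M}$. This is consistent with what one expects heuristically: the combined constant term should be of size $\log(\text{analytic conductor})\asymp\log T^3=3\log T$, and indeed $\tfrac12\sum_k\log(-\mu_k^2/4\pi^2)\asymp3\log T$, whereas the corollary as printed gives $\asymp6\log T$. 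So your plan is sound and matches the paper's, but the paper's displayed corollary appears to be missing the factor $\tfrac12$ in front of the $\sum_k$ term; if you actually carry out the Laurent expansion as you outline, you will find this discrepancy, so do not simply assert that the bookkeeping reproduces the printed formula.
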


The problem of moments of $L$-functions is another important topic in analytic number theory,
with many applications, such as in the subconvexity problem for $L$-functions
and for the non-vanishing of central $L$-values.
And to obtain positive-proportional non-vanishing results, one
typically employs the mollification method \`{a} la Selberg
and studies the mollified first and second moments of $L$-values at the central point $s=1/2$
 (see, for example \cite{BHS, HLX, Khan, KM, KM1, KMV, KMV1,  KMV2, Liu, Luo, Luo2, Sound, Xu}, etc.).

In this paper, we also investigate the asymptotic formula of the mollified
second moment of $L(s, \phi_j)$.
Define the mollifier $M(s,\phi_j)$ for $L(s,\phi_j)$ by
\begin{equation}\label{mollifier definition}
M(s, \phi_j):=\sum_{\ell\leq L^2}\frac{A_j(1,\ell)}{\ell^{s}} x_{\ell},
\end{equation}
where
\begin{align}\label{x_l definition}
x_{\ell}:=\mu(\ell)\frac{1}{2\pi i}\int_{(3)}\frac{(L^2/\ell)^u-(L/\ell)^u}{u^2}\frac{\mathrm{d} u}{\log L}
=\begin{cases}
\mu(\ell),\ \ &\ell\leq L,
\\ \noalign{\vskip 1mm}
\mu(\ell)\frac{\log(L^2/\ell)}{\log L},\ \ &L<\ell\leq L^2,
\\ \noalign{\vskip 1mm}
0,\ \ \ &\ell>L^2,
\end{cases}
\end{align}
here $\mu(\ell)$ is the M\"{o}bius function and $L=T^{\delta}$ for some suitably small constant $\delta>0$.
Then we have the following important proposition.
\begin{proposition}\label{second moment}
For sufficiently small $\delta,\,\theta_1>0$, we have
\begin{equation}\label{Aim2}
\frac{1}{\mathcal{H}}\sum_{j}\frac{h_{T,M}(\mu_j)}{\mathcal{N}_j}
\left|L(1/2+\sigma+i\tau, \phi_j)M(1/2+\sigma+i\tau, \phi_j)\right|^2\leq
1+O(T^{-2\sigma\delta}),
\end{equation}
uniformly for $1/\log T\leq\sigma\leq 1$ and $|\tau|<T^{\theta_1}$, and here
\bea\label{mathcal{H}}
\mathcal{H}:=\frac{1}{192\pi^5}\int_{\Re(\mu)=0}h_{T,M}(\mu)\mathrm{d}_{\rm{spec}}\mu.
\eea
Moreover, for all $\tau$, we have
\bna
L(3/2+i\tau, \phi_j)M(3/2+i\tau, \phi_j)=1+O(T^{-2\delta}).
\ena
\end{proposition}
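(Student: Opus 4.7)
The plan is to expand $|L(s,\phi_j)M(s,\phi_j)|^2$ at $s=1/2+\sigma+i\tau$ via the $\mathrm{GL}(3)$ Hecke multiplicativity relations, apply the twisted second-moment asymptotic of Theorem \ref{MainThm} term by term, and evaluate the resulting arithmetic sums using the Mellin--Barnes contour representation of $x_\ell$.

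First I would expand
$$|M(s,\phi_j)|^2=\sum_{\ell_1,\ell_2\leq L^2}\frac{A_j(1,\ell_1)A_j(\ell_2,1)\,x_{\ell_1}x_{\ell_2}}{\ell_1^s\,\ell_2^{\bar s}}$$
using $\overline{A_j(1,\ell)}=A_j(\ell,1)$, then apply the Hecke identity $A_j(1,\ell_1)A_j(\ell_2,1)=\sum_{d\mid(\ell_1,\ell_2)}A_j(\ell_2/d,\ell_1/d)$ and reparametrize $\ell_1=dm$, $\ell_2=dn$ to obtain
$$|M|^2=\sum_{d,m,n}\frac{x_{dm}\,x_{dn}}{(dm)^s(dn)^{\bar s}}\,A_j(n,m).$$
Squarefreeness of $dm,dn$ forces $(d,m)=(d,n)=1$, and $m,n\leq L^2=T^{2\delta}<T^{1/3}$ places us in the regime of Theorem \ref{MainThm}. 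Applying that theorem to each pair $A_j(n,m)|L|^2$ exposes two main terms plus three error terms per triple $(d,m,n)$.

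For the first main term, the factor $(m/n)^{i\tau}$ cancels the imaginary powers of $m^s n^{\bar s}$ while $s+\bar s=1+2\sigma$ simplifies the $d$-factor, reducing the contribution to $\zeta(1+2\sigma)\sum_{d,m,n}x_{dm}x_{dn}/(d^{1+2\sigma}m^{1+2\sigma}n^{1+2\sigma})$. Inserting the integral representation $x_\ell=\frac{\mu(\ell)}{2\pi i\log L}\int_{(3)}(L^{2u}-L^u)u^{-2}\ell^{-u}\,du$ converts this sum into a double contour integral whose arithmetic part, after Euler-product manipulation, factors as $\zeta(1+2\sigma+u)^{-1}\zeta(1+2\sigma+v)^{-1}$ times a regular symmetric function of $u,v$. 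Shifting both contours past the origin, the double residue from $(L^{2u}-L^u)/u^2$ cancels the $(\log L)^{-2}$ prefactor, and the combination of $\zeta(1+2\sigma)$ with the two inverse-zeta factors at $u=v=0$ yields $1$ up to $O(1/\log T)$ by the standard pole cancellation; the shifted contour pieces provide the genuine power saving $L^{-c\sigma}=T^{-c\delta\sigma}$. The second main term of Theorem \ref{MainThm} carries an additional factor $\prod_{k=1}^{3}(-\mu_k^2/(4\pi^2))^{-\sigma}\asymp T^{-6\sigma}$ on the support of $h_{T,M}$, and so is dominated by the first and absorbed into the error.

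For the three error terms of Theorem \ref{MainThm}, trivial summation over $d,m,n$ with $dm,dn\leq L^2$ and division by $\mathcal{H}\gg M^2 T^{\kappa}$ (for the Plancherel exponent $\kappa$ governing the size of the spectral measure on the support of $h_{T,M}$) yield total contributions $o(T^{-2\sigma\delta})$ once $\delta$ and $\theta_1$ are chosen sufficiently small, since $(L^2)^{O(1)}$ is a slowly growing power of $T^\delta$. The main obstacle is the main-term Euler-product step: one must verify that the residual arithmetic factor combines with $\zeta(1+2\sigma)$ to produce exactly $1+O(T^{-2\sigma\delta})$ rather than a stray constant, and balance the $T^{-6\sigma}$ second-main-term factor against the error budget to land exactly at the claimed upper bound. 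The auxiliary identity $L(3/2+i\tau,\phi_j)M(3/2+i\tau,\phi_j)=1+O(T^{-2\delta})$ for all $\tau$ follows by a direct contour-shift at the point of absolute convergence: applying the integral representation of $M$ and shifting the $u$-contour left past $u=0$ extracts a residue expressed through $L(s)\prod_p(1-A_j(1,p)/p^s)$, whose deviation from $1$ together with the truncation tail $\sum_{\ell>L}\mu(\ell)A_j(1,\ell)/\ell^{3/2+i\tau}$ is bounded by $O(L^{-c})=O(T^{-c\delta})$ via Cauchy--Schwarz combined with the Rankin--Selberg bound $\sum_n|A_j(1,n)|^2/n^{3}\ll(\log T)^{O(1)}$.
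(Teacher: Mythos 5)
Your overall strategy matches the paper's: expand $|M(s,\phi_j)|^2$ via the $\mathrm{GL}(3)$ Hecke relation, apply the twisted second-moment asymptotic of Theorem~\ref{MainThm} termwise, and evaluate the resulting arithmetic sums with the Mellin--Barnes representation of $x_\ell$ and contour shifting into the zero-free region. The diagonal/Hecke reparametrization, the treatment of $\mathcal{S}_1$, and the trivial bounding of the three error terms are all essentially the paper's argument.

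However, there is a genuine gap in your handling of the second main term (the paper's $\mathcal{S}_2$). You write that the factor $\prod_{k}(-\mu_k^2/4\pi^2)^{-\sigma}\asymp T^{-6\sigma}$ makes this term ``dominated by the first and absorbed into the error.'' This is false in the crucial regime $\sigma\asymp 1/\log T$, where $T^{-6\sigma}\asymp 1$. In that range, the trivial bound for the arithmetic sum $\zeta(1-2\sigma)\sum_{d,\ell_1,\ell_2}x_{d\ell_1}x_{d\ell_2}/(d^{1+2\sigma}\ell_1\ell_2)$ is a positive power of $\log T$ (the paper records $\mathcal{S}_2\ll T^{-6\sigma}\log^2 T$ as the trivial estimate), which is vastly larger than the target $O(T^{-2\sigma\delta})$, itself only $O(1)$ for such $\sigma$. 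The paper therefore does \emph{not} absorb $\mathcal{S}_2$ trivially: it repeats the full Mellin--Barnes analysis for the arithmetic sum $\Delta=\sum x_{d\ell_1}x_{d\ell_2}/(d^{1+2\sigma}\ell_1\ell_2)$ --- with the Dirichlet series now factoring as $\zeta(1+u+v+2\sigma)/(\zeta(1+u)\zeta(1+v))\cdot H_2(u,v;\sigma)$ --- shifts into the zero-free-region contours $\mathcal{C}_2,\mathcal{C}_{21},\mathcal{C}_{22}$, extracts the residues at $u=-v-2\sigma$ and $v=0,-2\sigma$, and exhibits the needed cancellation against $\zeta(1-2\sigma)$ so that ultimately $\mathcal{S}_2\ll T^{-2\sigma\delta}$. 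Without this separate contour computation, your proof does not establish the claimed uniformity in $1/\log T\le\sigma\le 1$, which is exactly the uniformity that matters for the zero-density application where $\sigma$ is as small as $2/\log T$. You do flag this point as an ``obstacle'' to be ``balanced,'' but the assertion that it is dominated and absorbed mis-identifies the mechanism: the smallness of $\mathcal{S}_2$ comes from arithmetic cancellation inside $\Delta$, not from the size of the archimedean factor $T^{-6\sigma}$.

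A smaller remark: in the $\mathcal{S}_1$ evaluation you claim the double residue cancels the $(\log L)^{-2}$ prefactor to produce ``$1$ up to $O(1/\log T)$.'' As you then correctly note, the power saving $T^{-c\delta\sigma}$ has to come from the shifted-contour pieces, and the paper makes this quantitative by working in the explicit zero-free region $\Re(u)=-2\sigma-c/\log^{3/4}(2+|\Im u|)$ and the companion contours $\mathcal{C}_{11},\mathcal{C}_{12}$; an unquantified ``$O(1/\log T)$'' at the residue step is strictly weaker than the target when $\sigma$ is bounded away from $0$. The auxiliary statement at $s=3/2$ is handled in the paper only implicitly, and your sketch there is reasonable.
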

\bigskip

\noindent\textbf{Structure of this paper.} This paper is organized as follows.
In \S\S\,\ref{sec:GL(3) Hecke--Maass L-functions}--\ref{sec:argument principle}, we
review some essential facts and useful tools, including $\rm GL(3)$ Hecke--Maass cusp forms and their $L$-functions,
Kuznetsov trace formula and the argument principle.
In \S\,\ref{sec:twisted2ndmoment} and \S\,\ref{sec:mollified2ndmoment},
we provide the asymptotic formulas for the harmonic twisted
and mollified second moment of $L(s, \phi_j)$, respectively, i.e., the detailed proofs of Theorem \ref{MainThm}
and Proposition \ref{second moment}. In \S\,\ref{sec:Proof of main theorem}, we first give an application of Kuznetsov trace formula and
then present the detailed proof of Theorem \ref{zero-density estimate}.

\bigskip

\noindent{\bf Notation.}
Through out the paper, $\varepsilon$ is an arbitrarily small positive number
and $B$ is a sufficiently large positive number which may not be the same at each occurrence.

As usual, $e(z) = \exp (2 \pi i z) = e^{2 \pi i z}$.
The symbol $k\sim K$ means $K< k\le 2K$.
We write $f = O(g)$ or $f \ll g$ to mean $|f| \leq \mathcal{C}g$ for some unspecified positive constant $\mathcal{C}$.
We denote $f \asymp g$ to mean that $c_1 g\leq|f|\leq c_2g$ for some positive constants $c_1$ and $c_2$.
The sign $\overline d$ denotes the multiplicative inverse of $d\,(\bmod\,c)$.
\section{$\rm GL(3)$ Hecke--Maass $L$-functions}
\label{sec:GL(3) Hecke--Maass L-functions}
In this section, we review the $L$-functions associated with
$\rm GL(3)$ Hecke--Maass cusp forms and provide the corresponding approximate functional equations.
\subsection{Hecke--Maass cusp forms and their $L$-functions}\label{subsec:cusp}
Let $G=\rm GL(3,\mathbb{R})$ with maximal compact subgroup $K=\rm O(3,\mathbb{R})$ and center $Z\cong\mathbb{R}^\times$.
Let $\mathbb{H}_3=G/(K\cdot Z)$ be the generalized upper half-plane.
For $0\leq c\leq \infty$, let
\begin{equation*}
  \Lambda_{c} = \Big\{ \mu=(\mu_1,\mu_2,\mu_3)\in\mathbb{C}^3 \; \Big| \; \mu_1+\mu_2+\mu_3=0,\ |\Re(\mu_k)|\leq c,\ k=1,2,3 \Big\},
\end{equation*}
and
\begin{equation*}
  \Lambda_{c}' = \Big\{ \mu\in\Lambda_c \; \Big| \; \{-\mu_1,-\mu_2,-\mu_3\}=\{\overline{\mu_1},\overline{\mu_2},\overline{\mu_3}\} \Big\}.
\end{equation*}
Here $\mu$ is the Langlands parameter of a Hecke--Maass form $\phi$ in $L^2(SL(3,\mathbb{Z})\backslash\mathbb{H})$.
We will simultaneously use $\mu$ and spectral parameters $\nu=(\nu_1,\nu_2,\nu_3)$ coordinates, defined by
\begin{equation}\label{eqn:n2m}
  \nu_1=\frac{1}{3}(\mu_1-\mu_2),\quad
  \nu_2=\frac{1}{3}(\mu_2-\mu_3),\quad
  \nu_3=\frac{1}{3}(\mu_3-\mu_1).
\end{equation}
And we have
\begin{equation}\label{eqn:m2n}
  \mu_1=2\nu_1+\nu_2, \quad
  \mu_2=\nu_2-\nu_1, \quad
  \mu_3=-\nu_1-2\nu_2.
\end{equation}

Let $\phi$ be a Hecke--Maass cusp form for $\!\rm SL(3,\mathbb{Z})$ with Fourier coefficients
$A_{\phi}(m,n)\in\mathbb{C}$ for $(m,n)\in \mathbb{N}^2$.
We have the following Hecke relation (see Goldfeld \cite[Theorem 6.4.11]{Goldfeld})
\bea\label{Hecke relation}
A_\phi(m,1)A_\phi(1,n)=\sum_{d|(m,n)}A_\phi\left(\frac{m}{d},\frac{n}{d}\right).
\eea
The standard $L$-function of $\phi$ is given by
\[
  L(s,\phi):=\sum_{n=1}^{\infty} \frac{A_\phi(1,n)}{n^s}, \quad \textrm{for }\Re(s)>1.
\]
The dual form of $\phi$ is denoted by $\widetilde{\phi}$ with the Langlands parameter
$(-\mu_1,-\mu_2,-\mu_3)$ and the coefficients $A_{\widetilde{\phi}}(m,n)=\overline{A_\phi(m,n)}=A_\phi(n,m)$.
Then $L(s,\phi)$ admits an analytic continuation to all $s\in \mathbb{C}$ and satisfies
the functional equation
\begin{equation*}\label{eqn:FE}
\Lambda(s,\phi)=L(s,\phi)\prod_{j=1}^{3}\Gamma_{\mathbb{R}}(s-\mu_j)
=L(1-s,\widetilde{\phi})\prod_{j=1}^{3}\Gamma_{\mathbb{R}}(1-s+\mu_j)=\Lambda(1-s,\widetilde{\phi}),
\end{equation*}
where $\Gamma_{\mathbb{R}}(s) = \pi^{-s/2}\Gamma(s/2)$.

By \cite{BB}, the Langlands parameter of $\phi$ lies in $\Lambda_{5/14}'\subset \Lambda_{5/14}$,
and the non-exceptional parameters lie in $\Lambda_{0}'=\Lambda_{0}$.
\subsection{Stirling's formula}\label{subsec:Stirling}
By the Stirling asymptotic formula (see \cite[\S\,8.4, Eq. (4.03)]{O}),
for $|\arg(z)|\leq \pi-\varepsilon$, $|z|\gg 1$ and any $\varepsilon>0$,
\bna
\log \Gamma(z)=\left(z-\frac{1}{2}\right)\log z-z+\frac{1}{2}\log(2\pi)
+\sum_{j=1}^{K_1}\frac{B_{2j}}{2j(2j-1)z^{2j-1}}+O_{K_1, \varepsilon}\left(\frac{1}{|z|^{2K_1+1}}\right),
\ena
where $B_j$ is $j$--th Bernoulli number. Thus for $z=\sigma_1+it$, $\sigma_1$ fixed and $|t|\geq 2$,
\bna\label{Stirling approximation}
\Gamma(\sigma_1+it)=\sqrt{2\pi}(it)^{\sigma_1-1/2}e^{-\pi|t|/2}\left(\frac{|t|}{e}\right)^{it}
\left(1+\sum_{j=1}^{K_2}\frac{c_j}{t^j}+O_{\sigma_1,K_2,\varepsilon}
\bigg(\frac{1}{|t|^{K_2+1}}\bigg)\right),
\ena
where the constant $c_j$ depends on $j,\sigma_1$ and $\varepsilon$.
\subsection{The approximate functional equation}\label{subsec:AFE}
Let $G(u)=(\cos\frac{\pi u}{A})^{-100A}$, where $A$ is a positive integer.
Then $G(u)$ is even and holomorphic inside the strip $-\frac{A}{2}<\Re(u)<\frac{A}{2}$.
For $\sigma>0$ and $\tau\in \mathbb{R}$,
we will use the following approximate functional equation of $\big|L\big(1/2+\sigma+i\tau,\phi_j\big)\big|^2$.
\begin{lemma}\label{lemma:AFE2}
Let $\phi_j$ be a $\rm GL(3)$ Maass form with Langlands parameters
$$\mu_j=(\mu_{j,1}, \mu_{j,2}, \mu_{j,3}) \in \Lambda_{5/14}'.$$
For $s=1/2+\sigma+i\tau$ with $0<\sigma\leq 1/2$ and $\tau\in \mathbb{R}$, we have
\begin{equation*}\label{AFE}
\begin{split}
\left|L\left(s,\phi_j\right)\right|^2=&\sum_{r}\frac{1}{r^{1+2\sigma}}
\sum_{m,n}\frac{A_j(m,n)}{(mn)^{1/2+\sigma}}\left(\frac{m}{n}\right)^{i\tau}W_{\mu_{j}}\left(s, r^2mn\right)\\
&+\pi^{6\sigma}\sum_{r}\frac{1}{r^{1-2\sigma}}
\sum_{m,n}\frac{A_j(m,n)}{(mn)^{1/2-\sigma}}\left(\frac{m}{n}\right)^{i\tau}
\tilde{W}_{\mu_{j}}\left(s, r^2mn\right),
\end{split}
\end{equation*}
where
\begin{equation}\label{W_j(s,y)}
\begin{split}
W_{\mu_{j}}(s, y):=\frac{1}{2\pi i}\int_{(3)} (\pi^3y)^{-u}
\prod_{k=1}^{3} \frac{\Gamma\left(\frac{s+u-\mu_{j,k}}{2}\right)}
{\Gamma\left(\frac{s-\mu_{j,k}}{2}\right)}\frac{\Gamma\left(\frac{\overline{s}+u+\mu_{j,k}}{2}\right)}
{\Gamma\left(\frac{\overline{s}+\mu_{j,k}}{2}\right)}
G(u)\frac{\mathrm{d} u}{u},
\end{split}
\end{equation}
and
\begin{equation}\label{W_j(s,y)1}
\begin{split}
\tilde{W}_{\mu_{j}}(s, y):= \frac{1}{2\pi i}\int_{(3)} (\pi^3y)^{-u}
\prod_{k=1}^{3}\frac{\Gamma\left(\frac{1-s+u+\mu_{j,k}}{2}\right)}
{\Gamma\left(\frac{\overline{s}+\mu_{j,k}}{2}\right)}\frac{\Gamma\left(\frac{1-\overline{s}+u-\mu_{j,k}}{2}\right)}
{\Gamma\left(\frac{s-\mu_{j,k}}{2}\right)}
G(u)\frac{\mathrm{d} u}{u}.
\end{split}
\end{equation}
\end{lemma}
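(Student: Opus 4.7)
The plan is to prove Lemma~\ref{lemma:AFE2} by the classical Mellin/contour-shift argument applied to $|L(s,\phi_j)|^2 = L(s,\phi_j)\,L(\bar{s},\tilde{\phi_j})$, exploiting the functional equation $\Lambda(s,\phi_j)=\Lambda(1-s,\tilde{\phi_j})$ together with the Hecke relation \eqref{Hecke relation}.

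First I would introduce the contour integral
\[
I(s) := \frac{1}{2\pi i}\int_{(3)} L(s+u,\phi_j)\,L(\bar{s}+u,\tilde{\phi_j})\,\prod_{k=1}^{3}\frac{\Gamma_{\mathbb{R}}(s+u-\mu_{j,k})\,\Gamma_{\mathbb{R}}(\bar{s}+u+\mu_{j,k})}{\Gamma_{\mathbb{R}}(s-\mu_{j,k})\,\Gamma_{\mathbb{R}}(\bar{s}+\mu_{j,k})}\,G(u)\,\frac{du}{u}.
\]
On $\Re(u)=3$ both Dirichlet series converge absolutely; using $\overline{A_j(1,n)}=A_j(n,1)$ together with \eqref{Hecke relation}, and reindexing $m\mapsto rm$, $n\mapsto rn$, the product of $L$-series becomes $\sum_r r^{-(1+2\sigma+2u)}\sum_{m,n}A_j(m,n)(m/n)^{i\tau}/(mn)^{1/2+\sigma+u}$. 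Integrating term-by-term, the net $\pi$-contribution from the six $\Gamma_{\mathbb{R}}(z)=\pi^{-z/2}\Gamma(z/2)$ factors equals $\pi^{-3u}$, which absorbs into $(r^2mn)^{-u}$ to form $(\pi^3r^2mn)^{-u}$; combined with the surviving $\Gamma$-ratios this is exactly $W_{\mu_j}(s,r^2mn)$ from \eqref{W_j(s,y)}, producing the first sum of the AFE.

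Next I would shift the contour from $\Re(u)=3$ to $\Re(u)=-3$. The essential point is that although each individual $\Gamma_{\mathbb{R}}(s+u-\mu_{j,k})$ has simple poles in the strip, the completed factors $\Lambda(s+u,\phi_j)=L(s+u,\phi_j)\prod_k\Gamma_{\mathbb{R}}(s+u-\mu_{j,k})$ and $\Lambda(\bar{s}+u,\tilde{\phi_j})$ are entire, so the full integrand is meromorphic in $|\Re(u)|<A/2$ with only a simple pole at $u=0$ (for $A$ large enough that the poles of $G(u)$ at $u=\pm A/2,\pm 3A/2,\ldots$ stay outside the strip). The residue at $u=0$ equals $L(s,\phi_j)L(\bar{s},\tilde{\phi_j})G(0)=|L(s,\phi_j)|^2$, which is the left-hand side of the AFE.

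On the shifted line I change variables $u\mapsto -u$ (legitimate since $G$ is even), invoke the functional equation in the form $\Lambda(s-u,\phi_j)=\Lambda(1-s+u,\tilde{\phi_j})$ together with its conjugate $\Lambda(\bar{s}-u,\tilde{\phi_j})=\Lambda(1-\bar{s}+u,\phi_j)$, and re-expand $L(1-s+u,\tilde{\phi_j})L(1-\bar{s}+u,\phi_j)$ as a Dirichlet series via \eqref{Hecke relation}. A fresh tally of the $\pi$-powers across the six transformed $\Gamma_{\mathbb{R}}$'s now produces a net factor $\pi^{6\sigma-3u}$: the $\pi^{6\sigma}$ comes out as the prefactor of the dual sum, and the remaining $\pi^{-3u}$ is absorbed into $(\pi^3 y)^{-u}$ inside $\tilde W_{\mu_j}$ from \eqref{W_j(s,y)1}, yielding the second sum. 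The main obstacle is the careful sign and $\pi$-power bookkeeping in this last step—in particular, matching the sign from $G(-u)/(-u)=-G(u)/u$ against the orientation flip of the contour under $u\mapsto -u$, and regrouping the transformed gamma ratios so that the denominators read $\Gamma((\bar{s}+\mu_{j,k})/2)$ and $\Gamma((s-\mu_{j,k})/2)$ as required by \eqref{W_j(s,y)1}.
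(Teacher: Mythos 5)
Your argument is correct and is precisely the standard contour-shift proof that the paper invokes when it writes ``See e.g., Iwaniec--Kowalski \cite[\S\,5.2]{IK}''; the paper itself gives no details. You correctly set up the Mellin integral $I(s)$, reduce $L(s+u,\phi_j)L(\bar s+u,\tilde\phi_j)$ to $\sum_r r^{-(1+2\sigma+2u)}\sum_{m,n}A_j(m,n)(m/n)^{i\tau}(mn)^{-(1/2+\sigma+u)}$ via $A_{\tilde\phi_j}(1,m)=A_j(m,1)$ and the Hecke relation, track the $\pi^{-3u}$ (resp.\ $\pi^{6\sigma-3u}$) coming from the six $\Gamma_{\mathbb R}$ ratios on each side, identify the residue at $u=0$ as $|L(s,\phi_j)|^2$ (using $G(0)=1$), and observe that rewriting the numerator as $\Lambda(s+u,\phi_j)\Lambda(\bar s+u,\tilde\phi_j)$ over a $u$-independent denominator shows the integrand is holomorphic in the strip apart from $u=0$, provided $A$ is large. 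The only place where the write-up is a little loose is the parenthetical sign discussion at the end: under $u\mapsto -u$ the orientation reversal and the $du\mapsto -du$ cancel, so $\int_{(-3)}g(u)\,du=\int_{(3)}g(-w)\,dw$ with no extra sign; the surviving minus sign comes solely from the $1/(-u)$, and it is this minus sign that cancels against the minus sign in $|L|^2 = I(s)-\frac{1}{2\pi i}\int_{(-3)}(\cdots)$, producing the $+$ in front of the dual sum. The net conclusion you reach is nevertheless correct, so this is a presentational nit rather than a gap.
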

\begin{proof}
See e.g., Iwaniec--Kowalski \cite[\S\,5.2]{IK}.
\end{proof}
Note that the sum in \eqref{N(sigma,H)} is essentially supported on the generic forms
which satisfy
\bea\label{condition1}
  |\mu_{j,k}|\asymp|\nu_{j,k}|\asymp T, \quad 1\leq k \leq 3.
\eea
So we assume that $\phi_j$ also satisfies the above relation.
The functions $W_{\mu_{j}}(s, y)$ and $\tilde{W}_{\mu_{j}}(s, y)$ satisfy the following properties.
\begin{lemma}
Assume the condition \eqref{condition1}, and let $s=1/2+\sigma+i\tau$ with $0<\sigma\leq 1/2$ and $|\tau|<T^{1/4}$.
For any $\varepsilon>0$, we have
\begin{equation}\label{W bound}
\begin{split}
y^iW^{(i)}_{\mu_{j}}(s, y),\quad y^i\tilde{W}^{(i)}_{\mu_{j}}(s, y)\ll_{i,B} \left(1+\frac{y}{T^3}\right)^{-B},
\quad\text{for any integer}\,\,i\geq 0,
\end{split}
\end{equation}
\begin{equation}\label{W truncate}
\begin{split}
W_{\mu_{j}}(s, y)=\frac{1}{2\pi i}\int_{\varepsilon-i\log T}^{\varepsilon+i\log T} (\pi^3y)^{-u}
\prod_{k=1}^{3} \frac{\Gamma\left(\frac{s+u-\mu_{j,k}}{2}\right)}
{\Gamma\left(\frac{s-\mu_{j,k}}{2}\right)}\frac{\Gamma\left(\frac{\overline{s}+u+\mu_{j,k}}{2}\right)}
{\Gamma\left(\frac{\overline{s}+\mu_{j,k}}{2}\right)}
G(u)\frac{\mathrm{d} u}{u}+O_{\varepsilon,B}(y^\varepsilon T^{-B}),
\end{split}
\end{equation}
\begin{equation}\label{W truncate1}
\begin{split}
\tilde{W}_{\mu_{j}}(s, y)=\frac{1}{2\pi i}\int_{\varepsilon-i\log T}^{\varepsilon+i\log T}(\pi^3y)^{-u}
\prod_{k=1}^{3}\frac{\Gamma\left(\frac{1-s+u+\mu_{j,k}}{2}\right)}
{\Gamma\left(\frac{\overline{s}+\mu_{j,k}}{2}\right)}\frac{\Gamma\left(\frac{1-\overline{s}+u-\mu_{j,k}}{2}\right)}
{\Gamma\left(\frac{s-\mu_{j,k}}{2}\right)}
G(u)\frac{\mathrm{d} u}{u}+O_{\varepsilon,B}(y^\varepsilon T^{-B}),
\end{split}
\end{equation}
and for $y\ll T^3$,
\begin{equation}\label{W asymptotic formula}
\begin{split}
W_{\mu_{j}}(s, y)=1+O_{B}\left(\left(\frac{y}{T^3}\right)^{B}\right),
\end{split}
\end{equation}
\begin{equation}\label{W asymptotic formula1}
\begin{split}
\tilde{W}_{\mu_{j}}(s, y)=\prod_{k=1}^{3}\frac{\Gamma\left(\frac{1-s+\mu_{j,k}}{2}\right)}
{\Gamma\left(\frac{\overline{s}+\mu_{j,k}}{2}\right)}\frac{\Gamma\left(\frac{1-\overline{s}-\mu_{j,k}}{2}\right)}
{\Gamma\left(\frac{s-\mu_{j,k}}{2}\right)}
+O_{B}\left(\left(\frac{y}{T^3}\right)^{B}\right),
\end{split}
\end{equation}
provided for any positive integer $B<A/2$.
\end{lemma}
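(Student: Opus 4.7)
The plan is to prove all five assertions by standard contour-shifting arguments applied to the definitions \eqref{W_j(s,y)} and \eqref{W_j(s,y)1}. The analytic inputs are Stirling's approximation from \S\,\ref{subsec:Stirling} and the superexponential decay of $G(u)=(\cos(\pi u/A))^{-100A}$ along vertical lines inside $|\Re(u)|<A/2$. The key preliminary estimate I would record is that, on $\Re(u)=\sigma_u$ inside this strip, under \eqref{condition1} and $|\tau|<T^{1/4}$,
$$
\prod_{k=1}^{3}\frac{\Gamma\!\left(\tfrac{s+u-\mu_{j,k}}{2}\right)\Gamma\!\left(\tfrac{\bar s+u+\mu_{j,k}}{2}\right)}{\Gamma\!\left(\tfrac{s-\mu_{j,k}}{2}\right)\Gamma\!\left(\tfrac{\bar s+\mu_{j,k}}{2}\right)}\ll T^{3\sigma_u}(1+|\Im(u)|)^{O(1)},
$$
with the analogous bound for the dual integrand in \eqref{W_j(s,y)1}. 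This follows because $|\mu_{j,k}|\asymp T$ dominates $s$ and $u$ (the latter bounded by $\log T$ after truncation), so each ratio is essentially $((s-\mu_{j,k})/2)^{u/2}\asymp T^{u/2}$ by Stirling.

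For \eqref{W bound}, I would shift the contour in \eqref{W_j(s,y)} from $\Re(u)=3$ to $\Re(u)=B$ with $B<A/2$; no poles are crossed since the pole of $1/u$ lies to the left. The integrand is then majorised by $y^{-B}T^{3B}$ times $|G(u)|/|u|$ and a polynomial factor, which is absolutely integrable and yields $\ll(y/T^3)^{-B}$ when $y\ge T^3$. For $y\le T^3$, the trivial shift to $\Re(u)=\varepsilon$ already gives $\ll 1$, and the two ranges combine into $(1+y/T^3)^{-B}$. Derivatives are handled by differentiating under the integral sign: each $y\,\mathrm d/\mathrm dy$ introduces an extra factor $-u$, absorbed harmlessly by $G$. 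The same argument applies to $\tilde W_{\mu_j}$ using the dual Gamma ratio.

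For the truncation statements \eqref{W truncate} and \eqref{W truncate1}, I would first shift to $\Re(u)=\varepsilon$ (again no residues since $u=0$ is to the left), then discard the tail $|\Im(u)|>\log T$. By the Gamma-ratio bound and $|G(\varepsilon+it)|\ll\exp(-c|t|)$ with $c=c(A)>0$, this tail contributes $\ll y^{-\varepsilon}T^{3\varepsilon}e^{-c\log T}\ll y^{\varepsilon}T^{-B}$ for any $B$, upon taking $A$ sufficiently large. For the asymptotics \eqref{W asymptotic formula} and \eqref{W asymptotic formula1} in the range $y\ll T^3$, I would shift to $\Re(u)=-B$ with $B<A/2$ and collect the residue at the simple pole $u=0$. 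In \eqref{W_j(s,y)} the residue equals $1$ because every Gamma ratio degenerates to $1$ and $G(0)=1$; in \eqref{W_j(s,y)1} the residue is precisely the Gamma product exhibited in \eqref{W asymptotic formula1}. The leftover integral on $\Re(u)=-B$ is bounded by $y^B T^{-3B}=(y/T^3)^B$ exactly as in the proof of \eqref{W bound}.

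The main technical point to watch is that Stirling must apply uniformly to each Gamma factor along the shifted contours. Since $\mu_{j,k}$ is purely imaginary for non-exceptional forms (and at worst $|\Re(\mu_{j,k})|\le 5/14$ by \cite{BB}) while $s$ has bounded real part and $|u|\ll\log T$ on the truncated contours, every argument $(s\pm u\mp\mu_{j,k})/2$ has modulus $\gg T$ and lies in the required sector $|\arg|\le\pi-\varepsilon$. Verifying this uniformity is essentially the only non-routine step; everything else is bookkeeping on the contour shifts.
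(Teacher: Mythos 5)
Your overall strategy mirrors the paper's: shift right and use Stirling for \eqref{W bound}, shift to $\Re(u)=\varepsilon$ and use the decay of $G$ for \eqref{W truncate}--\eqref{W truncate1}, and shift left past $u=0$ for \eqref{W asymptotic formula}--\eqref{W asymptotic formula1}. Most of the bookkeeping (the Gamma-ratio estimate, absorbing the polynomial growth in $\Im(u)$ by $G$, the derivative claim, the two ranges $y\le T^3$ and $y>T^3$) is fine.

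There is, however, a genuine gap in your treatment of \eqref{W asymptotic formula} and \eqref{W asymptotic formula1}. You assert that shifting from $\Re(u)=3$ to $\Re(u)=-B$ ``collect[s] the residue at the simple pole $u=0$'' and that the only leftover is the integral on $\Re(u)=-B$. That is not true: the integrand in \eqref{W_j(s,y)} also has poles coming from $\Gamma\!\left(\frac{s+u-\mu_{j,k}}{2}\right)$ at $u=-s+\mu_{j,k}-2n$ and from $\Gamma\!\left(\frac{\bar{s}+u+\mu_{j,k}}{2}\right)$ at $u=-\bar{s}-\mu_{j,k}-2n$, $n\ge 0$ (and the analogous poles for \eqref{W_j(s,y)1}). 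Since $|\Re(\mu_{j,k})|\le 5/14$ and $s=1/2+\sigma+i\tau$, the first family has real part $\approx -1/2-\sigma+\Re(\mu_{j,k})-2n$, so already for $n=0$ it lies inside the strip $-B<\Re(u)<3$ whenever $B\ge 1$; the same holds for the second family. These poles \emph{are} crossed, and your leftover integral identity is therefore incomplete as stated. What saves the argument, and what the paper explicitly invokes, is that at all of these extra poles $|\Im(u)|\asymp T$ (because $|\Im(\mu_{j,k})|\asymp T$ under \eqref{condition1} and $|\tau|<T^{1/4}$), so $G(u)\ll e^{-cT}$ there and each such residue is superexponentially small. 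You need to say this; it is exactly the role of the hypothesis \eqref{condition1} in this part of the lemma, and omitting it leaves the proof of the two asymptotic formulas incorrect as written.

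A small secondary remark: your remark that the tail bound in \eqref{W truncate} improves ``upon taking $A$ sufficiently large'' is misleading. The decay $|G(\varepsilon+it)|\ll e^{-100\pi|t|}$ is essentially independent of $A$ once $|t|>A$; the size of $A$ is needed to license contour shifts up to $\Re(u)=\pm B$ with $B<A/2$, not to sharpen the tail estimate. This does not affect the correctness, only the attribution of where $A$ is used.
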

\begin{proof}
Shifting the line of integration to $\Re(u)=B$ and using Stirling's formula, we can get \eqref{W bound}.
For \eqref{W truncate}, we move the line of integration in \eqref{W_j(s,y)} to $\Re(u)=\varepsilon$ without passing any poles, and
bound the integral for $|\Im(u)|\leq\log T$ using Stirling's formula and the exponential decay of $G(u)$.
The main term in \eqref{W asymptotic formula} is the residue from the pole at $u=0$ when we shift the integral line to $\Re(u)=-B$, while
the residues at the other poles are exponentially small in view of \eqref{condition1}.
Similarly, we can get \eqref{W truncate1} and \eqref{W asymptotic formula1}, respectively.
\end{proof}

By Stirling's formula, if $|u|\ll |z|^{1/2}$, then
\bea\label{Stirling application 1}
\frac{\Gamma(z+u)}{\Gamma(z)}=z^u\left(1+\sum_{n=1}^{N_1}\frac{P_{n}(u)}{z^{n}}+O\Big(\frac{(1+|u|)^{2N_1+2}}{|z|^{N_1+1}}\Big)\right),
\eea
for certain polynomials $P_{n}(u)$ of degree $2n$.
For $\mu_j \in \Lambda_{1/2}'$ and $\tau<T^{1/4}$, we have
\begin{align}\label{W new form}
W_{\mu_{j}}(1/2+\sigma+i\tau,y)&=\frac{1}{2\pi i} \int_{(3)}(\pi^3y)^{-u} \prod_{k=1}^{3}
\prod_\pm \Big(\frac{1/2+\sigma\mp i\tau\pm\mu_{j,k}}{2}\Big)^{u/2} \nonumber\\
&\cdot \left(1+\sum_{n=1}^{N_1}\frac{2^nP_{n}(u)}{(1/2+\sigma\mp i\tau\pm\mu_{j,k})^{n}}
+O\Big(\frac{(1+|u|)^{2N_1+2}}{|\mu_{j,k}|^{N_1+1}}\Big)\right)
G(u)\frac{\dd u}{u},
\end{align}
and
\begin{align*}\label{W new form1}
\tilde{W}_{\mu_j}(1/2+\sigma+i\tau,y)
&=\frac{1}{2\pi i} \int_{(3)}(\pi^3y)^{-u} \prod_{k=1}^{3}
\prod_\pm \Big(\frac{1/2+\sigma\mp i\tau\pm\mu_{j,k}}{2}\Big)^{-\sigma+u/2} \nonumber\\
&\cdot \left(1+\sum_{n=1}^{N_2}\frac{2^nP_{n}(u)}{(1/2+\sigma\mp i\tau\pm\mu_{j,k})^{n}}
+O\Big(\frac{(1+|u|)^{2N_2+2}}{|\mu_{j,k}|^{N_2+1}}\Big)\right)
G(u)\frac{\dd u}{u},
\end{align*}
for a different collection of polynomials $P_n(u)$.
\\ \
\section{The Kuznetsov trace formula}
In this section, we state the Kuznetsov trace formula for the particular test function $h_{T,M}$
described in \S\,\ref{sec:intr} above.
\subsection{The minimal Eisenstein series and its Fourier coefficients}\label{subsec:minimal Eisenstein}
Let
$$
U_3=\left\{\begin{pmatrix} 1&*&* \\ 0&1&* \\ 0&0&1 \end{pmatrix}\right\} \cap \rm SL(3,\mathbb{Z}).
$$
For $z\in \mathbb{H}_3$ and $\Re(\nu_1),\Re(\nu_2)$ sufficiently large,
we define the minimal Eisenstein series
\[
E(z;\mu):=\sum_{\gamma\in U_3\backslash  \rm SL(3,\mathbb{Z})} I_{\nu_1,\nu_2}(\gamma z),
\]
where
\[
I_{\nu_1,\nu_2}(z):=y_1^{1+\nu_1+2\nu_2}y_2^{1+2\nu_1+\nu_2},
\]
and
\begin{equation}\label{z}
z=\begin{pmatrix} y_1y_2&y_1x_2&x_3 \\ 0&y_1&x_1 \\ 0&0&1 \end{pmatrix}
=\begin{pmatrix} 1&x_2&x_3 \\ 0&1&x_1 \\ 0&0&1 \end{pmatrix}
\begin{pmatrix} y_1y_2&0&0 \\ 0&y_1&0 \\ 0&0&1 \end{pmatrix},
\end{equation}
with $y_1,y_2>0$ and $x_1,x_2,x_3\in\mathbb{R}$.
$E(z;\mu)$ has Langlands parameter $\mu$ and admits a meromorphic continuation in $\nu_1$ and $\nu_2$.
The Fourier coefficients $A_{\mu}(m_1,m_2)$ are defined by
(see Goldfeld \cite[Theorem 10.8.6]{Goldfeld})
\[
A_{\mu}(m,1) = \sum_{d_1d_2d_3=m} d_1^{\mu_1}d_2^{\mu_2}d_3^{\mu_3},
\]
and satisfy the symmetry and Hecke relation (see Goldfeld \cite[Theorem 6.4.11]{Goldfeld})
\begin{equation}\label{min Eisenstein coefficient property}
\begin{split}
A_{\mu}(m,1)&=\overline{A_{\mu}(1,m)}, \\
A_{\mu}(m_1,m_2)&=\sum_{d|(m_1,m_2)} \mu(d) A_{\mu}(m_1/d,1)A_{\mu}(1,m_2/d).
\end{split}
\end{equation}
Hence we have
\begin{equation*}\label{eqn:Amin<<}
  |A_{\mu}(m_1,m_2)| \ll_{\varepsilon} (m_1m_2)^{\varepsilon},
  \quad \textrm{if}\
  \mu\in (i\mathbb{R})^3.
\end{equation*}
The $L$-function associated with $E(z;\mu)$ is given by
\begin{equation}\label{min Eisenstein L-function}
\begin{split}
L(s, E(z;\mu))=\sum_{m\geq 1}\frac{A_{\mu}(1,m)}{m^s}=\zeta(s+\mu_1)\zeta(s+\mu_2)\zeta(s+\mu_3),
\end{split}
\end{equation}
for $\Re(s)>1$.
In order to state the Kuznetsov trace formula in the \S\,\ref{subsec:Kuznetsov trace formula},
we introduce the normalized factor for the minimal Eisenstein series is
$$
\mathcal{N}_\mu=\frac{1}{16}\prod_{k=1}^3|\zeta(1+3\nu_{k})|^2
$$
corresponding to the minimal Eisenstein series $E(z;\mu)$,
where $$\mu=(\mu_1,\mu_2,\mu_3).$$
Recall that (see \cite[Eq. (3.11.8)]{Titchmarsh}) we have
$$
\frac{1}{\zeta(1+it)}\ll \log (1+|t|),
$$
which implies that
\begin{align*}\label{N min}
\frac{1}{\mathcal{N}_\mu}\ll \prod_{k=1}^3\log^2(1+|\nu_k|).
\end{align*}
\subsection{The maximal Eisenstein series and its Fourier coefficients}
\label{subsec:maximal Eisenstein}
Let
$$
P_{2,1}=\left\{\begin{pmatrix} *&*&* \\ *&*&* \\ 0&0&* \end{pmatrix}\right\} \cap \rm SL(3,\mathbb{Z}).
$$
Let $\mu\in\mathbb{C}$ have sufficiently large real part,
and let $g: \rm SL(2,\mathbb{Z})\backslash\mathbb{H}_2\rightarrow \mathbb{C}$
be a Hecke--Maass cusp form with $\|g\|=1$,
Langlands parameter $\mu_g\in i\mathbb{R}$
and Hecke eigenvalue $\lambda_g(m)$.
The maximal Eisenstein series twisted by a Maass form $g$ is defined by
\[
  E(z;\mu;g) := \sum_{\gamma \in P_{2,1}\backslash \rm SL(3,\mathbb{Z})}\det(\gamma z)^{1/2+\mu} g(\mathfrak{m}_{P_{2,1}}(\gamma z)),
\]
where $z$ is defined as in \eqref{z}, and
\[
\mathfrak{m}_{P_{2,1}}: \mathbb{H}_3 \rightarrow \mathbb{H}_2,\quad
\left(\begin{matrix}
y_1y_2 & y_1x_2 & x_3 \\
& y_1 & x_1 \\
&   & 1
\end{matrix}\right) \mapsto
\left(\begin{matrix}
y_2 & x_2  \\
&   1
\end{matrix}\right)
\]
is the restriction to the upper left corner.
It has a meromorphic continuation in $\mu$.
The Fourier coefficients are determined by
\[
  B_{\mu,g}(m,1) = \sum_{d_1d_2=m}\lambda_g(d_1)d_1^{\mu}d_2^{-2\mu},
\]
and satisfy the symmetry and Hecke relation as \eqref{min Eisenstein coefficient property} above
(see Goldfeld \cite[Proposition 10.9.3 and Theorem 6.4.11]{Goldfeld})
\begin{equation}\label{max Eisenstein coefficient property}
\begin{split}
B_{\mu,g}(m,1)&=\overline{B_{\mu,g}(1,m)}, \\
B_{\mu,g}(m_1,m_2)&=\sum_{d|(m_1,m_2)} \mu(d) B_{\mu,g}(m_1/d,1)B_{\mu,g}(1,m_2/d).
\end{split}
\end{equation}
Recall that the current best known estimate towards the Ramanujan--Petersson conjecture for Fourier coefficients of $\rm GL(2)$ Maass cusp forms
is due to Kim--Sarnak \cite[Appendix 2]{Kim},
\bea\label{vartheta}
|\lambda_g(n)|\ll_{\varepsilon} n^{\vartheta+\varepsilon}, \quad n\in\mathbb{N},
\eea
here $\vartheta=7/64$. The Ramanujan--Petersson conjecture predicts $\vartheta=0$.
Hence we have
\begin{equation*}\label{eqn:Amax<<}
  |B_{\mu,g}(m_1,m_2)| \ll_\varepsilon (m_1m_2)^{\vartheta+\varepsilon},
  \quad \textrm{if}\
  \mu\in i\mathbb{R}.
\end{equation*}
The $L$-function associated with $E(z;\mu;g)$ is given by
\begin{equation}\label{max Eisenstein L-function}
\begin{split}
L(s, E(z;\mu;g))=\sum_{m\geq 1}\frac{B_{\mu,g}(1,m)}{m^s}=\zeta(s-2\mu)L(s+\mu, g),
\end{split}
\end{equation}
for sufficiently large $\Re(s)$.
Note that $L(s,g)$ is the Hecke--Maass $L$-function associated with $g$ with real Fourier coefficients $\lambda_g(n)$.
The complete $L$-function is defined by
\begin{equation}
\begin{split}
\Lambda(s, E(z;\mu;g))=\prod_{k=1}^3\Gamma_{\mathbb{R}}(s+\mu_k')L(s, E(z;\mu;g))=\Lambda(1-s, E(z;-\mu;g)),
\end{split}
\end{equation}
where
\begin{equation}
\begin{split}\label{definition mu'}
\mu_1'=\mu+\mu_g, \quad \mu_2'=\mu-\mu_g, \quad\mu_3'=-2\mu.
\end{split}
\end{equation}
We also introduce the normalizing factor
$$
\mathcal{N}_{\mu,g}=8L(1,\textup{Ad}^2g)|L(1+3\mu,g)|^2,
$$
where $L(s,\textup{Ad}^2g)$ is the adjoint square $L$-function of $g$.
And we have the lower bounds
$$
L(1,\textup{Ad}^2g)\gg (1+|\mu_g|)^{-\varepsilon},  \quad
L(1+it,g)\gg (1+|t|+|\mu_g|)^{-\varepsilon},
$$
which follow from \cite{HL, HR, JS} and \cite{GLS}.
Therefore, for $\mu\in i\mathbb{R}$, it follows that
\begin{align}\label{N max}
  \frac{1}{\mathcal{N}_{\mu,g}}\ll (1+|\mu|+|\mu_g|)^{\varepsilon}.
\end{align}

\subsection{The Kloosterman sums}
\label{subsec:Kloosterman sum}
For $n_1$, $n_2$, $m_1$, $m_2$, $D_1$, $D_2\in\mathbb{N}$, we need the relevant Kloosterman sums
\begin{align*}
  \tilde{S}(n_1,n_2,m_1;D_1,D_2) :=\sum_{\substack{C_1\,(\bmod D_1),\, C_2\,(\bmod D_2)\\ (C_1,D_1)=(C_2,D_2/D_1)=1}}
  e\left(n_2\frac{\bar{C_1}C_2}{D_1}+m_1\frac{\bar{C_2}}{D_2/D_1}+n_1\frac{C_1}{D_1}\right)
\end{align*}
for $D_1|D_2$, and
\begin{align*}
  &S(n_1,m_2,m_1,n_2;D_1,D_2)
  \\
  &:=\sum_{\substack{B_1,C_1\,(\bmod D_1)\\ B_2,C_2\,(\bmod D_2)
  \\ D_1C_2+B_1B_2+D_2C_1\equiv0\,(\bmod D_1D_2)\\ (B_j,C_j,D_j)=1}}
  e\left(\frac{n_1B_1+m_1(Y_1D_2-Z_1B_2)}{D_1}+\frac{m_2B_2+n_2(Y_2D_1-Z_2B_1)}{D_2}\right),
\end{align*}
where $Y_jB_j+Z_jC_j\equiv1\,(\bmod D_j)$ for $j=1,2$.
We have the standard (Weil-type) bounds
\begin{align}\label{Larsen bound}
\tilde{S}(n_1,n_2,m_1;D_1,D_2)\ll ((m_1, D_2/D_1)D_1^2, (n_1,n_2,D_1)D_2)(D_1D_2)^\varepsilon,
\end{align}
and
\begin{align*}\label{Weil' type bound}
S(n_1,m_2,m_1,n_2;D_1,D_2)\ll (D_1D_2)^{1/2+\varepsilon}((D_1, D_2)(m_1n_1,[D_1,D_2])(m_2n_2,[D_1,D_2]))^{1/2+\varepsilon}.
\end{align*}
The first bound is due to Larsen \cite[Appendix]{BFG}, and the second is due to Stevens \cite[Theorem 2]{Buttcane1}.

\subsection{The Kuznetsov trace formula}\label{subsec:Kuznetsov trace formula}

Define the spectral measure on the hyperplane $\mu_1+\mu_2+\mu_3=0$ by
\[
\mathrm{d}_{\text{spec}}\mu=\text{spec}(\mu)\mathrm{d}\mu,
\]
where
$$
\text{spec}(\mu):= \prod_{k=1}^{3}\left(3\nu_k\tan\left(\frac{3\pi}{2}\nu_k\right)\right)\quad\mbox{and}\quad
\mathrm{d}\mu=\mathrm{d}\mu_1\mathrm{d}\mu_2=\mathrm{d}\mu_2\mathrm{d}\mu_3=\mathrm{d}\mu_1\mathrm{d}\mu_3.
$$
Following \cite[Theorems 2 \& 3]{Buttcane},
we define the following integral kernels in terms of
Mellin--Barnes representations.
For $s \in \mathbb{C}$, $\mu \in \Lambda_{\infty}$, define the meromorphic function
\bea\label{tilde G definition}
\tilde{G}^{\pm}(s,\mu):=\frac{\pi^{-3s}}{12288\pi^{7/2}}
\Biggl(\prod_{k=1}^3\frac{\Gamma(\frac{1}{2}(s-\mu_k))}
{\Gamma(\frac{1}{2}(1-s+\mu_k))} \pm i
\prod_{k=1}^3\frac{\Gamma(\frac{1}{2}(1+s-\mu_k))}{\Gamma(\frac{1}{2}(2-s+\mu_k))}\Biggr),
\eea
and for $s=(s_1,s_2)\in \mathbb{C}^2$, $\mu\in\Lambda_{\infty}$, define the meromorphic function
$$
G(s,\mu):=\frac{1}{\Gamma(s_1+s_2)}\prod_{k=1}^3\Gamma(s_1-\mu_k)\Gamma(s_2+\mu_k).
$$
The latter is essentially the double Mellin transform
of the $\rm GL(3)$ Whittaker function.
We also define the following trigonometric functions
\begin{displaymath}
\begin{split}
&S^{++}(s,\mu):=\frac{1}{24\pi^2}\prod_{k=1}^3 \cos\left(\frac{3}{2} \pi \nu_k\right),\\
&S^{+-}(s,\mu):=-\frac{1}{32\pi^2}\frac{\cos(\frac{3}{2}\pi\nu_2)\sin(\pi(s_1-\mu_1))
\sin(\pi(s_2+\mu_2))\sin(\pi(s_2+\mu_3))}{\sin(\frac{3}{2}\pi\nu_1)\sin(\frac{3}{2}\pi\nu_3)\sin(\pi(s_1+s_2))},\\
&S^{-+}(s,\mu):=-\frac{1}{32\pi^2}\frac{\cos(\frac{3}{2}\pi\nu_1)\sin(\pi(s_1-\mu_1))
\sin(\pi(s_1-\mu_2))\sin(\pi(s_2+\mu_3))}{\sin(\frac{3}{2}\pi\nu_2)\sin(\frac{3}{2}\pi\nu_3)\sin(\pi(s_1+s_2))}, \\
&S^{--}(s,\mu):=\frac{1}{32\pi^2}\frac{\cos(\frac{3}{2}\pi\nu_3)\sin(\pi(s_1-\mu_2))
\sin(\pi(s_2+\mu_2))}{\sin(\frac{3}{2}\pi\nu_2)\sin(\frac{3}{2}\pi\nu_1)}.
\end{split}
\end{displaymath}
For  $y \in \Bbb{R} \setminus \{0\}$ with
$\sgn(y)=\epsilon$, 
let
\bea\label{K_w_4 definition}
K_{w_4}(y;\mu):=\int_{-i\infty}^{i\infty} | y|^{-u} \tilde{G}^{\epsilon}(u, \mu) \frac{\mathrm{d}u}{2\pi i}.
\eea
Here we quote the result of \cite[Lemma 4]{BB}.
\begin{lemma}\label{K_w_4}
For $y\in \mathbb{R}\backslash\{0\}$ and $\mu\in \Lambda_0$, we have
\begin{align}\label{K definition}
K_{w_4}(y;\mu)=&\,\frac{1}{3072\pi^5}\int_0^\infty J^{-}_{\mu_1-\mu_2}(2\sqrt{u})\left(\frac{\pi^3|y|}{u^{3/2}}\right)^{-\mu_3}
\exp\left(-\frac{2i\pi^3y}{u}\right)\frac{\mathrm{d}u}{u}\nonumber\\
&+\frac{1}{3072\pi^5}\int_0^\infty \tilde{K}_{\mu_1-\mu_2}(2\sqrt{u})\left(\frac{\pi^3|y|}{u^{3/2}}\right)^{-\mu_3}
\exp\left(\frac{2i\pi^3y}{u}\right)\frac{\mathrm{d}u}{u},
\end{align}
where
\begin{align*}
\tilde{K}_{it}(x)&=\int_{-\infty}^\infty\cos(2x\sinh v)\exp(itv)\mathrm{d}v,\\
J^{-}_{it}(x)&=\int_{-\infty}^\infty\cos(2x\cosh v)\exp(itv)\mathrm{d}v,
\end{align*}
for $t\in \mathbb{R}$, $x>0$.
\end{lemma}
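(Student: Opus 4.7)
The plan is to unfold the Mellin--Barnes representation \eqref{K_w_4 definition} by inserting the explicit form \eqref{tilde G definition} of $\tilde{G}^{\epsilon}(u,\mu)$, and then to exploit the constraint $\mu_1+\mu_2+\mu_3=0$ to split the triple product of gamma ratios into one factor carrying only the single variable $\mu_1-\mu_2$ and another carrying only $\mu_3$. The first factor should appear as the Mellin transform (in an auxiliary $u$-variable) of a Bessel-type kernel indexed by $\mu_1-\mu_2$, while the second factor should produce the power $(\pi^3|y|/u^{3/2})^{-\mu_3}$ together with an oscillatory exponential $\exp(\pm 2i\pi^3 y/u)$ after a substitution of the form $u\mapsto \pi^3|y|/u^{3/2}$; the $3/2$-exponent is dictated by the three-variable nature of the $\mathrm{GL}(3)$ Mellin--Barnes integral, which after isolating one gamma ratio leaves two others to be Mellin-inverted.

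The concrete tools are the Mellin pairs
\bna
\int_0^\infty J^{-}_{it}(2\sqrt{u})\,u^{s-1}\,\dd u \qquad\text{and}\qquad \int_0^\infty \tilde{K}_{it}(2\sqrt{u})\,u^{s-1}\,\dd u,
\ena
which, by inserting the integral definitions of $J^{-}_{it}$ and $\tilde{K}_{it}$ given in the statement, interchanging integrals (justified by absolute convergence after a small horizontal shift of the contour, then by analytic continuation in $\mu$), and evaluating the inner integral via the substitutions $w=\cosh v$ or $w=\sinh v$, reduce to explicit products of two gamma factors of the form $\Gamma_{\mathbb{R}}(s\pm it/2)$. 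Matching these products against the $k=1,2$ gamma-ratios in the integrand of $K_{w_4}(y;\mu)$ completes the identification of the Bessel part. The $\pm$ inside $\tilde{G}^{\epsilon}$ controls whether the resulting kernel is the oscillatory $J^{-}_{\mu_1-\mu_2}$ or the decaying $\tilde{K}_{\mu_1-\mu_2}$, and it pairs, via the shift $s\mapsto s+1$ that distinguishes the two terms of \eqref{tilde G definition}, with the corresponding sign of the exponent in $\exp(\pm 2i\pi^3 y/u)$. After combining contributions from both terms of $\tilde{G}^{\epsilon}$, the two summands of \eqref{K definition} emerge, and the dependence on $\epsilon=\sgn(y)$ is absorbed by the symmetries of $J^{-}$ and $\tilde{K}$.

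The main obstacle is careful bookkeeping on two fronts. First, the contour of the original integral sits on $\Re(u)=0$, while the Mellin inversion formulas for the Bessel kernels require a strip determined by $\Im(\mu_k)$; one must verify that the shift encounters no pole of the numerator gammas, which for $\mu\in\Lambda_0$ reduces to checking that the poles at $u=\mu_k-2n$ stay on one side. Second, the precise constant $1/(3072\pi^5)$ has to be tracked through the duplication and reflection identities used in extracting the Bessel Mellin pair, and through the Jacobian of $u\mapsto \pi^3|y|/u^{3/2}$; the factor $12288/3072=4$ together with $\pi^{7/2}\to\pi^5$ means that a $4\pi^{3/2}$ factor is produced by these identities, a routine but unforgiving check. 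Everything else—the validity of interchanging integrals, the analytic continuation in $\mu$, and the elimination of the $\epsilon$-dependence—is a direct consequence of the absolute-convergence bounds one gets from Stirling.
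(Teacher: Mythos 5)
The paper does not prove Lemma~\ref{K_w_4}. The sentence immediately preceding the lemma reads ``Here we quote the result of [BB, Lemma 4],'' and no argument is given; the statement is imported verbatim from Blomer--Buttcane. There is therefore no in-paper proof to compare your sketch against.

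Treated as an independent derivation, your outline has the right general flavor (factor the Mellin--Barnes integrand using $\mu_1+\mu_2+\mu_3=0$ and match the pieces against known Mellin transforms of the kernels $J^{-}_{it}$ and $\tilde K_{it}$), but it rests on a pairing that cannot be correct. You assert that ``the $\pm$ inside $\tilde G^{\epsilon}$ controls whether the resulting kernel is the oscillatory $J^{-}_{\mu_1-\mu_2}$ or the decaying $\tilde K_{\mu_1-\mu_2}$,'' i.e.\ that the two summands of $\tilde G^{\epsilon}$ in \eqref{tilde G definition} correspond one-to-one with the two summands of \eqref{K definition}. But the superscript $\epsilon$ in \eqref{K_w_4 definition} is $\sgn(y)$, a fixed sign once $y$ is fixed, and the $\pm i$ between the two products of gamma ratios in \eqref{tilde G definition} is exactly that same $\epsilon$; for a given $y$, $\tilde G^{\epsilon}$ is a single function, yet \emph{both} the $J^{-}$-term and the $\tilde K$-term of \eqref{K definition} must come out of it. In fact, after the reflection and duplication identities the two summands of $\tilde G^{\epsilon}$ reduce to $(2\pi)^{-3s}\prod_k\Gamma(s-\mu_k)\prod_k\cos\bigl(\tfrac{\pi}{2}(s-\mu_k)\bigr)$ and $\epsilon\, i\,(2\pi)^{-3s}\prod_k\Gamma(s-\mu_k)\prod_k\sin\bigl(\tfrac{\pi}{2}(s-\mu_k)\bigr)$, and the separation into $J^{-}$ and $\tilde K$ pieces only appears after expanding these triple trigonometric products into complex exponentials and recognizing the $\mu_3$-dependent gamma factor as the Mellin transform in a \emph{new} variable $u$ of $u^{3\mu_3/2-1}e^{\mp 2i\pi^3|y|/u}$ -- not via a ``substitution $u\mapsto\pi^3|y|/u^{3/2}$,'' which is not a change of variable one can actually perform on the one-dimensional contour integral in $u$ (and would not be invertible even if one tried). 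Carried out literally as sketched, your derivation would misassign the two Bessel kernels and would not reproduce \eqref{K definition}.
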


\begin{remark}
As in \cite[\S\,5]{BB}, the integrals in \eqref{K definition} are not absolutely convergent at $0$, but
since $\exp(\pm 2i\pi^3y/u)$ is highly oscillating in a neighborhood of $u=0$, they exist in a Riemann sense,
and the portion $0<u<1$ can be made absolutely convergent after partial integration.
\end{remark}
\begin{remark}
For the Bessel functions, we have the uniform bounds (see \cite[Eq. (4.13)]{BB})
\begin{align}\label{J- bound}
\frac{\partial^k}{\partial x^k}\tilde{K}_{it}(x),\quad
\frac{\partial^k}{\partial x^k}J^{-}_{it}(x)\ll_k \left(1+\frac{|t|}{x}\right)^k,
\end{align}
for $|t|$, $x\geq 1$ and non-negative integer $k$.
\end{remark}

For $y = (y_1, y_2) \in (\mathbb{R}\setminus \{0\})^2$ with
$\sgn(y_1) = \epsilon_1$, $\sgn(y_2) = \epsilon_2$, let
\begin{displaymath}
\begin{split}
K^{\epsilon_1,\epsilon_2}_{w_6}(y; \mu)
:=&\int_{-i\infty}^{i\infty}\int_{-i\infty}^{i\infty}|4\pi^2 y_1|^{-s_1}|4\pi^2 y_2|^{-s_2}
G(s,\mu)S^{\epsilon_1,\epsilon_2}(s, \mu)\frac{\mathrm{d} s_1\mathrm{d}s_2}{(2\pi i)^2}.
\end{split}
\end{displaymath}
We generally follow the Barnes integral convention that
the contour should pass to the right of all of the poles
of the Gamma functions in the form $\Gamma(s_j + a)$
and to the left of all of the poles of the Gamma functions
\footnote{Such Gamma functions may occur through
the functional equation
$ (\sin(\pi(s_1 + s_2))\Gamma(s_1 + s_2) )^{-1}
=  \frac{1}{\pi} \Gamma(1 - s_1 - s_2)$.}
in the form $\Gamma(a - s_j)$.
Moreover, we choose the contour such that all integrals
are absolutely convergent, which can always be arranged
by shifting the unbounded part appropriately.

Now we can state the Kuznetsov trace formula in the version of
Buttcane \cite[Theorems 2--4]{Buttcane}.

\begin{lemma}\label{lemma: KTF}
Let $n_1$, $n_2$, $m_1$, $m_2 \in \mathbb{N}$ and
let $h$ be a function  that is holomorphic on
$\Lambda_{1/2 + \delta}$ for some $\delta > 0$,
symmetric under the Weyl group $\mathcal{W}$, of rapid decay when
$|\Im(\mu_j)| \rightarrow \infty$,  and satisfies
\begin{equation*}\label{eqn: zeros}
h(3\nu_j \pm 1)  = 0, \quad j = 1, 2, 3.
\end{equation*}
Then we have
\begin{displaymath}
\begin{split}
&\mathcal{C} + \mathcal{E}_{\min} + \mathcal{E}_{\max}
=\Delta+\Sigma_4+\Sigma_5+\Sigma_6,
\end{split}
\end{displaymath}
where
\begin{equation*}
\begin{split}
\mathcal{C}&:=\sum_{j} \frac{h(\mu_{j})}{\mathcal{N}_j} \overline{A_{j}(m_1, m_2)} A_{j}(n_1, n_2), \\
\mathcal{E}_{\min} &:=\frac{1}{24(2\pi i)^2} \iint_{\Re(\mu)=0} \frac{h(\mu)}{\mathcal{N}_{\mu}}
\overline{A_{\mu}(m_1, m_2)} A_{\mu}(n_1, n_2)\mathrm{d}\mu_1\mathrm{d}\mu_2,\\
\mathcal{E}_{\max} &:=\sum_{g} \frac{1}{2\pi i} \int_{\Re(\mu)=0}
\frac{h(\mu+\mu_g,\mu-\mu_g,-2\mu)}{\mathcal{N}_{\mu,g}}
\overline{B_{\mu,g}(m_1, m_2)} B_{\mu,g}(n_1, n_2)\mathrm{d}\mu,\\
\end{split}
\end{equation*}
and
\begin{equation*}
\begin{split}
\Delta&:=\delta_{n_1, m_1} \delta_{n_2, m_2}  \frac{1}{192\pi^5} \int_{\Re(\mu)= 0} h(\mu) \mathrm{d}_{\rm{spec}}\mu,\\
\Sigma_4&:=\sum_{\epsilon=\pm 1} \sum_{\substack{D_2 \mid D_1\\  m_2 D_1=n_1 D_2^2}}
\frac{ \tilde{S}(-\epsilon n_2, m_2, m_1; D_2, D_1)}{D_1D_2}
\Phi_{w_4}\!\left(\frac{\epsilon m_1m_2n_2}{D_1 D_2} \right),  \\
\Sigma_5&:=\sum_{\epsilon=\pm 1} \sum_{\substack{D_1 \mid D_2\\ m_1 D_2 = n_2 D_1^2}}
\frac{ \tilde{S}(\epsilon n_1, m_1, m_2; D_1, D_2) }{D_1D_2}\Phi_{w_5}\!\left( \frac{\epsilon n_1m_1m_2}{D_1 D_2}\right),\\
\Sigma_6&:=\sum_{\epsilon_1,\epsilon_2=\pm 1} \sum_{D_1,D_2}
\frac{S(\epsilon_2n_2,\epsilon_1n_1,m_1,m_2;D_1, D_2)}{D_1D_2}\Phi_{w_6}\!
\left(-\frac{\epsilon_2m_1n_2D_2}{D_1^2},-\frac{\epsilon_1 m_2n_1D_1}{ D_2^2}\right),
\end{split}
\end{equation*}
with
\begin{equation}\label{defPhi}
\begin{split}
& \Phi_{w_4}(y):=\int_{\Re(\mu)=0} h(\mu) K_{w_4}(y;\mu)\mathrm{d}_{\rm{spec}}\mu,\\
& \Phi_{w_5}(y):=\int_{\Re(\mu)=0} h(\mu) K_{w_4}(-y;-\mu)\mathrm{d}_{\rm{spec}}\mu,\\
& \Phi_{w_6}(y_1,y_2):=\int_{\Re(\mu)=0}h(\mu) K^{\sgn(y_1),\sgn(y_2)}_{w_6}((y_1, y_2);\mu)\mathrm{d}_{\rm{spec}}\mu.
\end{split}
\end{equation}
\end{lemma}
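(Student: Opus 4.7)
The plan is to quote the Kuznetsov trace formula in the form proved by Buttcane \cite[Theorems 2--4]{Buttcane}, so the task reduces to tracing through the derivation at a conceptual level and checking that our hypotheses on $h$ match his. I will not reprove the Mellin--Barnes identifications for the kernels $K_{w_4}$ and $K^{\epsilon_1,\epsilon_2}_{w_6}$; those are the substance of his paper. Instead I will describe the classical two-way evaluation that produces the identity.

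The starting point is the $L^2$ inner product $\langle P_{m_1,m_2}, P_{n_1,n_2}\rangle$ on $\mathrm{SL}(3,\mathbb{Z})\backslash \mathbb{H}_3$, where each $P_{n_1,n_2}(z)$ is a $\mathrm{GL}(3)$ Poincar\'e series whose seed is a Whittaker-type function built from the test function $h$ via a Lebedev-type transform. On one hand, expanding by Parseval against the full spectral basis --- cusp forms $\phi_j$, the minimal Eisenstein series $E(z;\mu)$, and the maximal Eisenstein series $E(z;\mu;g)$ --- produces exactly the three spectral terms $\mathcal{C}$, $\mathcal{E}_{\min}$, $\mathcal{E}_{\max}$, with the denominators $\mathcal{N}_j$, $\mathcal{N}_\mu$, $\mathcal{N}_{\mu,g}$ appearing as the inverse Gram factors of the basis vectors and with $\overline{A(m_1,m_2)}A(n_1,n_2)$ coming from the Fourier expansions of the two Poincar\'e seeds.

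On the other hand, one unfolds one of the Poincar\'e series and applies the Bruhat decomposition $\mathrm{SL}(3,\mathbb{Z})=\bigsqcup_{w\in\mathcal{W}} U_w(\mathbb{Z})\,w\,U(\mathbb{Z})$, splitting the resulting sum into six Weyl cells. The identity cell contributes the diagonal term $\Delta$, enforced by the matching conditions $n_1=m_1,\ n_2=m_2$ on the unipotent characters. The two intermediate cells $w_2,w_3$ are the delicate ones: their contributions, which involve only Kloosterman-type sums from $\mathrm{GL}(2)$ embedded in $\mathrm{GL}(3)$, are annihilated by the Weyl-symmetry assumption on $h$ together with the vanishing condition $h(3\nu_j\pm 1)=0$; this is the mechanism (made explicit by Stevens and Buttcane) that ensures no $w_2,w_3$ terms appear on the right. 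The remaining cells $w_4,w_5,w_6$ yield respectively the sums $\Sigma_4,\Sigma_5,\Sigma_6$, with $\tilde{S}$ and $S$ arising as the orbit sums of the characters along the unipotent radical, and with the integral transforms $\Phi_{w_j}$ in \eqref{defPhi} arising from the resulting oscillatory integrals over the positive-diagonal torus.

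The main technical obstacle --- the content of \cite{Buttcane} rather than something I would redo here --- is to evaluate these oscillatory integrals as Mellin--Barnes transforms of $h$ against the kernels $\tilde{G}^{\pm}(s,\mu)$ and $G(s,\mu)S^{\epsilon_1,\epsilon_2}(s,\mu)$. This requires the double Mellin transform of the $\mathrm{GL}(3)$ long-element Whittaker function, careful bookkeeping of the four sign combinations $(\epsilon_1,\epsilon_2)$ in the $w_6$ cell, and a contour analysis that places the $s$-integrals to the right of the poles $\Gamma(s_j-\mu_k)$ and to the left of the poles $\Gamma(a-s_j)$, which is exactly the Barnes convention spelled out after the definition of $K^{\epsilon_1,\epsilon_2}_{w_6}$. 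The holomorphy of $h$ on $\Lambda_{1/2+\delta}$ and its rapid decay along vertical strips are what permit all these contour shifts and guarantee absolute convergence of every piece; with those hypotheses verified for the test functions applied later in \S\,\ref{sec:twisted2ndmoment}--\S\,\ref{sec:Proof of main theorem}, the lemma follows directly from Buttcane's theorems.
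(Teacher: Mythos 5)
The paper gives no proof of this lemma at all: it is introduced with ``Now we can state the Kuznetsov trace formula in the version of Buttcane \cite[Theorems 2--4]{Buttcane}'' and then simply stated. Your approach of quoting Buttcane and checking that the hypotheses on $h$ match is therefore exactly what the paper does, and your conceptual sketch of the Poincar\'e-series-and-Bruhat derivation is a useful supplement that the paper omits.

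One part of your sketch misattributes a mechanism, and it is worth correcting. You write that the $w_2,w_3$ cells are ``annihilated by the Weyl-symmetry assumption on $h$ together with the vanishing condition $h(3\nu_j\pm 1)=0$.'' That is not what happens. For non-degenerate characters (i.e.\ when $m_1,m_2,n_1,n_2\geq 1$, as assumed), the Bruhat cells attached to the two simple reflections $w_2,w_3$ contribute nothing for a purely arithmetic reason: the compatibility conditions of Bump--Friedberg--Goldfeld \cite{BFG} force the corresponding Kloosterman sums to be empty. This is a statement about the unipotent characters and the Bruhat cells, independent of the test function. The hypothesis $h(3\nu_j\pm 1)=0$ plays a different role in Buttcane's argument: together with holomorphy on $\Lambda_{1/2+\delta}$, it guarantees that the various contour shifts used in inverting the spherical transform and in analyzing the Mellin--Barnes kernels $\tilde{G}^{\pm}$ and $G\cdot S^{\epsilon_1,\epsilon_2}$ do not pick up residue terms at $3\nu_j=\pm 1$ (the locations tied to the residual spectrum and to the poles of the archimedean kernels). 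It is a regularity condition on the transforms $\Phi_{w_4},\Phi_{w_5},\Phi_{w_6}$, not the reason $w_2,w_3$ are absent. As long as you keep these two mechanisms distinct, the rest of your account is accurate and the lemma does reduce to a citation of \cite[Theorems 2--4]{Buttcane}.
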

The function $h_{T,M}(\mu)$ localizes at a ball of radius $M$ about $w(\mu_0)$ for each $w\in\mathcal{W}$.
We have
\begin{equation}\label{property1 for h}
\mathscr{D}_k h_{T,M}(\mu)\ll_k M^{-k},
\end{equation}
for any differential operator $\mathscr{D}_k$ of order $k$,
which we use frequently when we integrate by parts,
and sufficiently many differentiations can save arbitrarily many powers of $T$.
Moreover, by trivial estimate, we have
\begin{equation}\label{property2 for h}
  \int_{\Re(\mu)=0}h_{T,M}(\mu)\mathrm{d}_{\rm{spec}}\mu\asymp T^3M^2.
\end{equation}

\subsection{The weight functions}\label{subsec:weight functions}
For the weight functions, we will need the following results
in Blomer--Buttcane \cite[Lemma 1, Lemmas 8 \& 9]{BB}.
\begin{lemma}\label{truncate 1st}
For some large enough constant $B>0$, we have
\[
\Phi_{w_4}(y) \ll |y|^{1/10}T^B,\quad
\Phi_{w_5}(y) \ll |y|^{1/10}T^B,\quad
\Phi_{w_6}(y_1,y_2) \ll |y_1y_2|^{3/5}T^B.
\]
\end{lemma}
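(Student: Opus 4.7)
The plan is to shift the Mellin--Barnes contours in the kernels $K_{w_4}(y;\mu)$ and $K^{\epsilon_1,\epsilon_2}_{w_6}(y;\mu)$ into the region of slightly negative real part, picking up the desired powers of $|y|$ from the factors $|y|^{-s}$ and then bounding the remaining integrals trivially by a polynomial in $T$. This follows the template of Blomer--Buttcane \cite[\S\,5]{BB}.

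For $\Phi_{w_4}$, I would substitute the Mellin--Barnes representation \eqref{K_w_4 definition} into \eqref{defPhi} and interchange the order of integration. The key step is to move the $u$-contour from its initial position $\Re(u) = c > 0$ to $\Re(u) = -1/10$, on which $|y|^{-u}$ contributes the desired factor $|y|^{1/10}$. Poles of the Gamma factors $\Gamma(\tfrac{1}{2}(u-\mu_k))$ appearing in $\tilde{G}^{\pm}(u,\mu)$ lie on the imaginary $u$-axis since $\Re(\mu_k)=0$; one makes a small detour around them, and any residual contributions are still of size $O(|y|^{1/10}T^B)$ because the $P(\mu)^2$ factor in $h_{T,M}$ vanishes on the relevant exceptional loci. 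By Stirling's formula (\S\,\ref{subsec:Stirling}), each ratio $\Gamma(\tfrac{1}{2}(u-\mu_k))/\Gamma(\tfrac{1}{2}(1-u+\mu_k))$ has size $(1+|\Im u - \Im \mu_k|)^{-3/5}$ on the new contour, which both guarantees absolute convergence of the $u$-integral and produces at worst a polynomial-in-$T$ bound after integrating out $\Im u$. The outer spectral integral is bounded using $h_{T,M}(\mu) \ll 1$ on its support, whose spectral measure is $\ll T^B$. This yields $\Phi_{w_4}(y) \ll |y|^{1/10}T^B$, and $\Phi_{w_5}$ is handled identically via the substitution $(y,\mu) \mapsto (-y,-\mu)$.

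For $\Phi_{w_6}$, I would carry out an analogous shift in two variables, moving each of the $s_j$-contours to $\Re(s_j) = -3/5$, which yields the factor $|y_1 y_2|^{3/5}$ from $|4\pi^2 y_1|^{-s_1}|4\pi^2 y_2|^{-s_2}$. The additional subtlety is the trigonometric kernel $S^{\epsilon_1,\epsilon_2}(s,\mu)$: the denominators $\sin(\pi(s_1+s_2))$ and $\sin(\tfrac{3}{2}\pi\nu_k)$ produce potential poles both as the contours cross integer values of $s_1+s_2$ and at the spectral locus. Using Stirling on $G(s,\mu)$ together with the standard bounds on the sines for $|\Im s_j|$ large, the integrand decays rapidly enough to make the shifted double integral absolutely convergent; any residues at crossed poles are killed, up to acceptable error, by the vanishing of $P(\mu)^2$ at $3\nu_k = \pm(2n+1)$. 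Bounding the remaining double integral trivially and then integrating against $h_{T,M}$ over $\Re(\mu)=0$ gives the required $|y_1 y_2|^{3/5}T^B$.

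The main obstacle is the careful bookkeeping of the residues produced by the contour shifts, particularly in the $w_6$ case where the double shift interacts non-trivially with the sine denominators of $S^{\epsilon_1,\epsilon_2}$. Once the residues are shown to respect the stated bounds (which is ultimately a consequence of the zeros built into $h_{T,M}$ through the polynomial $P(\mu)^2$), the remainder of the argument is purely Stirling asymptotics combined with the fact that a polynomial bound in $T$ with arbitrary exponent $B$ suffices; no fine cancellation is required.
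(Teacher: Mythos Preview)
Your overall strategy---shift the Mellin--Barnes contours to $\Re(u)=-1/10$ (resp.\ $\Re(s_j)=-3/5$), extract the power of $|y|$, and bound the rest by Stirling and the compact support of $h_{T,M}$---is exactly the argument of Blomer--Buttcane \cite[Lemma~1]{BB}, which is all the paper invokes here. So the approach is correct.

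One step is not quite right as written, however. A ``small detour'' around the pole at $u=\mu_k$ leaves a piece of the contour near $\Re(u)=0$, on which $|y|^{-u}$ contributes $O(1)$ rather than $O(|y|^{1/10})$; so the detour alone does not give the stated bound for small $|y|$. Nor do the zeros of $P(\mu)^2$ help directly: they sit at the \emph{real} points $3\nu_k=\pm(1+2n)$, which are unrelated to the $u$-poles $u=\mu_k$ on the imaginary axis. The actual mechanism in \cite{BB} is a two-step contour shift. First shift the $u$-contour to $\Re(u)=-1/10$, genuinely crossing the poles at $u=\mu_k$ and picking up residues carrying factors $|y|^{-\mu_k}$. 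Each such residue is then a holomorphic function of $\mu$; using the holomorphy of $h_{T,M}$ on $\Lambda_{1/2+\delta}$, deform the $\mu$-contour so that $\Re(\mu_k)\leq -1/10$ for that particular index $k$ (this is compatible with $\sum\mu_j=0$ and $|\Re\mu_j|\leq 1/2$), which converts $|y|^{-\mu_k}$ into the required $|y|^{1/10}$. It is during this second $\mu$-shift that the zeros of $P(\mu)^2$ and of $\mathrm{spec}(\mu)$ become relevant, cancelling any poles of the Gamma-residue in the $\mu$-variable. The $w_6$ case follows the same two-step pattern in $(s_1,s_2)$ and then $\mu$.
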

\begin{remark}\label{remark 5}
As in \cite[\S\,3.6]{BB}, Lemma \ref{truncate 1st} shows more quantitatively that with our particular choice of $h$ we can
truncate the $D_1,\, D_2$-sums at $D_1,\,D_2\ll T^B$ for some sufficiently large $B$ at the cost of an error term $O(T^{-1000})$,
provided that $n_1,\,n_2,\,m_1,\,m_2\ll T^{10}$, say.
\end{remark}
\begin{lemma}\label{truncate 2nd}
(1) If $0<|y|\leq T^{3-\varepsilon}$, then for any constant $B>0$, we have
\[
\Phi_{w_4}(y) \ll_{\varepsilon,B} T^{-B}.
\]\\
(2) If $T^{3-\varepsilon}<|y|$, then
\[
|y|^{k}\Phi_{w_4}^{(k)}(y)\ll_{k,\varepsilon} T^{3+2\varepsilon}M^2(T+|y|^{1/3})^{k}.
\]
\end{lemma}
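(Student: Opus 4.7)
My plan is to expand $K_{w_4}(y;\mu)$ using the explicit formula from Lemma \ref{K_w_4} and to write the Bessel functions $J^-_{it}$ and $\tilde K_{it}$ via the integral representations given there, so that $\Phi_{w_4}(y)$ becomes a multiple oscillatory integral against the smooth localizing factor $h_{T,M}$. Parameterizing $\mu=it$ with $t\in\mathbb{R}^3$ on $t_1+t_2+t_3=0$, the combined phase (ignoring amplitude factors) takes the form
\[
\Phi(t,u,v) \;=\; t_1 v + \tfrac{3}{2}t_3\log u - t_3\log(\pi^3|y|) \mp 2\pi^3 y/u \mp c\sqrt u\,\beta(v),
\]
with $\beta(v)\in\{\cosh v,\sinh v\}$ and $c>0$ an inessential constant.

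For part (1), I would first locate the joint critical points of $\Phi$ in $(u,v)$: stationarity in $v$ forces $t_1=\pm c\sqrt u\,\beta'(v)$, and stationarity in $u$ balances $2\pi^3 y/u^2$ against $3t_3/(2u)\mp c\beta(v)/(2\sqrt u)$. Because $h_{T,M}$ confines $|t_k|\asymp T$, elimination of $(u,v)$ from these two equations pins $|y|\asymp T^3$. Hence when $|y|\leq T^{3-\varepsilon}$ the phase is nonstationary throughout the amplitude's support, and repeated integration by parts in the spectral variable $t$ (after writing $t=t_0+M\eta$ to exploit the $M$-scale smoothness of $h_{T,M}$) each step produces a factor $T^\varepsilon$ or better, yielding $\Phi_{w_4}(y)\ll T^{-B}$ for any $B>0$.

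For part (2), with $|y|>T^{3-\varepsilon}$, I would carry out stationary phase in $(u,v)$ at the critical point $u_*\asymp|y|/(T+|y|^{1/3})$: when $|y|\asymp T^3$ this matches the classical balance $u_*\sim|y|/T$ driven by the $-2\pi^3 y/u$ versus $(3t_3/2)\log u$ competition, while for $|y|\gg T^3$ the Bessel phase $\mp c\sqrt u\,\beta(v)$ dominates and $u_*\asymp|y|^{2/3}$. Gaussian integration in $(u,v)$ produces an $O(1)$ amplitude after incorporating the $(t_3)^2$ factor arising from $\mathrm{d}_{\rm{spec}}\mu$, and the trivial estimate on the remaining $t$-integral over a ball of volume $M^2$, together with the $T^3$ from the spectral density, gives the claimed $T^{3+2\varepsilon}M^2$. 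For the $k$-th derivative bound, each $\partial_y$ either hits $\exp(\mp 2i\pi^3 y/u)$ producing $1/u\sim(T+|y|^{1/3})/|y|$ at the critical point, or hits the $|y|^{-it_3}$ amplitude producing the smaller $|t_3|/|y|\sim T/|y|$; so $k$ differentiations contribute $\bigl((T+|y|^{1/3})/|y|\bigr)^k$, yielding the stated bound upon multiplying by $|y|^k$.

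The main obstacle is executing the two-variable stationary phase precisely across the transition regime $|y|\asymp T^3$, where the two summands in $T+|y|^{1/3}$ are of comparable size and the dominant phase contribution switches from the $y/u$-term to the Bessel-function term. One must check that the Hessian in $(u,v)$ remains nondegenerate with the correct scaling, so that the stationary-phase approximation holds with error terms dominated by the main term; and the non-stationarity argument of part (1) requires honest tracking of the $t$-gradient of $\Phi$, where the specific shape of $h_{T,M}$ (and in particular the factor $P(\mu)^2$ suppressing contributions near the Weyl chamber walls) plays an essential role. Once this is in hand, the remainder is routine amplitude bookkeeping.
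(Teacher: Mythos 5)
The paper does not prove this statement itself: Lemma~\ref{truncate 2nd} is quoted verbatim from Blomer--Buttcane~\cite[Lemma~8]{BB}, so there is no ``paper's own proof'' to compare against; the only guidance the present article gives is the brief sketch in \S\,\ref{sec:twisted2ndmoment}, which says that after an integration by parts in $\rho_1=\Im(\mu_1+\mu_2)$ (using the $M$-scale smoothness \eqref{property1 for h} of $h_{T,M}$) one localizes $u\asymp|y|^{2/3}$ in \eqref{K definition}, and that each subsequent $y$-derivative then produces a factor $T|y|^{-1}+u^{-1}\asymp(T+|y|^{1/3})|y|^{-1}$ via the Bessel-function bound \eqref{J- bound}. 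That is a \emph{sequential} integration-by-parts-then-trivial-estimate argument, not a genuine stationary phase. Your plan --- joint stationary phase in $(u,v)$ followed by trivial estimation of the residual $\mu$-integral --- aims at the same critical-point geometry (you correctly identify $u_*\asymp|y|^{2/3}$ and the per-derivative factor $(T+|y|^{1/3})/|y|$), so the two routes are genuinely different in organization, with yours being the more ambitious one: it would also yield asymptotics, whereas the upper bounds of the lemma only need the cruder route.

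There are, however, a few real gaps. First, the Bessel index in \eqref{K definition} is $\mu_1-\mu_2$, so the phase term should be $\rho_2 v=(t_1-t_2)v$, not $t_1v$; correspondingly the $v$-stationarity reads $4\sqrt{u}\,\beta'(v)=\mp\rho_2$, not $\mp t_1$. Second, and more seriously, the $u$-integral in \eqref{K definition} is only conditionally convergent at $u=0$ (see the remark following Lemma~\ref{K_w_4}); a joint stationary-phase argument in $(u,v)$ cannot be run directly on an integral that is not absolutely convergent, so one must first perform exactly the $\rho_1$-integration by parts you are trying to replace, in order to regularize and truncate $u\asymp|y|^{2/3}$ before any Gaussian approximation can be applied. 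Third, the assertion that ``Gaussian integration in $(u,v)$ produces an $O(1)$ amplitude after incorporating the $(t_3)^2$ factor'' is the crux of the whole calculation and is left unverified --- the spectral density $\mathrm{d}_{\rm{spec}}\mu$ contributes a factor $\asymp T^3$ (not $(t_3)^2$) and the amplitude $1/u_*$ together with $(\det\mathrm{Hess})^{-1/2}$ must be shown to combine to $O(T^\varepsilon)$ uniformly through the transition $|y|\asymp T^3$, which you yourself flag as the ``main obstacle.'' Finally, for part~(1), showing that there is no joint critical point in the support of $h_{T,M}$ does not by itself justify ``integration by parts in $t$'': you need a lower bound on a specific directional derivative, and the clean way to get it (as in [BB]) is to first use $\partial_{t_3}\Phi\asymp\log(|y|/u^{3/2})$ to pin $u\asymp|y|^{2/3}\ll T^{2-2\varepsilon/3}$, and then observe that the $v$-phase $4\sqrt{u}\beta(v)+\rho_2 v$ has no critical point (for $\beta=\sinh$ because $4\sqrt{u}<|\rho_2|$; for $\beta=\cosh$ because its critical point sits at $|v|\gg1$, which is excluded by the $\rho_2$-integration by parts). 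In short, your approach can be made to work, but it strictly contains the sequential argument of [BB] as a necessary preliminary step, and then asks for more.
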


\begin{lemma}\label{truncate 3rd}
Let $\Upsilon:=\min\{|y_1|^{1/3}|y_2|^{1/6},|y_1|^{1/6}|y_2|^{1/3}\}$.\\
(1) If $\Upsilon\ll T^{1-\varepsilon}$, then we have
\[ \Phi_{w_6}(y_1,y_2) \ll T^{-B}. \]\\
(2) If $\Upsilon\gg T^{1-\varepsilon}$, then we have
\begin{equation*}
\begin{split}
|y_1|^{k_1}|y_2|^{k_2}&\frac{\partial^{k_1}}{\partial y_1^{k_1}}\frac{\partial^{k_2}}{\partial y_2^{k_2}}
\Phi_{w_6}(y_1, y_2)\\
&\ll_{k_1, k_2,\varepsilon} T^{3+2\varepsilon}M^2\big(T+|y_1|^{1/2}+|y_1|^{1/3}|y_2|^{1/6}\big)^{k_1}
\big(T+|y_2|^{1/2}+|y_2|^{1/3}|y_1|^{1/6}\big)^{k_2}.
\end{split}
\end{equation*}
\end{lemma}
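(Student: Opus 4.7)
The plan is to start from the Mellin--Barnes representation
\[
K^{\epsilon_1,\epsilon_2}_{w_6}(y;\mu)=\iint |4\pi^2 y_1|^{-s_1}|4\pi^2 y_2|^{-s_2}G(s,\mu)S^{\epsilon_1,\epsilon_2}(s,\mu)\frac{\mathrm{d}s_1\,\mathrm{d}s_2}{(2\pi i)^2},
\]
substitute it into the definition \eqref{defPhi} of $\Phi_{w_6}$, and exchange the $\mu$- and $s$-integrations so that the inner object becomes
\[
\mathcal{I}(s):=\int_{\Re(\mu)=0} h_{T,M}(\mu)\,G(s,\mu)\,S^{\epsilon_1,\epsilon_2}(s,\mu)\,\mathrm{d}_{\rm{spec}}\mu.
\]
Applying Stirling's formula to the six Gamma factors in $G(s,\mu)$, and re-expressing the sines in $S^{\epsilon_1,\epsilon_2}$ via the reflection formula, turns $\mathcal{I}(s)$ into an oscillatory integral in $\mu$ whose phase is essentially $\sum_{k=1}^3\bigl[(s_1-\mu_k)\log(s_1-\mu_k)+(s_2+\mu_k)\log(s_2+\mu_k)\bigr]$ on the support of $h_{T,M}$, which is localized at each Weyl translate $w(\mu_0)$ at scale $M$.

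For part (1), when $\Upsilon\ll T^{1-\varepsilon}$, I would shift the $s$-contours to lines $\Re(s_j)=\sigma_j$ on which the gradient in $\mu$ of the total phase stays much larger than $M^{-1}$ throughout the support of $h_{T,M}$: the saddle equations for $\mu_k$ coming from $G(s,\mu)$ would force $|\mu|$ comparable to some multiple of $\Upsilon$, whereas $|\mu|\asymp T$ on the support of $h_{T,M}$, so no saddle is active. Repeated integration by parts in $\mu$, exploiting $\mathscr{D}_k h_{T,M}\ll M^{-k}$ from \eqref{property1 for h}, then produces arbitrary savings of the form $T^{-B}$.

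For part (2), with $\Upsilon\gg T^{1-\varepsilon}$, I would apply multi-dimensional stationary phase in the two independent $\mu$-variables (after imposing $\mu_1+\mu_2+\mu_3=0$), which preserves the natural measure factor $T^3M^2$ from \eqref{property2 for h}. Each differentiation $y_j\partial_{y_j}$ brings down a factor $-s_j$ inside the $s$-integral, so $y_j^{k_j}\partial_{y_j}^{k_j}\Phi_{w_6}$ is controlled by the maximum effective $|s_j|$ on the relevant contour. Balancing $|y_j|^{-s_j}$ against the Gamma growth in $G(s,\mu)$ reveals three competing regimes---$|s_j|\asymp T$ driven by the scale of $\mu$, $|s_j|\asymp |y_j|^{1/2}$ from the $w_4$-type saddle analogous to Lemma \ref{truncate 2nd}, and $|s_j|\asymp |y_j|^{1/3}|y_{3-j}|^{1/6}$ from the coupled three-factor saddle---precisely the three terms in the stated bound.

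The main obstacle is handling the two-dimensional $s$-integral and the (effectively) two-dimensional $\mu$-integral simultaneously. The trigonometric factors in $S^{\epsilon_1,\epsilon_2}(s,\mu)$ carry poles along the natural contours that must be avoided when shifting, and the optimal choice of contour depends delicately on which of the three saddle regimes is active. Identifying the dominant saddle as $(|y_1|,|y_2|)$ varies, and patching the three regimes across their boundaries into a single uniform estimate, is what ultimately produces the asymmetric $1/3,1/6$ exponents and constitutes the most delicate part of the analysis.
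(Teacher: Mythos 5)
You should note first that the paper does not give its own proof of this lemma: it is quoted verbatim from Blomer--Buttcane \cite[Lemma 9]{BB}. That proof does not carry out stationary phase on the Mellin--Barnes form of $K_{w_6}$ directly. Exactly as in the $w_4$-case --- via the Bessel-integral representation of Lemma \ref{K_w_4}, reused in this paper around \eqref{rho-integral} --- one first converts $K_{w_6}$ into an explicit double oscillatory integral of classical Bessel functions, and only then integrates by parts in the spectral variables using \eqref{property1 for h} and differentiates in $(y_1,y_2)$ with bounds in the spirit of \eqref{J- bound}. That conversion is not cosmetic: it is what unwinds the exponential cancellation between the Gamma factors of $G(s,\mu)$ and the trigonometric factors $S^{\epsilon_1,\epsilon_2}(s,\mu)$ on $\Re(\mu)=0$, and it is what makes the scales $|y_1|^{1/3}|y_2|^{1/6}$ and $|y_1|^{1/6}|y_2|^{1/3}$ emerge cleanly (the $w_4$-analogue being $u\asymp|y|^{2/3}$).

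Your Mellin--Barnes plan is a genuinely different route and, as written, leaves exactly the hard steps open. Concretely: (i) the interchange of the $s$- and $\mu$-integrations and the subsequent contour shifts are delicate because for $(\epsilon_1,\epsilon_2)\neq(+,+)$ the factor $S^{\epsilon_1,\epsilon_2}(s,\mu)$ carries $s$-dependent sines and a $\sin(\pi(s_1+s_2))^{-1}$; none of this is addressed. (ii) Applying Stirling to $G(s,\mu)$ on $\Re(\mu)=0$ produces exponential amplitude factors of size $e^{\pm\pi|\cdot|/2}$ in addition to the phase you record, and you never show these cancel against the sines in $S^{\epsilon_1,\epsilon_2}$ so as to leave an oscillatory integral with only polynomially large amplitude on the contours you intend to use --- without that, the phase $\sum_k\bigl[(s_1-\mu_k)\log(s_1-\mu_k)+(s_2+\mu_k)\log(s_2+\mu_k)\bigr]$ does not by itself control $\mathcal{I}(s)$. (iii) The claim that the saddle forces $|\mu|\asymp\Upsilon$ for the specific combination $\Upsilon=\min\{|y_1|^{1/3}|y_2|^{1/6},|y_1|^{1/6}|y_2|^{1/3}\}$, and that the three regimes in part (2) patch together into the stated uniform bound, is asserted rather than derived. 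You flag these as ``the main obstacle,'' which is accurate; they are the content of the lemma, not incidental technicalities, and they are precisely what the passage to the explicit Bessel representation in \cite{BB} resolves. As it stands, the proposal is a plausible roadmap but not a proof.
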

\begin{remark}
In the proof of Lemmas \ref{truncate 1st}--\ref{truncate 3rd},
the only two properties of $h_{T,M}(\mu)$ used are \eqref{property1 for h} and \eqref{property2 for h}.
Here, we remark that $h_i(\mu)$ also satisfies these two inequalities
by using properties of $W_{\mu,N_1}$ or $\tilde{W}_{\mu,N_2}$  (see \S\,\ref{sec:twisted2ndmoment} below).
Thus, $\Phi_{i,w_4}(y)$, $\Phi_{i,w_5}(y)$ and $\Phi_{i,w_6}(y)$ defined as in \eqref{defPhi} by using
the test function $h_i(\mu)$, also satisfy the corresponding bounds
in Lemmas \ref{truncate 1st}--\ref{truncate 3rd},
since the difference that we use \eqref{property1 for h} and \eqref{property2 for h}
instead of \cite[(3.7) and (3.8)]{BB} has no influence.
\end{remark}
Moreover, we also need the following lemma, which gives some strong bounds on the $\Phi_w$ functions.
\begin{lemma}\label{truncate 4th}

(1) For $y_1, y_2\in (T^{-100}, T^{100})$, $\epsilon_1, \epsilon_2\in \{\pm 1\}$, we have
$\Phi_{w_6}(\epsilon_1y_1, \epsilon_2y_2) \ll T^{3+\varepsilon}.$
\vspace{1em}

\noindent(2) For $y\in (T^{-100}, T^{100})$, $\epsilon\in \{\pm 1\}$, we have
$\Phi_{w_4}(\epsilon y) \ll T^{3+\varepsilon}.$
\end{lemma}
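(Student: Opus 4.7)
The plan is to bound $\Phi_{w_4}$ and $\Phi_{w_6}$ via a pointwise estimate on the kernels $K_{w_4}(y;\mu)$ and $K_{w_6}^{\epsilon_1,\epsilon_2}((y_1,y_2);\mu)$ for $\mu$ in the support of $h_{T,M}$, combined with the total-mass bound \eqref{property2 for h}. The extra factor of $M^2$ appearing in Lemmas \ref{truncate 2nd}(2) and \ref{truncate 3rd}(2) comes from trivially bounding the $\mu$-integral, and the goal of this lemma is to avoid that loss by exploiting the smoothness and explicit structure of the kernels in the bounded range $y\in(T^{-100},T^{100})$.

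For part (2), Lemma \ref{truncate 2nd}(1) already yields $\Phi_{w_4}(\epsilon y)\ll T^{-B}$ for $|y|\leq T^{3-\varepsilon}$, so only the range $|y|\in(T^{3-\varepsilon}, T^{100})$ needs fresh work. I would start from the Mellin--Barnes representation \eqref{K_w_4 definition} and shift the $u$-contour to $\Re(u)=0$; the factor $P(\mu)^2$ in $h_{T,M}$ cancels any poles of $\tilde{G}^{\epsilon}(u,\mu)$ that are crossed. Stirling's formula applied to the Gamma ratios in $\tilde{G}^{\epsilon}$ yields an asymptotic of the shape $\tilde{G}^{\epsilon}(u,\mu)=T^{-3/2}e^{i\Psi(u,\mu)}(1+O(T^{-1+\varepsilon}))$ for $|\mu_k|\asymp T$ and $|\Im(u)|\ll T^{1+\varepsilon}$, with super-polynomial decay beyond, where the phase $\Psi$ has $u$-derivative of size $\asymp\log T$ and second derivative $\asymp T^{-1}$. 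A stationary-phase analysis of the $u$-integral, combined with the oscillation from $|y|^{-u}$, then yields the pointwise bound $|K_{w_4}(\epsilon y;\mu)|\ll T^{-1+\varepsilon}$ uniformly on the relevant range. Integrating against $h_{T,M}$ via \eqref{property2 for h} gives $|\Phi_{w_4}(\epsilon y)|\ll T^{-1+\varepsilon}\cdot T^3 M^2$, which fits within $T^{3+\varepsilon}$ once $M=T^\theta$ is taken with $\theta$ small enough, matching the regime of Theorem \ref{zero-density estimate}.

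Part (1) proceeds analogously, starting from the double Mellin--Barnes representation of $K_{w_6}^{\epsilon_1,\epsilon_2}$. I would shift both $s_1$ and $s_2$ to the imaginary axis, apply Stirling to the Gamma ratios in $G(s,\mu)$, and handle the trigonometric factors $S^{\epsilon_1,\epsilon_2}(s,\mu)$ by keeping the contours bounded away from their zeros, which is permitted because $h_{T,M}$ is supported away from the corresponding spectral lines via the factor $P(\mu)^2$. A two-dimensional stationary-phase analysis in $(s_1,s_2)$ then produces a pointwise bound on $K_{w_6}^{\epsilon_1,\epsilon_2}((y_1,y_2);\mu)$ strong enough that, once integrated against $h_{T,M}$, it yields $\ll T^{3+\varepsilon}$.

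The main obstacle will be carrying out these stationary-phase analyses with enough precision: one must locate the critical points of the phases of the Mellin--Barnes integrands and verify non-degeneracy with the claimed sizes throughout the support of $h_{T,M}$, and in the $w_6$ case the interaction between the Gamma-function phase and the trigonometric factors $S^{\epsilon_1,\epsilon_2}$ needs careful handling near their zero sets. When the phase turns out to be non-stationary on portions of the contour, repeated integration by parts using the smoothness bound \eqref{property1 for h} provides even stronger savings, so the worst case is precisely the stationary regime handled above.
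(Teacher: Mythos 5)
Your proposed strategy --- a pointwise bound on $K_{w_4}(y;\mu)$ (resp.\ $K_{w_6}$) followed by a trivial integration against $h_{T,M}$ via \eqref{property2 for h} --- cannot produce the stated bound, and the gap is quantitative. Even granting the optimistic estimate $|K_{w_4}(y;\mu)|\ll T^{-1+\varepsilon}$ from your $u$-stationary-phase analysis, trivially integrating against $|h_{T,M}|$ gives $\Phi_{w_4}\ll T^{-1+\varepsilon}\cdot T^3M^2 = T^{2+2\theta+\varepsilon}$, which exceeds $T^{3+\varepsilon}$ as soon as $\theta>1/2$. The lemma is stated (and is needed in Theorem \ref{application 1}) for the full range $0<\theta<1$, so your escape clause ``once $M=T^\theta$ is taken with $\theta$ small enough'' is not available; moreover, to reach $T^{3+\varepsilon}$ by this route you would need $\sup_\mu|K_{w_4}(y;\mu)|\ll T^{-2\theta+\varepsilon}$, which for $\theta$ near $1$ is far stronger than what a one-variable stationary-phase estimate on the Mellin--Barnes integral can give. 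The missing ingredient is cancellation in the $\mu$-integral itself, which your proposal relegates to ``non-stationary portions'' handled by IBP but does not account for in the stationary regime you call the worst case.

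The paper's proof (following Buttcane--Zhou, Proposition 4.1) is structured precisely to capture this $\mu$-cancellation. It passes from the Mellin--Barnes picture to the explicit physical-space representation of \cite[Lemma 4.2]{BZ}: $\Phi_{w_4}$ and $\Phi_{w_6}$ are written as $T^3$ times a two-dimensional integral of the transform $\check{h}_{T,M}(v_1,v_2)$ against a bounded, mildly singular (logarithmic) Bessel kernel. The point is that repeated integration by parts in $\check{h}_{T,M}$ --- precisely where the $\mu$-cancellation enters --- localizes $(v_1,v_2)$ to a region of measure $\asymp M^{-2}$, while $\check{h}_{T,M}\ll M^2$ pointwise, so the $M$-dependence cancels exactly and one lands on $T^{3+\varepsilon}$ uniformly in $\theta$. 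This is qualitatively different from a pointwise kernel bound; without some version of this transform (or a genuine two-variable stationary-phase analysis in $(u,\mu)$ jointly, which your write-up does not carry out), the proof as you sketch it would not close.
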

\begin{proof}
The proof closely follows \cite[Proposition 4.1]{BZ}, which we give here for completeness.
For the completely supported test function $h_{T,M}$ described above, we define
\begin{equation*}
\begin{split}
\check{h}_{T,M}(v_1,v_2)=\frac{3}{T^3}\int_{\Re(\mu)=0}h_{T,M}(\mu)v_1^{-\nu_1}v_2^{-\mu_2}\mathrm{d}_{\rm{spec}}\mu.
\end{split}
\end{equation*}
The support of $h_{T,M}(\mu)$ allows us to remove the tangents from $\rm{spec}(\mu)$ at a negligible cost,
and integration by parts in $\check{h}_{T,M}(v_1,v_2)$ constrains $v_1,\,v_2$ to a region $v_i=1+O(M^{-1}T^{\varepsilon})$.
Then we have the trivial bound $\check{h}_{T,M}(v_1,v_2)\ll M^2$.

Now for $y_1, y_2\in (T^{-100}, T^{100})$, $\epsilon_1, \epsilon_2\in \{\pm 1\}$, \cite[Lemma 4.2]{BZ} (using the Weyl-invariance of $h_{T,M}(\mu)$)
implies
\begin{equation*}
\begin{split}
\Phi_{w_6}(\epsilon_1y_1, \epsilon_2y_2)&:=\frac{T^3}{8192\pi^4}\sum_{d\in\{0,1\}^2\atop \eta\in\{\pm 1\}^3}(\eta_1\eta_2\eta_3\epsilon_1)^{d_1}
\epsilon_2^{d_2}\int_0^\infty\int_0^\infty\check{h}_{T,M}\left(\frac{y_2z_1^{3/2}}{y_1},\frac{y_1}{y_2z_2^{3/2}}\right)\\
&\quad\times \sgn(1+\eta_1z_1)^{d_1+d_2}\sgn(1+\eta_2z_2)^{d_1+d_2}\sgn(1+\eta_3z_3)^{d_1+d_2}\\
&\quad\times \left(\frac{2}{\pi}K_0\left(4\pi \sqrt{|z_4|}\right)-(-1)^{d_3}
Y_0\left(4\pi \sqrt{|z_4|}\right)\right)\frac{\mathrm{d}z_1\mathrm{d}z_2}{z_1z_2},
\end{split}
\end{equation*}
where $K_0$ and $Y_0$ are the usual Bessel functions and
\begin{equation*}
\begin{split}
z_3&=\frac{y_1^2}{y_2^2z_1z_2},\quad z_4=y_2(1+\eta_1\sqrt{z_1})(1+\eta_2\sqrt{z_2})(1+\eta_3\sqrt{z_3}),\\
d_3&=d_1+d_2-2d_1d_2.
\end{split}
\end{equation*}
Then using the trivial bound $\ll \log(3+\sqrt{|z_4|^{-1}})$ for $K_0,\, Y_0$-Bessel functions and changing variables
$\frac{y_2z_1^{3/2}}{y_1}\mapsto z_1'$ and $\frac{y_1}{y_2z_2^{3/2}}\mapsto z_2'$, respectively, we imply directly
\bna
\Phi_{w_6}(\epsilon_1y_1, \epsilon_2y_2)\ll T^{3+\varepsilon}.
\ena
For $y\in (T^{-100}, T^{100})$, $\epsilon\in \{\pm 1\}$, \cite[Lemma 4.2]{BZ} again implies that
\begin{equation*}
\begin{split}
\Phi_{w_4}(\epsilon y)&:=\frac{T^3}{64\pi^{13/2}}\sum_{d\in\{0,1\}}(\epsilon i)^{1-d}
\int_0^\infty\int_0^\infty\check{h}_{T,M}\left(\frac{z_1^{3}}{y},\frac{z^3_1z^3_2}{y^2}\right)\\
&\quad\times \sin 2\pi\bigg(\frac{d}{2}+z_1\bigg)\sin 2\pi\bigg(\frac{d}{2}+z_2\bigg)
\sin 2\pi\bigg(\frac{d}{2}+\frac{y}{z_1z_2}\bigg)\frac{\mathrm{d}z_1\mathrm{d}z_2}{z_1z_2},
\end{split}
\end{equation*}
and trivially bounding the integral gives $\Phi_{w_4}(\epsilon y)\ll T^{3+\varepsilon}$.
\end{proof}
\section{The argument principle}\label{sec:argument principle}
In this section, we use the following version of the argument principle, introduced by Selberg \cite[Lemma 14]{Selberg} and Conrey--Soundararajan \cite[Lemma 2.1]{CS}.
\begin{lemma}\label{The argument principle}
Let $\omega$ be a holomorphic function, which is non-zero in some half-plane $\Re(s)\geq W$.
Let $\mathcal{B}$ be the rectangular box with vertices $W_0\pm iH$, $W_1\pm iH$ where $H>0$
and $W_0<W<W_1$. Then
\begin{equation*}
\begin{split}
4H \sum_{\beta+i\gamma\in \mathcal{B}\atop \omega(\beta+i\gamma)=0}&\cos\left(\frac{\pi \gamma}{2H}\right)
\sinh\left(\frac{\pi(\beta-W_0)}{2H} \right)\\
&=\int_{-H}^H\cos\left(\frac{\pi t}{2H}\right)\log|\omega(W_0+it)|\mathrm{d}t\\
&+\int_{W_0}^{W_1}\sinh\left(\frac{\pi(\alpha-W_0)}{2H} \right)\log|\omega(\alpha+iH)\omega(\alpha-iH)|\mathrm{d}\alpha\\
&-\Re\int_{-H}^H\cos\left(\frac{\pi (W_1-W_0+it)}{2iH}\right)\log\omega(W_1+it)\mathrm{d}t.
\end{split}
\end{equation*}
\end{lemma}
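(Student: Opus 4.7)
The plan is to apply Green's second identity on $\mathcal{B}$ with the test pair
\[
u(x,y) := \log|\omega(x+iy)|, \qquad v(x,y) := \cos\!\left(\frac{\pi y}{2H}\right)\sinh\!\left(\frac{\pi(x-W_0)}{2H}\right).
\]
The function $v$ is manifestly real-harmonic ($v_{xx}+v_{yy}=0$), and it is engineered so that (i) $v(\beta+i\gamma)$ is exactly the weight $\cos(\pi\gamma/(2H))\sinh(\pi(\beta-W_0)/(2H))$ appearing on the left-hand side of the lemma, and (ii) $v$ vanishes on three of the four sides of $\mathcal{B}$: on the left edge because $\sinh(0)=0$, and on the top and bottom edges because $\cos(\pm\pi/2)=0$. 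The function $u$ is harmonic off the zeros of $\omega$, and in the sense of distributions satisfies $\Delta u = 2\pi\sum_{\rho} m_\rho\,\delta_\rho$ (Poincar\'e--Lelong), where the sum is over zeros $\rho=\beta+i\gamma$ of $\omega$ in $\mathcal{B}$ counted with multiplicity $m_\rho$.

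Applying Green's identity
\[
\int_{\mathcal{B}}\bigl(u\,\Delta v - v\,\Delta u\bigr)\,\mathrm{d}A = \oint_{\partial\mathcal{B}}\bigl(u\,\partial_n v - v\,\partial_n u\bigr)\,\mathrm{d}\ell,
\]
the left-hand side collapses to $-2\pi\sum_\rho v(\rho)$ because $\Delta v=0$. On the right-hand side, $v\,\partial_n u$ vanishes on the left, top, and bottom edges, while $u\,\partial_n v$ survives on all four. Computing the outward normal derivatives gives $\partial_n v|_{\text{left}}=-(\pi/(2H))\cos(\pi y/(2H))$ and $\partial_n v|_{\text{top,bot}}=-(\pi/(2H))\sinh(\pi(x-W_0)/(2H))$, producing the left-edge contribution $-(\pi/(2H))\int_{-H}^H\cos(\pi t/(2H))\log|\omega(W_0+it)|\,\mathrm{d}t$ and the combined top plus bottom contribution $-(\pi/(2H))\int_{W_0}^{W_1}\sinh(\pi(\alpha-W_0)/(2H))\log|\omega(\alpha+iH)\omega(\alpha-iH)|\,\mathrm{d}\alpha$. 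These match the first two integrals on the right-hand side of the lemma up to a global factor of $-\pi/(2H)$.

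The right edge is the one requiring real work, because both $u\,\partial_n v$ and $v\,\partial_n u$ contribute. The latter contains $\partial_x\log|\omega|$, which I rewrite via Cauchy--Riemann as $\partial_y\arg\omega(W_1+iy)$ and then integrate by parts in $y$; the boundary terms at $y=\pm H$ vanish because $\cos(\pm\pi/2)=0$. The two resulting pieces combine into
\[
\frac{\pi}{2H}\Bigl[\cosh\tfrac{\pi(W_1-W_0)}{2H}\!\int_{-H}^H\!\cos\tfrac{\pi t}{2H}\,\log|\omega(W_1{+}it)|\,\mathrm{d}t - \sinh\tfrac{\pi(W_1-W_0)}{2H}\!\int_{-H}^H\!\sin\tfrac{\pi t}{2H}\,\arg\omega(W_1{+}it)\,\mathrm{d}t\Bigr],
\]
which is precisely $(\pi/(2H))\operatorname{Re}\int_{-H}^H\cos(\pi(W_1-W_0+it)/(2iH))\log\omega(W_1+it)\,\mathrm{d}t$ after expanding the kernel via $\cos(A+iB)=\cos A\cosh B - i\sin A\sinh B$ and matching real parts. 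Multiplying the assembled identity through by $-2H/\pi$ yields the stated formula.

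The main obstacle is the rigorous handling of the distributional Laplacian of $\log|\omega|$ at the zeros: one excises small disks of radius $\epsilon$ around each $\rho$, applies the classical Green's identity on the perforated region, and passes to the limit $\epsilon\to 0$ using the local expansion $\log|\omega(s)|=m_\rho\log|s-\rho|+O(1)$ to recover the contribution $-2\pi\sum_\rho v(\rho)$ from the excised circles. One must also assume $\omega$ has no zeros on $\partial\mathcal{B}$ itself, which can be arranged by a generic choice of $H$ (or by a small perturbation argument).
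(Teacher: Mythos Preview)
Your proof is correct. The paper does not supply its own proof of this lemma at all; it simply quotes the result and attributes it to Selberg and Conrey--Soundararajan. Your argument via Green's identity with the harmonic test function $v$ vanishing on three sides of $\mathcal{B}$ is precisely the approach those references use (Conrey--Soundararajan phrase it as an application of Littlewood's lemma, which is exactly the excise-disks-around-zeros version of Green that you describe at the end). Your handling of the right edge---rewriting $\partial_x\log|\omega|$ as $\partial_y\arg\omega$ via Cauchy--Riemann, integrating by parts, and then recombining with the complex cosine kernel---is clean and correct, and your caveat about zeros on $\partial\mathcal{B}$ is the standard one.
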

\begin{remark}
The special feature of this lemma, which allows it to hold uniformly even for small
$H\asymp 1/\log T$, is that only the real part of the logarithm appears in the portion of the integral within the critical strip,
so that this part can be bounded using the second moment estimate from Proposition \ref{second moment}.
\end{remark}
\section{The twisted second moment}\label{sec:twisted2ndmoment}
In this section, we devote to giving the \textbf{proof of Theorem \ref{MainThm}}.
Using the approximate functional equation in Lemma \ref{lemma:AFE2}, we can rewrite the twisted second moment as
\begin{equation*}
\begin{split}
&\sum_{j}\frac{h_{T,M}(\mu_j)}{\mathcal{N}_j}A_j(\ell_1,\ell_2)
\left|L(1/2+\sigma+i\tau, \phi_j)\right|^2\\
=&\sum_{r}\frac{1}{r^{1+2\sigma}}\sum_{m,n}\frac{1}{(mn)^{1/2+\sigma}}\left(\frac{m}{n}\right)^{i\tau}
\sum_{j}\frac{h_{T,M}(\mu_j)}{\mathcal{N}_j}A_j(\ell_1,\ell_2)
\overline{A_j(n,m)}W_{\mu_{j}}\left(\frac{1}{2}+\sigma+i\tau, r^2mn\right)\\
&+\pi^{6\sigma}\sum_{r}\frac{1}{r^{1-2\sigma}}
\sum_{m,n}\frac{1}{(mn)^{1/2-\sigma}}\left(\frac{m}{n}\right)^{i\tau}
\sum_{j}\frac{h_{T,M}(\mu_j)}{\mathcal{N}_j}A_j(\ell_1,\ell_2)\overline{A_j(n,m)}
\tilde{W}_{\mu_{j}}\left(\frac{1}{2}+\sigma+i\tau, r^2mn\right)\\
:=&\,\mathcal{M}_1(\ell_1, \ell_2)+\mathcal{M}_2(\ell_1, \ell_2).
\end{split}
\end{equation*}
In view of \eqref{W bound}, we may truncate the summations over $m$, $n$ and $r$
to $r^2mn\leq T^{3+\varepsilon}$ with a negligible error term.

Now we use the Kuznetsov trace formula (see Lemma \ref{lemma: KTF}) for $\mathcal{M}_i(\ell_1, \ell_2)$, $i=1,2$, respectively, with the test functions
\begin{align}\label{test h1}
h_1(\mu)=h_{T,M}(\mu)W_{\mu}\left(\frac{1}{2}+\sigma+i\tau, r^2mn\right),
\end{align}
and
\begin{align}\label{test h2}
h_2(\mu)=h_{T,M}(\mu)\tilde{W}_{\mu}\left(\frac{1}{2}+\sigma+i\tau, r^2mn\right).
\end{align}
It turns out we are led to estimate
\begin{equation}\label{use Ku 1}
\begin{split}
\mathcal{M}_1(\ell_1, \ell_2)=\sum_{r}\frac{1}{r^{1+2\sigma}}\sum_{m,n}\frac{1}{(mn)^{1/2+\sigma}}\left(\frac{m}{n}\right)^{i\tau}
\Big(\Delta^{(1)}+\Sigma_4^{(1)}+\Sigma_5^{(1)}+\Sigma_6^{(1)}
-\mathcal{E}_{\mathrm{max}}^{(1)}-\mathcal{E}_{\mathrm{min}}^{(1)}\Big),
\end{split}
\end{equation}
and
\begin{equation}\label{use Ku 2}
\begin{split}
\mathcal{M}_2(\ell_1, \ell_2)=\pi^{6\sigma}\sum_{r}\frac{1}{r^{1-2\sigma}}\sum_{m,n}\frac{1}{(mn)^{1/2-\sigma}}\left(\frac{m}{n}\right)^{i\tau}
\Big(\Delta^{(2)}+\Sigma_4^{(2)}+\Sigma_5^{(2)}+\Sigma_6^{(2)}
-\mathcal{E}_{\mathrm{max}}^{(2)}-\mathcal{E}_{\mathrm{min}}^{(2)}\Big),
\end{split}
\end{equation}
where for $i=1,2$,
\begin{displaymath}
\begin{split}
\Delta^{(i)} &:=\delta_{\ell_1,n}\delta_{\ell_2, m}
\frac{1}{192\pi^5} \int_{\Re(\mu)= 0} h_i(\mu) \mathrm{d}_{\rm{spec}}\mu,\\
\Sigma_{4}^{(i)}&:=\sum_{\epsilon=\pm 1} \sum_{\substack{D_2 \mid D_1\\  m D_1= \ell_1 D_2^2}}
\frac{\tilde{S}(-\epsilon \ell_2, m, n; D_2, D_1)}{D_1D_2}
\Phi_{i,w_4}\!\left(\frac{\epsilon nm\ell_2}{D_1 D_2} \right),\\
\Sigma_{5}^{(i)}&:=\sum_{\epsilon=\pm 1} \sum_{\substack{D_1 \mid D_2\\ n D_2 = \ell_2 D_1^2}}
\frac{\tilde{S}(\epsilon \ell_1, n, m; D_1, D_2) }{D_1D_2}
\Phi_{i,w_5}\!\left( \frac{\epsilon \ell_1nm}{D_1 D_2}\right),\\
\Sigma_6^{(i)}&:=\sum_{\epsilon_1, \epsilon_2 = \pm 1} \sum_{D_1,  D_2  }
\frac{S(\epsilon_2 \ell_2, \epsilon_1 \ell_1, n, m; D_1, D_2)}{D_1D_2}
\Phi_{i,w_6}\! \left(-\frac{\epsilon_2 n\ell_2D_2}{D_1^2},
-\frac{\epsilon_1 m\ell_1D_1}{ D_2^2}\right),
\end{split}
\end{displaymath}
with $\Phi_{i,w_4}(y)$, $\Phi_{i,w_5}(y)$ and $\Phi_{i,w_6}(y)$ defined as in \eqref{defPhi} by using
the test functions $h_i(\mu)$ given by \eqref{test h1} and \eqref{test h2}, respectively;
moreover, we have
\begin{equation*}
\begin{split}
\mathcal{E}_{\max}^{(i)}&:=\sum_{g} \frac{1}{2\pi i} \int\limits_{\Re(\mu)=0}
\frac{h_i(\mu+\mu_g,\mu-\mu_g,-2\mu)}{\mathcal{N}_{\mu,g}}
\overline{B_{\mu,g}(n, m)} B_{\mu,g}(\ell_1, \ell_2) \dd\mu, \\
\mathcal{E}_{\min}^{(i)}&:=\frac{1}{24(2\pi i)^2} \iint\limits_{\Re(\mu)=0} \frac{h_i(\mu)}{\mathcal{N}_{\mu}}
\overline{A_{\mu}(n, m)} A_{\mu}(\ell_1, \ell_2) \dd\mu_1 \dd\mu_2.
\end{split}
\end{equation*}
\subsection{The diagonal terms $\Delta^{(1)}$ and $\Delta^{(2)}$}

Note that we have $\ell_1=n$ and $\ell_2=m$.
Thus we infer that the diagonal terms in \eqref{use Ku 1} and \eqref{use Ku 2} are
\begin{align}\label{diagonal}
\frac{1}{192\pi^5}\sum_{r}\frac{1}{r^{1+2\sigma}}\frac{1}{(\ell_1\ell_2)^{1/2+\sigma}}\left(\frac{\ell_2}{\ell_1}\right)^{i\tau}
\int_{\Re(\mu)=0} h_{T,M}(\mu)W_{\mu}\left(1/2+\sigma+i\tau, r^2\ell_1\ell_2\right) \mathrm{d}_{\rm{spec}}\mu,
\end{align}
and
\begin{align}\label{diagonal1}
\frac{\pi^{6\sigma}}{192\pi^5}\sum_{r}\frac{1}{r^{1-2\sigma}}\frac{1}{(\ell_1\ell_2)^{1/2-\sigma}}\left(\frac{\ell_2}{\ell_1}\right)^{i\tau}
\int_{\Re(\mu)=0} h_{T,M}(\mu)\tilde{W}_{\mu}\left(1/2+\sigma+i\tau, r^2\ell_1\ell_2\right) \mathrm{d}_{\rm{spec}}\mu.
\end{align}
Using the definitions in \eqref{W_j(s,y)} and \eqref{W_j(s,y)1} of $W_{\mu}(s,y)$ and $\tilde{W}_{\mu}(s,y)$, respectively,
we have \eqref{diagonal} and \eqref{diagonal1} equal to
\begin{align}\label{Insert W}
\begin{split}
\frac{1}{192\pi^5}\frac{1}{(\ell_1\ell_2)^{1/2+\sigma}}\left(\frac{\ell_2}{\ell_1}\right)^{i\tau}
&\int_{\Re(\mu)=0} h_{T,M}(\mu)\frac{1}{2\pi i}\int_{(3)}\zeta(1+2\sigma+2u) (\pi^3\ell_1\ell_2)^{-u}\\
&\cdot\prod_{k=1}^{3} \frac{\Gamma\left(\frac{1/2+\sigma+i\tau+u-\mu_{k}}{2}\right)}
{\Gamma\left(\frac{1/2+\sigma+i\tau-\mu_{k}}{2}\right)}\frac{\Gamma\left(\frac{1/2+\sigma-i\tau+u+\mu_{k}}{2}\right)}
{\Gamma\left(\frac{1/2+\sigma-i\tau+\mu_{k}}{2}\right)}
G(u)\frac{\mathrm{d} u}{u}\mathrm{d}_{\rm{spec}}\mu,
\end{split}
\end{align}
and
\begin{align}\label{Insert W1}
\begin{split}
\frac{\pi^{6\sigma}}{192\pi^5}\frac{1}{(\ell_1\ell_2)^{1/2-\sigma}}\left(\frac{\ell_2}{\ell_1}\right)^{i\tau}
&\int_{\Re(\mu)=0} h_{T,M}(\mu)\frac{1}{2\pi i}\int_{(3)}\zeta(1-2\sigma+2v) (\pi^3\ell_1\ell_2)^{-v}\\
&\cdot\prod_{k=1}^{3} \frac{\Gamma\left(\frac{1/2-\sigma+i\tau+v-\mu_{k}}{2}\right)}
{\Gamma\left(\frac{1/2+\sigma+i\tau-\mu_{k}}{2}\right)}\frac{\Gamma\left(\frac{1/2-\sigma-i\tau+v+\mu_{k}}{2}\right)}
{\Gamma\left(\frac{1/2+\sigma-i\tau+\mu_{k}}{2}\right)}
G(v)\frac{\mathrm{d}v}{v}\mathrm{d}_{\rm{spec}}\mu.
\end{split}
\end{align}
We shift the $u$-integral in \eqref{Insert W} to $\Re(u)=-B$ for a large $0<B<A/2$.
The main contributions are coming from the residues at two simple poles at $u=0$ and $u=-\sigma$.
Hence, by the residue theorem, we infer that
\begin{align*}
\begin{split}
&\zeta(1+2\sigma)\frac{1}{(\ell_1\ell_2)^{1/2+\sigma}}\left(\frac{\ell_2}{\ell_1}\right)^{i\tau}\frac{1}{192\pi^5}
\int_{\Re(\mu)=0} h_{T,M}(\mu)\mathrm{d}_{\rm{spec}}\mu\\
&-\frac{\pi^{3\sigma}}{384\pi^5\sigma}\frac{G(\sigma)}{(\ell_1\ell_2)^{1/2}}\left(\frac{\ell_2}{\ell_1}\right)^{i\tau}
\int_{\Re(\mu)=0} h_{T,M}(\mu)\prod_{k=1}^{3} \frac{\Gamma\left(\frac{1/2+i\tau-\mu_{k}}{2}\right)}
{\Gamma\left(\frac{1/2+\sigma+i\tau-\mu_{k}}{2}\right)}\frac{\Gamma\left(\frac{1/2-i\tau+\mu_{k}}{2}\right)}
{\Gamma\left(\frac{1/2+\sigma-i\tau+\mu_{k}}{2}\right)}\mathrm{d}_{\rm{spec}}\mu.
\end{split}
\end{align*}
Similarly, we shift the $v$-integral in \eqref{Insert W1} to $\Re(v)=-B$ for a large $0<B<A/2$.
The main contributions are coming from the residues at two simple poles at $v=0$ and $v=\sigma$, which are
\begin{align*}
\begin{split}
&\frac{\pi^{6\sigma}}{192\pi^5}\frac{\zeta(1-2\sigma)}{(\ell_1\ell_2)^{1/2-\sigma}}\left(\frac{\ell_2}{\ell_1}\right)^{i\tau}
\int_{\Re(\mu)=0} h_{T,M}(\mu)\prod_{k=1}^{3} \frac{\Gamma\left(\frac{1/2-\sigma+i\tau-\mu_{k}}{2}\right)}
{\Gamma\left(\frac{1/2+\sigma+i\tau-\mu_{k}}{2}\right)}\frac{\Gamma\left(\frac{1/2-\sigma-i\tau+\mu_{k}}{2}\right)}
{\Gamma\left(\frac{1/2+\sigma-i\tau+\mu_{k}}{2}\right)}\mathrm{d}_{\rm{spec}}\mu\\
&+\frac{\pi^{3\sigma}}{384\pi^5\sigma}\frac{G(\sigma)}{(\ell_1\ell_2)^{1/2}}\left(\frac{\ell_2}{\ell_1}\right)^{i\tau}
\int_{\Re(\mu)=0} h_{T,M}(\mu)\prod_{k=1}^{3} \frac{\Gamma\left(\frac{1/2+i\tau-\mu_{k}}{2}\right)}
{\Gamma\left(\frac{1/2+\sigma+i\tau-\mu_{k}}{2}\right)}\frac{\Gamma\left(\frac{1/2-i\tau+\mu_{k}}{2}\right)}
{\Gamma\left(\frac{1/2+\sigma-i\tau+\mu_{k}}{2}\right)}\mathrm{d}_{\rm{spec}}\mu.
\end{split}
\end{align*}

Note that the contribution from the residue at $u=-\sigma$ from \eqref{Insert W} and the residue at $v=\sigma$ from \eqref{Insert W1}
are canceled. By Stirling's formula and the exponentially decay of $G$, the contributions from the residues at the other places and the integral at $\Re(u)=\Re(v)=-B$ are negligible.
Moreover, using \eqref{Stirling application 1} and the Taylor expansion
\bna
(1+x)^\alpha=1+\alpha x+\frac{\alpha(\alpha-1)}{2!}x^2+\frac{\alpha(\alpha-1)(\alpha-2)}{3!}x^3+o(x^3),
\ena
we have
\bna
\prod_{k=1}^{3} \frac{\Gamma\left(\frac{1/2-\sigma+i\tau-\mu_{k}}{2}\right)}
{\Gamma\left(\frac{1/2+\sigma+i\tau-\mu_{k}}{2}\right)}\frac{\Gamma\left(\frac{1/2-\sigma-i\tau+\mu_{k}}{2}\right)}
{\Gamma\left(\frac{1/2+\sigma-i\tau+\mu_{k}}{2}\right)}
=\left(-\frac{\mu_1^2\mu_2^2\mu_3^2}{64}\right)^{-\sigma}+O\left(T^{-6\sigma-3/4}\right),
\ena
provided for $|\tau|\ll T^{1/4}$.
Thus, we obtain the main terms of Theorem \ref{MainThm}
and the first error term $O((\ell_1\ell_2)^{-1/2+\sigma}T^{9/4-6\sigma}M^2)$ by mean of \eqref{property2 for h}.

\subsection{The terms $\Sigma_{4}^{(i)}$ and $\Sigma_{5}^{(i)}$ for $i=1,2$}
In this subsection, we will focus on the contribution from the $w_4$-term in $\Sigma_{4}^{(1)}$,
that is,
\begin{align*}
\sum_{\epsilon=\pm1}\sum_{r}\frac{1}{r^{1+2\sigma}}\sum_{m,n}
\frac{1}{(mn)^{1/2+\sigma}}\left(\frac{m}{n}\right)^{i\tau}
\sum_{\substack{D_2 \mid D_1\\  m D_1= \ell_1 D_2^2}}
\frac{\tilde{S}(-\epsilon \ell_2, m, n; D_2, D_1)}{D_1D_2}
\Phi_{1,w_4}\!\left(\frac{\epsilon nm\ell_2}{D_1 D_2} \right),
\end{align*}
since the contributions from the $w_4$-term in $\Sigma_{4}^{(2)}$ and
the $w_5$-term in $\Sigma_{5}^{(i)}$ ($i=1,2$) are very similar.
Inserting a smooth partition of unity into $m$, $n$-sums, we are led to estimate
\begin{align}\label{w4 contribution}
\sum_{\epsilon=\pm1}\sum_{r}\frac{1}{r^{1+2\sigma}}\sum_{m,n}
\frac{W_1(\frac{m}{\mathcal{M}})W_2(\frac{n}{\mathcal{N}})}{(mn)^{1/2+\sigma}}\left(\frac{m}{n}\right)^{i\tau}
\sum_{\delta,D \atop m\delta=\ell_1 D}\frac{\tilde{S}(-\epsilon \ell_2, m, n; D, D\delta)}{D^2\delta}
\Phi_{1,w_4}\!\left(\frac{\epsilon nm\ell_2}{D^2\delta} \right),
\end{align}
where $\mathcal{M}, \mathcal{N}\gg1$, $\mathcal{M}\mathcal{N}\ll T^{3+\varepsilon}$ (by \eqref{W bound})
and $W_i(x)$ ($i=1,2$) are compactly supported in $[1,2]$ and satisfy $x^jW_i^{(j)}(x)\ll_j 1$, for any integer $j\geq0$.
Using the definitions \eqref{defPhi}, \eqref{test h1} and \eqref{W_j(s,y)} of $\Phi_{w_4}\!\left(y\right)$, $h_1(\mu)$ and $W_{\mu}(s, y)$, respectively,
we get that $\Phi_{1,w_4}\!\left(y\right)$ is given by
\begin{equation*}
\begin{split}
\int_{\Re(\mu)=0} h_{T,M}(\mu)\frac{1}{2\pi i}\int_{(3)} (\pi^3r^2mn)^{-u}
\prod_{k=1}^{3}\prod_{\pm} \frac{\Gamma\left(\frac{\frac{1}{2}+\sigma\pm i\tau+u\mp \mu_{k}}{2}\right)}
{\Gamma\left(\frac{\frac{1}{2}+\sigma\pm i\tau\mp \mu_{k}}{2}\right)}
G(u)\frac{\mathrm{d} u}{u}K_{w_4}(y;\mu)\mathrm{d}_{\rm{spec}}\mu.
\end{split}
\end{equation*}
Now we consider the new resulting $\mu$-integral
\begin{equation*}
\begin{split}
\int_{\Re(\mu)=0} h_{T,M}(\mu)
\prod_{k=1}^{3}\frac{\Gamma\left(\frac{\frac{1}{2}+\sigma+ i\tau+u- \mu_{k}}{2}\right)}
{\Gamma\left(\frac{\frac{1}{2}+\sigma+ i\tau-\mu_{k}}{2}\right)}
\frac{\Gamma\left(\frac{\frac{1}{2}+\sigma- i\tau+u+\mu_{k}}{2}\right)}
{\Gamma\left(\frac{\frac{1}{2}+\sigma- i\tau+ \mu_{k}}{2}\right)}
K_{w_4}(y;\mu)\mathrm{d}_{\rm{spec}}\mu,
\end{split}
\end{equation*}
with $|\Im(u)|\ll T^\varepsilon$, since the exponentially decay of $G(u)$.
With the help of \eqref{Stirling application 1}, we have
\begin{equation*}
\begin{split}
\prod_{k=1}^{3}\frac{\Gamma\left(\frac{\frac{1}{2}+\sigma+ i\tau+u- \mu_{k}}{2}\right)}
{\Gamma\left(\frac{\frac{1}{2}+\sigma+ i\tau-\mu_{k}}{2}\right)}
\frac{\Gamma\left(\frac{\frac{1}{2}+\sigma- i\tau+u+\mu_{k}}{2}\right)}
{\Gamma\left(\frac{\frac{1}{2}+\sigma- i\tau+ \mu_{k}}{2}\right)}\ll T^{O(1)}.
\end{split}
\end{equation*}
Then by applying the similar argument as Blomer--Buttcane \cite[Lemma 1]{BB}, we can truncate the $D$, $\delta$-sums
at some $T^{B_1}$ for some large ${B_1}$ at the cost of a negligible error.
In other words, we can use Lemma \ref{truncate 2nd} with $\Phi_{1,w_4}\!\left(y\right)$ replacing $\Phi_{w_4}\!\left(y\right)$.
Let
\begin{equation}
\begin{split}
W_{\mu,N_1}(1/2+\sigma+i\tau,y)=\frac{1}{2\pi i} \int_{(3)}(\pi^3y)^{-u}& \prod_{k=1}^{3}
\prod_\pm \Big(\frac{1/2+\sigma\mp i\tau\pm\mu_{k}}{2}\Big)^{u/2} \nonumber\\
&\cdot \left(1+\sum_{n=1}^{N_1}\frac{2^nP_{n}(u)}{(1/2+\sigma\mp i\tau\pm\mu_{k})^{n}}\right)
G(u)\frac{\dd u}{u}.
\end{split}
\end{equation}
Then with the help of \eqref{W new form},
we can replace $W_{\mu}$ by $W_{\mu,N_1}$ in \eqref{w4 contribution} with a negligible error
if we choose $N_1$ to be large enough.

Denote $\Phi^\star_{1,w_4}\!\left(y\right)$ as in \eqref{defPhi} with
$h_{T,M}(\mu)W_{\mu,N_1}\left(\frac{1}{2}+\sigma+i\tau, r^2mn\right)$ replacing $h_1(\mu)$.
Then, in \eqref{w4 contribution}, we can replace $\Phi_{1,w_4}\!\left(y\right)$ by $\Phi^\star_{1,w_4}\!\left(y\right)$
with a negligible error term if we choose $N_1$ to be large enough. Hence, together with the above truncation,
we only need to consider the contribution from $\Phi^\star_{1,w_4}\!\left(y\right)$, where $|y|>T^{-B_2}$ (by Lemma \ref{truncate 1st}).

Note that in the original proof of Lemma \ref{truncate 2nd} (see \cite[Lemma 8]{BB}),
the only two properties of $h_{T,M}(\mu)$ which are used are \eqref{property1 for h} and \eqref{property2 for h}.
Obviously, $h_{T,M}(\mu)W_{\mu,N_1}\left(\frac{1}{2}+\sigma+i\tau, r^2mn\right)$ also satisfies these two properties.
So we have
\begin{align*}
\Phi^\star_{1,w_4}\!\left(y\right)\ll_{\varepsilon,B_2}T^{-B_2},
\end{align*}
provided for $T^{-B_2}\leq |y|\leq T^{3-\varepsilon}$.
Then by Lemma \ref{truncate 2nd}, we can truncate the $D$, $\delta$-sums, again with a negligible error, at
\begin{align}\label{eqn:condition1}
\frac{mn\ell_2}{D^2\delta}\geq T^{3-\varepsilon},
\end{align}
or in other words,
$$
D^2\delta\leq \frac{\mathcal{M}\mathcal{N}\ell_2}{T^{3-\varepsilon}}\ll \ell_2T^\varepsilon.
$$
Note that we have $m\delta=\ell_1D$ now, which implies $\mathcal{M}$ is small. That is,
$$
\mathcal{M}\leq \ell_1D/\delta
\ll \ell_1\ell_2^{1/2}T^\varepsilon/\delta^{3/2}.
$$
And by \eqref{eqn:condition1} we have
$$
\mathcal{N}\gg T^{3-\varepsilon}D^2\delta/(m\ell_2)
=T^{3-\varepsilon}D\delta^2/(\ell_1\ell_2)\gg T^{3-\varepsilon}/(\ell_1\ell_2).
$$
Therefore, by the above evaluation and the definition of $\Phi^\star_{1,w_4}\!\left(y\right)$, we only need to estimate
\begin{align}\label{Phi star_1,w_4}
&\sum_{\epsilon=\pm1}\sum_{r}\frac{1}{r^{1+2\sigma}}\sum_{m,n}
\frac{W_1(\frac{m}{\mathcal{M}})W_2(\frac{n}{\mathcal{N}})}{(mn)^{1/2+\sigma}}\left(\frac{m}{n}\right)^{i\tau}
\sum_{\delta,D \atop m\delta=\ell_1 D}\frac{\tilde{S}(-\epsilon \ell_2, m, n; D, D\delta)}{D^2\delta}\nonumber\\
&\times\int_{\Re(\mu)=0} h_{T,M}(\mu)W_{\mu,N_1}\left(\frac{1}{2}+\sigma+i\tau, r^2mn\right)
K_{w_4}\left(\frac{\epsilon nm\ell_2}{D^2\delta};\mu\right)\mathrm{d}_{\rm{spec}}\mu.
\end{align}
We apply the Poisson summation formula for the $n$-sum, getting
\begin{align}\label{n sum after Poisson}
&\sum_{n=1}^\infty \frac{W_2(\frac{n}{\mathcal{N}})}{n^{1/2+\sigma+i\tau}}\tilde{S}(-\epsilon \ell_2, m, n; D, D\delta)
W_{\mu,N_1}\left(\frac{1}{2}+\sigma+i\tau, r^2mn\right)
K_{w_4}\left(\frac{\epsilon nm\ell_2}{D^2\delta};\mu\right)\nonumber\\
=&\,\sum_{a(\mod\delta)}\tilde{S}(-\epsilon \ell_2, m, a; D, D\delta)\sum_{n\equiv a(\mod \delta)}
\frac{W_2(\frac{n}{\mathcal{N}})}{n^{1/2+\sigma+i\tau}}
W_{\mu,N_1}\left(\frac{1}{2}+\sigma+i\tau, r^2mn\right)
K_{w_4}\left(\frac{\epsilon nm\ell_2}{D^2\delta};\mu\right)\nonumber\\
=&\,\,\frac{\mathcal{N}^{1/2-\sigma-i\tau}}{\delta}\sum_{a(\mod\delta)}
\tilde{S}(-\epsilon \ell_2, m, a; D, D\delta)
\sum_{n\in\mathbb{Z}}e\left(\frac{an}{\delta}\right)\nonumber\\
&\qquad\cdot
\int_{-\infty}^\infty \frac{W_2(x)}{x^{1/2+\sigma+i\tau}}
W_{\mu,N_1}\left(\frac{1}{2}+\sigma+i\tau, r^2mx\mathcal{N}\right)
K_{w_4}\left(\frac{\epsilon x\mathcal{N}m\ell_2}{D^2\delta};\mu\right)e\left(-\frac{n\mathcal{N}x}{\delta}\right)\dd x.
\end{align}
Inserting \eqref{n sum after Poisson} into \eqref{Phi star_1,w_4}, using Lemma \ref{K_w_4} and letting
\bna
\rho_1=\Im(\mu_1+\mu_2),\quad \rho_2=\Im(\mu_1-\mu_2),
\ena
we get the resulting $\rho$-integral involving $\tilde{K}$-function is
\begin{align}\label{rho-integral}
\int_{\mathbb{R}^2} h_{T,M}(\mu)&W_{\mu,N_1}\left(\frac{1}{2}+\sigma+i\tau, r^2mx\mathcal{N}\right)\nonumber\\
&\cdot\int_0^\infty \tilde{K}_{i\rho_2}(2\sqrt{u})\exp\left(i\rho_1\log\frac{\pi^3|y|}{u^{3/2}}\right)
\exp\left(\frac{2i\pi^3y}{u}\right)\frac{\mathrm{d}u}{u}\text{spec}(\mu)\mathrm{d}\rho,
\end{align}
where
\bna
y=\frac{\epsilon x\mathcal{N}m\ell_2}{D^2\delta},
\ena
and $\rho=(\rho_1,\rho_2)$.

We use the strategy in \cite[Lemma 8]{BB} to deal with it.
By using \eqref{property1 for h}, the definition of $W_{\mu,N_1}$, and integration of by parts, we get
that the $\rho_1$-integral is negligible unless
\bna
u\asymp|y|^{2/3}.
\ena
For the $u$-integral in \eqref{rho-integral}, by differentiating $k$ times with respect to $y$ and by using \eqref{J- bound},
we see that each differentiation produces a factor
\bna
T|y|^{-1}+u^{-1}\asymp(T+|y|^{1/3})|y|^{-1}.
\ena
We can truncate the integral involving $J^{-}$-function in the same way, and finally get
\bea\label{K_w_4 bound}
|y|^k\frac{\partial^k}{\partial y^k}K_{w_4}(y;\mu)\ll (T+|y|^{1/3})^k.
\eea

Now we go back to the $x$-integral in \eqref{n sum after Poisson}.
With the help of \eqref{K_w_4 bound}, the definition of $W_{\mu,N_1}$, and the bounds for $\mathcal{M}$ and $\mathcal{N}$,
we integrate by parts and show that the $x$-integral is negligible unless $n=0$.
By opening $\tilde{S}(-\epsilon \ell_2, m, a; D, D\delta)$
and computing the $a$-sum, we obtain
\begin{align*}
\sum_{a(\mod\delta)}&\tilde{S}(-\epsilon \ell_2, m, a; D, D\delta)\\
&=\sum_{a(\mod \delta)}\,\sum_{\substack {C_1(\mod D), \,C_2(\mod D\delta)\\(C_1,D)=(C_2,\delta)=1}}
e\left(m\frac{\bar{C_1}C_2}{D}+a\frac{\bar{C_2}}{\delta}-\epsilon \ell_2\frac{C_1}{D}\right)
=0,
\end{align*}
unless $\delta=1$.
With the help of this and the fact $m=\ell_1D$, we see that the contribution from $n=0$ to
\eqref{Phi star_1,w_4} is
\begin{align}\label{n=0 contribution}
&\mathcal{N}^{1/2-\sigma-i\tau}\ell_1^{-1/2-\sigma+i\tau}\sum_{\epsilon=\pm1}\sum_{r}\frac{1}{r^{1+2\sigma}}
\sum_{D=1}^\infty W_1\left(\frac{\ell_1D}{\mathcal{M}}\right)\frac{1}{D^{3/2+\sigma-i\tau}}
\sum_{\substack{C_1(\mod D)\\(C_1,D)=1}}
e\left(-\frac{\epsilon \ell_2C_1}{D}\right)\nonumber\\
&\cdot\int_{\Re(\mu)=0} h_{T,M}(\mu)\int_{-\infty}^\infty \frac{W_2(x)}{x^{1/2+\sigma+i\tau}}
W_{\mu,N_1}\left(\frac{1}{2}+\sigma+i\tau, x\mathcal{N}r^2\ell_1D\right)
K_{w_4}\left(\frac{\epsilon x\mathcal{N}\ell_1\ell_2}{D};\mu\right)\dd x\mathrm{d}_{\rm{spec}}\mu.
\end{align}
By inserting the definition \eqref{K_w_4 definition} of $K_{w_4}(y;\mu)$, the above double integrals become
\begin{align}\label{inserting Phi}
& \int_{\Re(\mu)=0} h_{T,M}(\mu)\nonumber\\
&\hskip 20pt\times\int_{-i\infty}^{i\infty}
\left(\int_{-\infty}^\infty x^{-u-1/2-\sigma-i\tau}W_2(x)
W_{\mu,N_1}\left(\frac{1}{2}+\sigma+i\tau, x\mathcal{N}r^2\ell_1D\right)\dd x\right)
\left|\frac{\ell_1\ell_2\mathcal{N}}{D}\right|^{-u}\nonumber\\
&\hskip 40pt\times\tilde{G}^{\epsilon}(u,\mu)\frac{\mathrm{d}u}{2\pi i}\mathrm{d}_{\rm{spec}}\mu.
\end{align}
By integration by parts with respect to the above $x$-integral,
we can restrict the $u$-integral to $|\tau+\Im(u)|\leq T^\varepsilon$.
Inserting the definition \eqref{tilde G definition} of $\tilde{G}^\epsilon(u,\mu)$ into \eqref{inserting Phi}, we obtain
the corresponding $\mu$-integral is
\begin{align*}
\int_{\Re(\mu)=0} h_{T,M}(\mu)&W_{\mu,N_1}\left(\frac{1}{2}+\sigma+i\tau, x\mathcal{N}r^2\ell_1D\right)\\
&\hskip 10pt\times\Biggl(\prod_{k=1}^3\frac{\Gamma(\frac{1}{2}(u-\mu_k))}{\Gamma(\frac{1}{2}(1-u+\mu_k))}
+i\epsilon\prod_{k=1}^3\frac{\Gamma(\frac{1}{2}(1+u-\mu_k))}{\Gamma(\frac{1}{2}(2-u+\mu_k))} \Biggr)\mathrm{d}_{\rm{spec}}\mu.
\end{align*}
We only need to consider the first part, since the second part is very similar.
For fixed $\sigma_1\in\mathbb{R}$, $t\in\mathbb{R}$ and $|t|\geq 1$, we have the Stirling formula
\begin{align*}
\Gamma(\sigma_1+it)=\sqrt{2\pi}e^{\pi|t|/2}|t|^{\sigma_1-1/2}e^{it(\log|t|-1)}
e^{i(\sigma_1-1/2)\lambda\pi/2} \left(1+O(|t|^{-1})\right),
\end{align*}
where $\lambda=1$, if $t\geq1$, and $\lambda=-1$, if $t\leq -1$.
For convenience, we denote $\mu_1=it_1$, $\mu_2=it_2$, $\mu_3=-i(t_1+t_2)=it_3$ and $\Im(u)=t$.
Here, in the essential integrated range, we have $|t_k|\asymp|t_k-t_k'|\asymp T$ for $1\leq k, k'\leq 3$ and $k\neq k'$,
since we are considering the generic case.
Without loss of generality, we assume $t_1>0$, $t_2>0$ and $t_3<0$, and get
\begin{equation}\label{use stirling}
\begin{split}
&\quad\ \prod_{k=1}^3\frac{\Gamma(\frac{1}{2}(u-\mu_k))}{\Gamma(\frac{1}{2}(1-u+\mu_k))} \\
&=2^{3/2}e^{i(-3t(\log 2+1)+\pi/4)}
\frac{e^{i(t-t_1)\log(t_1-t)+i(t-t_2)\log(t_2-t)+i(t_1+t_2+t)\log(t_1+t_2+t)}}
{(t_1-t)^{1/2}(t_2-t)^{1/2}(t_1+t_2+t)^{1/2}}
+O(T^{-5/2}).
\end{split}
\end{equation}
By the trivial estimate, the contribution from the error term in \eqref{use stirling} to \eqref{n=0 contribution}
is
\begin{align*}
&\ll\mathcal{N}^{1/2-\sigma}\ell_1^{-1/2-\sigma}\sum_{r}\frac{1}{r^{1+2\sigma}}
\sum_{D=1}^\infty W_1\left(\frac{\ell_1D}{\mathcal{M}}\right)\frac{1}{D^{3/2+\sigma}}
\bigg|\sum_{\substack{C_1(\mod D)\\(C_1,D)=1}}
e\left(-\frac{\epsilon \ell_2C_1}{D}\right)\bigg|\cdot T^{-5/2}\\
&\cdot\int_{\Re(u)=0\atop |\tau+\Im(u)|\leq T^\varepsilon}
\bigg|\int_{-\infty}^\infty x^{-u-1/2-\sigma-i\tau}W_2(x)
\int_{\Re(\mu)=0} h_{T,M}(\mu)
W_{\mu,N_1}\left(\frac{1}{2}+\sigma+i\tau, x\mathcal{N}r^2\ell_1D\right)\mathrm{d}_{\rm{spec}}\mu\dd x\bigg|\dd u\\
&\ll\mathcal{N}^{1/2-\sigma}\ell_1^{-1/2-\sigma}T^{-5/2}T^{3+\varepsilon}M^2
\sum_{D\asymp \frac{\mathcal{M}}{\ell_1}}\frac{(D,\ell_2)}{D^{3/2+\sigma}}\ll T^{2-3\sigma+\varepsilon}M^2.
\end{align*}
For the main term in \eqref{use stirling}, we denote
the phase function for the $\mu_1$-integral is
$$
\phi(t_1):=(t-t_1)\log(t_1-t)+(t_1+t_2+t)\log (t_1+t_2+t).
$$
Note that
$$
\phi'(t_1)=\log\frac{t_1+t_2+t}{t_1-t}\gg 1.
$$
And hence, by partial integration many times, the $\mu_1$-integral is negligible.
We finally prove that the contribution from the $w_4$-term in $\Sigma_{4}^{(1)}$ to \eqref{use Ku 1}
is $O(T^{2-3\sigma+\varepsilon}M^2)$, which is the second error term of Theorem \ref{MainThm}.
\subsection{The terms $\Sigma_{6}^{(1)}$ and $\Sigma_{6}^{(2)}$}
Similarly, we only consider the contribution from $w_6$-term in $\Sigma_{6}^{(1)}$.
By Lemma \ref{truncate 3rd}, we can truncate the sums at
\[
  \frac{(n\ell_2)^{1/3}(m\ell_1)^{1/6}}{D_1^{1/2}}\geq T^{1-\varepsilon},
  \quad \textrm{and} \quad
  \frac{(n\ell_2)^{1/6}(m\ell_1)^{1/3}}{D_2^{1/2}}\geq T^{1-\varepsilon},
\]
which means
$$
\frac{(mn\ell_1\ell_2)^{1/2}}{(D_1D_2)^{1/2}}\geq T^{2-\varepsilon}.
$$
This gives us
$$
D_1D_2\ll \frac{mn\ell_1\ell_2}{T^{4-\varepsilon}},
$$
which is impossible provided that $\ell_1$, $\ell_2\leq T^{1/3}$, since the essential sums of $n$ and $m$
are truncated at $mn\leq T^{3+\varepsilon}$.
Thus, the contribution of the $w_6$-term is $O(T^{-B})$.

\subsection{The terms $\mathcal{E}_{\max}^{(1)}$ and $\mathcal{E}_{\max}^{(2)}$}
\label{subsec:contribution from Eisenstein}
We only treat the contribution of the maximal Eisenstein series $\mathcal{E}_{\max}^{(1)}$,
since the other contribution can be handled similarly.
Firstly, we recall Weyl's law on $\rm GL_2$ (see Venkov \cite[Theorem 7.3]{Venkov} and Hejhal \cite[pg.\,119, Theorem 8.1]{Hejhal}),
\begin{equation}\label{Weyl law on GL(2)}
\sharp\left\{g: t_g\leq T\right\}=\frac{T^2}{12}-\frac{T\log T}{2\pi}+C_0T+O\left(\frac{T}{\log T}\right),
\end{equation}
where $C_0$ is a constant.
Next we use the subconvexity bounds for
$\rm GL(1)$ and $\rm GL(2)$ $L$-functions to handle the contribution from this part. More precisely, we denote
\begin{equation*}
\begin{split}
\Theta:=&\sum_{r}\frac{1}{r^{1+2\sigma}}\sum_{m,n}\frac{1}{(mn)^{1/2+\sigma}}\left(\frac{m}{n}\right)^{i\tau}
\sum_{g} \frac{1}{2\pi i} \int\limits_{\Re(\mu)=0}
\frac{h_1(\mu')}{\mathcal{N}_{\mu,g}}
\overline{B_{\mu,g}(n, m)} B_{\mu,g}(\ell_1, \ell_2) \dd\mu,
\end{split}
\end{equation*}
where $\mu'$ is defined in \eqref{definition mu'}. Inserting the definitions
\eqref{test h1} and \eqref{W_j(s,y)} into $h_1(\mu)$ and $W_{\mu}(s, y)$, respectively,
and using \eqref{max Eisenstein coefficient property} and \eqref{max Eisenstein L-function},
we get
\begin{equation*}
\begin{split}
\Theta=\frac{1}{(2\pi i)^2}\sum_{g}  &\int\limits_{\Re(\mu)=0}
\frac{h_{T,M}(\mu')}{\mathcal{N}_{\mu,g}}B_{\mu,g}(\ell_1, \ell_2)\int\limits_{(3)}\pi^{-3u}\,
\zeta(\overline{s}+u+2\mu) \zeta(s+u-2\mu)\\
&\times L(\overline{s}+u-\mu,g)\,L(s+u+\mu,g)\prod_{k=1}^{3} \frac{\Gamma\left(\frac{s+u-\mu'_{k}}{2}\right)}
{\Gamma\left(\frac{s-\mu'_{k}}{2}\right)}\frac{\Gamma\left(\frac{\overline{s}+u+\mu'_{k}}{2}\right)}
{\Gamma\left(\frac{\overline{s}+\mu'_{k}}{2}\right)}
G(u)\frac{\mathrm{d}u}{u}\mathrm{d}\mu,
\end{split}
\end{equation*}
where $s=1/2+\sigma+i\tau$.  By the definition of $h_{T,M}$, we have $h_{T,M}(\mu')$
is negligibly small unless
\begin{equation*}
\begin{split}
|\mu_k'-\mu_{0,k}|\leq M,\quad{for}\,\,k=1,2,3.
\end{split}
\end{equation*}
Recall that $|\mu_{0,k}|\asymp T$ and \eqref{N max}. We now shift the line of integration to $\Re(u)=\varepsilon$
and use Stirling's formula, the subconvexity bounds
(see Bourgain \cite[Theorem 5]{Bourgain}, and Meurman \cite{Meurman})
\begin{equation}\label{subconvexity bounds}
\begin{split}
\zeta(1/2+it)\ll(1+|t|)^{13/84+\varepsilon},\quad L(1/2+it,g)\ll(1+|t|)^{1/3+\varepsilon}
\end{split}
\end{equation}
and \cite[(5.20)]{IK} to get
\begin{equation*}
\begin{split}
\Theta\ll T^{41/42-41\sigma/21+\varepsilon}M(\ell_1\ell_2)^\vartheta\sum_{T-M\leq\mu_g\leq T+M}1.
\end{split}
\end{equation*}
Finally, using Weyl's law \eqref{Weyl law on GL(2)}, we conclude that
\begin{equation}\label{bound 8}
\begin{split}
\Theta\ll T^{83/42-41\sigma/21+\varepsilon}M^2(\ell_1\ell_2)^\vartheta,
\end{split}
\end{equation}
which gives the third error term of Theorem \ref{MainThm}.
\subsection{The terms $\mathcal{E}_{\min}^{(1)}$ and $\mathcal{E}_{\min}^{(2)}$}
\label{subsec:contribution from Eisenstein2}
Similarly, to treat the contribution of the minimal Eisenstein series, we define
\begin{equation*}
\begin{split}
\Xi:=&\sum_{r}\frac{1}{r^{1+2\sigma}}\sum_{m,n}\frac{1}{(mn)^{1/2+\sigma}}\left(\frac{m}{n}\right)^{i\tau}
\frac{1}{24(2\pi i)^2} \iint\limits_{\Re(\mu)=0} \frac{h_1(\mu)}{\mathcal{N}_{\mu}}
\overline{A_{\mu}(n, m)} A_{\mu}(\ell_1, \ell_2) \dd\mu_1 \dd\mu_2.
\end{split}
\end{equation*}
Inserting the definitions
\eqref{test h1} and \eqref{W_j(s,y)} into $h_1(\mu)$ and $W_{\mu}(s, y)$,
and using \eqref{min Eisenstein coefficient property} and \eqref{min Eisenstein L-function},
we get
\begin{equation*}
\begin{split}
\Xi=&\,\frac{1}{24(2\pi i)^2} \iint\limits_{\Re(\mu)=0}
\frac{h_{T,M}(\mu)}{\mathcal{N}_{\mu}}
A_{\mu}(\ell_1, \ell_2)\\
&\times \frac{1}{2\pi i}\int\limits_{(3)}\pi^{-3u}\,\prod_{k=1}^{3} \zeta(s+u+\mu_k)\zeta(\overline{s}+u-\mu_k)
\frac{\Gamma\left(\frac{s+u-\mu_{k}}{2}\right)}
{\Gamma\left(\frac{s-\mu_{k}}{2}\right)}\frac{\Gamma\left(\frac{\overline{s}+u+\mu_{k}}{2}\right)}
{\Gamma\left(\frac{\overline{s}+\mu_{k}}{2}\right)}
G(u)\frac{\mathrm{d}u}{u}\dd\mu_1 \dd\mu_2,
\end{split}
\end{equation*}
where $s=1/2+\sigma+i\tau$.
We shift the line of integration to $\Re(u)=\varepsilon$ and use Stirling's formula,
the subconvexity bound \eqref{subconvexity bounds},
and \cite[(5.20)]{IK} to conclude that
\begin{equation*}
\begin{split}
\Xi\ll T^{13/14-13\sigma/7+\varepsilon}M^2(\ell_1\ell_2)^\vartheta,
\end{split}
\end{equation*}
which is smaller than the contribution of the maximal Eisenstein series given in \eqref{bound 8}.
\section{The mollified second moment}\label{sec:mollified2ndmoment}
In this section, we will give the \textbf{proof of Proposition \ref{second moment}}.
Recall the definition of $M(s,\phi_j)$ from \eqref{mollifier definition}.
By applying the Hecke relation \eqref{Hecke relation}, we obtain
\[
\begin{split}
|M(1/2+\sigma+i\tau,\phi_j)|^2 &=\sum_{\ell_1}\sum_{\ell_2} \frac{A_j(1,\ell_2)\overline{A_j(1,\ell_1)}}{\ell_1^{1/2+\sigma-i\tau}\ell_2^{1/2+\sigma+i\tau}}
x_{\ell_1}x_{\ell_2} \\
&=\sum_{d}\sum_{(\ell_1,d)=1}\sum_{(\ell_2,d)=1} \frac{A_j(\ell_1,\ell_2)}{d^{1+2\sigma}\ell_1^{1/2+\sigma-i\tau}\ell_2^{1/2+\sigma+i\tau}}
x_{d\ell_1}x_{d\ell_2}.
\end{split}
\]
Using the above relation and Theorem \ref{MainThm}, the left-hand side of \eqref{Aim2} is transformed into
\[
\begin{split}
&\frac{1}{\mathcal{H}}\sum_{d}\sum_{(\ell_1,d)=1}\sum_{(\ell_2,d)=1} \frac{x_{d\ell_1}x_{d\ell_2}}{d^{1+2\sigma}\ell_1^{1/2+\sigma-i\tau}\ell_2^{1/2+\sigma+i\tau}}
 \sum_{j}\frac{h_{T,M}(\mu_j)}{\mathcal{N}_j}A_j(\ell_1,\ell_2)|L(1/2+\sigma+i\tau,\phi_j)|^2\\
=&\,\zeta(1+2\sigma)\sum_{d}\sum_{(\ell_1,d)=1}\sum_{(\ell_2,d)=1}
\frac{x_{d\ell_1}x_{d\ell_2}}{(d\ell_1\ell_2)^{1+2\sigma}}\\
&+\,\zeta(1-2\sigma)\sum_{d}\sum_{(\ell_1,d)=1}\sum_{(\ell_2,d)=1}\frac{x_{d\ell_1}x_{d\ell_2}}{d^{1+2\sigma}\ell_1\ell_2}
\frac{1}{\mathcal{H}}\frac{1}{192\pi^5}
\int_{\Re(\mu)=0} h_{T,M}(\mu)\prod_{k=1}^{3}\left(-\frac{\mu^2_k}{4\pi^2}\right)^{-\sigma}\mathrm{d}_{\rm{spec}}\mu\\
&+O\bigg(\sum_{d}\sum_{\ell_1}\sum_{\ell_2} \frac{|x_{d\ell_1}x_{d\ell_2}|}{(d^2\ell_1\ell_2)^{1/2+\sigma}}
\,\big((\ell_1\ell_2)^{-1/2+\sigma}T^{-3/4-6\sigma}+T^{-1-3\sigma+\varepsilon}
+T^{-43/42-41\sigma/21+\varepsilon}(\ell_1\ell_2)^\vartheta\big)\bigg)\\
:=&\,\mathcal{S}_1+\mathcal{S}_2+\mathcal{E}_1+\mathcal{E}_2+\mathcal{E}_3.
\end{split}
\]
Next, we apply a strategy similar to that of Goldfeld--Huang \cite[\S\,4]{GH} to deal with the terms $\mathcal{S}_1$ and $\mathcal{S}_2$.
\subsection{The treatment of $\mathcal{S}_1$}
By the integral definition \eqref{x_l definition} of $x_{d\ell_i}$, ($i=1,2$), we have
\begin{equation}\label{S_1}
\begin{split}
\mathcal{S}_1=&\zeta(1+2\sigma)
\frac{1}{(2\pi i)^2}\int_{(3)}\int_{(3)}G_1(u,v;\sigma)\frac{L^{2u}-L^u}{u^2}\frac{L^{2v}-L^v}{v^2}
\frac{\mathrm{d} u\mathrm{d}v}{(\log L)^2}\\
\end{split}
\end{equation}
where
\begin{equation*}
\begin{split}
G_1(u,v;\sigma)=&\,\sum_{d}\frac{\mu^2(d)}{d^{1+u+v+2\sigma}}\sum_{(\ell_1,d)=1}\sum_{(\ell_2,d)=1}
\frac{\mu(\ell_1)\mu(\ell_2)}{\ell_1^{1+u+2\sigma}\ell_2^{1+v+2\sigma}}\\
=&\,\prod_p\big(1-p^{-1-u-2\sigma}-p^{-1-v-2\sigma}+p^{-1-u-v-2\sigma}+p^{-2-u-v-4\sigma}\big)\\
=&\,\frac{\zeta(1+u+v+2\sigma)}{\zeta(1+u+2\sigma)\zeta(1+v+2\sigma)}H_1(u,v;\sigma).
\end{split}
\end{equation*}
The Euler product defining $H_1(u,v;\sigma)$ converges absolutely in the region
\bna
\Re(u)+2\sigma>-1/2,\quad\Re(v)+2\sigma>-1/2,\quad\Re(u)+\Re(v)+2\sigma>-1/2.
\ena
\begin{remark}
Recall that in the region $\Re(s)\geq1-c/\log(|\Im(s)|+3)$ (here $c$ is some positive constant),
$\zeta(s)$ is analytic except for a single pole at $s=1$, and has no zeros and satisfies
(see e.g., \cite[(3.11.7) and (3.11.8)]{Titchmarsh})
\bna
\zeta(s)^{-1}\ll \log(|\Im (s)|+3),\quad\text{and}\quad
\zeta'(s)/\zeta(s)\ll \log(|\Im (s)|+3).
\ena
\end{remark}
To evaluate the integral in \eqref{S_1}, we shift both contours to the line $\Re(u)=\Re(v)=1/\log T$ without passing any poles and truncate the $v$-integral at
$|\Im(v)|\leq T$ with an error of $O(T^{-1+\varepsilon})$. Now, we have
\begin{equation}\label{New S_1}
\begin{split}
\mathcal{S}_1=\frac{\zeta(1+2\sigma)}{(2\pi i)^2}\int_{\big(\frac{1}{\log T}\big)}\int_{\frac{1}{\log T}-iT}^{\frac{1}{\log T}+iT}
&\,\frac{\zeta(1+u+v+2\sigma)}{\zeta(1+u+2\sigma)\zeta(1+v+2\sigma)}H_1(u,v;\sigma)\\
&\qquad\qquad\cdot\frac{L^{2u}-L^u}{u^2}\frac{L^{2v}-L^v}{v^2}
\frac{\mathrm{d} u\mathrm{d}v}{(\log L)^2}+O(T^{-1+\varepsilon}).
\end{split}
\end{equation}
Shifting the $u$-integral to $\mathcal{C}_1$ given by
\bna
\mathcal{C}_1:=\bigg\{u: \Re(u)=-2\sigma-\frac{c}{\log^{3/4}(2+|\Im(u)|)}\bigg\},
\ena
where $c>0$ is a suitable constant such that $\zeta(s)$ has no zero in the region $\Re(s)>1-c\log^{-3/4}(2+|\Im(s)|)$ and
$c\log^{-3/4}(2)<1/3$,
we encounter two simple poles at $u=0$ and $u=-2\sigma-v$. The first pole yields a residue
\begin{equation}\label{main term S_1}
\begin{split}
&\frac{1}{2\pi i}\int_{\frac{1}{\log T}-iT}^{\frac{1}{\log T}+iT}
H_1(0,v;\sigma)\frac{L^{2v}-L^v}{v^2}\frac{\mathrm{d}v}{\log L}
=1+O(T^{-\delta/2-2\delta\sigma+\delta\varepsilon}).
\end{split}
\end{equation}
Here we extend the integration to the full line with the error term $O\big(T^{-1+\varepsilon}\big)$,
and shift the contour to $\Re(v)=-1/2-2\sigma+\varepsilon$.
The second pole yields a residue
\begin{equation}\label{residue 4}
\begin{split}
&\frac{\zeta(1+2\sigma)}{(\log L)^2}
\frac{1}{2\pi i}\int_{\frac{1}{\log T}-iT}^{\frac{1}{\log T}+iT}
\frac{L^{-4\sigma}-L^{-v-4\sigma}+L^{-2\sigma}
-L^{v-2\sigma}}{\zeta(1+v+2\sigma)\zeta(1-v)}H_1(-v-2\sigma,v;\sigma)
\frac{\mathrm{d}v}{(v+2\sigma)^2v^2}.
\end{split}
\end{equation}
Note that the contributions from the first three terms, that is,
$\frac{L^{-4\sigma}-L^{-v-4\sigma}+L^{-2\sigma}}{(v+2\sigma)^2v^2}$, are small.
Indeed, by shifting the integration to the contour $\mathcal{C}_{11}$ where
\begin{equation*}
\begin{split}
\mathcal{C}_{11}:=&\left[\frac{1}{\log T}-iT, \frac{1}{\log T}-i\right]\cup
\left[\frac{1}{\log T}-i, \frac{1}{4}-i\right]\cup\left[\frac{1}{4}-i, \frac{1}{4}+i\right]\\
&\cup\left[\frac{1}{4}+i, \frac{1}{\log T}+i\right]\cup\left[\frac{1}{\log T}+i, \frac{1}{\log T}+iT\right],
\end{split}
\end{equation*}
and using the standard bounds for the Riemann zeta function, one can show that the contributions from the the first three terms
are bounded by
\begin{equation}\label{bound 9}
\begin{split}
O\left(\frac{T^{-2\delta\sigma}}{\log^2T}\right).
\end{split}
\end{equation}
Now we consider the contribution from $\frac{L^{v-2\sigma}}{(v+2\sigma)^2v^2}$ in \eqref{residue 4}.
We move the line of integration to the contour $\mathcal{C}_{12}$ where
\begin{equation*}
\begin{split}
\mathcal{C}_{12}:=&\left[\frac{1}{\log T}-iT, -2\sigma-\frac{c}{\log^{3/4}T}-iT\right]\\ &\cup
\left[-2\sigma-\frac{c}{\log^{3/4}T}-iT, -2\sigma-\frac{c}{\log^{3/4}T}+iT\right]\\
&\cup\left[-2\sigma-\frac{c}{\log^{3/4}T}+iT, \frac{1}{\log T}+iT\right],
\end{split}
\end{equation*}
for the same constant $c$ in the definition of $\mathcal{C}_1$,
and pick up contributions from two simple poles at $v=0$ and $v=-2\sigma$.
The pole at $v=0$ contributes
\begin{equation}\label{residue 2}
\begin{split}
\frac{L^{-2\sigma}}{(2\sigma\log L)^2}H_1(-2\sigma,0;\sigma).
\end{split}
\end{equation}
The pole at $v=-2\sigma$ contributes
\begin{equation}\label{residue 3}
\begin{split}
-\frac{L^{-4\sigma}}{(2\sigma\log L)^2}H_1(0,-2\sigma;\sigma).
\end{split}
\end{equation}
Moreover, we find the $v$-integral on $\mathcal{C}_{12}$ can be bounded by
\begin{equation}\label{bound 3}
\begin{split}
O\left(T^{-4\delta\sigma}e^{-c'\log^{1/4}T}\right),\quad\text{for some}\,\,c'>0.
\end{split}
\end{equation}
The remaining integral, for $u$ of \eqref{New S_1} on $\mathcal{C}_1$, is bounded by
\begin{equation}\label{bound 4}
\begin{split}
O\left(T^{-2\delta\sigma}e^{-c'\log^{1/4}T}\right),\quad\text{for some}\,\,c'>0,
\end{split}
\end{equation}
in view of standard bounds for $\zeta(s)$ in the zero-free region.
Combining \eqref{main term S_1} and \eqref{bound 9}--\eqref{bound 4}, we conclude that
\begin{equation}\label{S_1 contribution}
\begin{split}
\mathcal{S}_1=1+O(T^{-2\delta\sigma}).
\end{split}
\end{equation}

\subsection{The treatment of $\mathcal{S}_2$}
Recall that
\begin{equation*}
\begin{split}
\mathcal{S}_2=\zeta(1-2\sigma)\sum_{d}\sum_{(\ell_1,d)=1}\sum_{(\ell_2,d)=1}\frac{x_{d\ell_1}x_{d\ell_2}}{d^{1+2\sigma}\ell_1\ell_2}
\frac{1}{\mathcal{H}}\frac{1}{192\pi^5}
\int_{\Re(\mu)=0} h_{T,M}(\mu)\prod_{k=1}^{3}\left(-\frac{\mu^2_k}{4\pi^2}\right)^{-\sigma}\mathrm{d}_{\rm{spec}}\mu.
\end{split}
\end{equation*}
Using the trivial estimates, we obtain
\begin{equation}\label{S_2 trivial bound}
\begin{split}
\mathcal{S}_2\ll T^{-6\sigma}\sum_{d \leq L^2}\,\sum_{\ell_1\leq L^2/d}\,\sum_{\ell_2\leq L^2/d}
\frac{1}{d^{1+2\sigma}\ell_1\ell_2}\ll T^{-6\sigma}\log^2T.
\end{split}
\end{equation}
Next we focus on getting a better bound for $\mathcal{S}_2$, and define
\begin{equation*}
\begin{split}
\Delta:=\sum_{d}\sum_{(\ell_1,d)=1}\sum_{(\ell_2,d)=1}\frac{x_{d\ell_1}x_{d\ell_2}}{d^{1+2\sigma}\ell_1\ell_2}.
\end{split}
\end{equation*}
By the definition \eqref{x_l definition} of $x_{d\ell_i}$, ($i=1,2$), we have
\begin{equation}\label{Delta}
\begin{split}
\Delta=&\frac{1}{(2\pi i)^2}\int_{(3)}\int_{(3)}G_2(u,v;\sigma)\frac{L^{2u}-L^u}{u^2}\frac{L^{2v}-L^v}{v^2}
\frac{\mathrm{d} u\mathrm{d}v}{(\log L)^2}\\
\end{split}
\end{equation}
where
\begin{equation*}
\begin{split}
G_2(u,v;\sigma)=&\,\sum_{d}\frac{\mu^2(d)}{d^{1+u+v+2\sigma}}\sum_{(\ell_1,d)=1}\sum_{(\ell_2,d)=1}
\frac{\mu(\ell_1)\mu(\ell_2)}{\ell_1^{1+u}\ell_2^{1+v}}\\
=&\,\prod_p\big(1-p^{-1-u}-p^{-1-v}+p^{-1-u-v-2\sigma}+p^{-2-u-v}\big)\\
=&\,\frac{\zeta(1+u+v+2\sigma)}{\zeta(1+u)\zeta(1+v)}H_2(u,v;\sigma).
\end{split}
\end{equation*}
The Euler product defining $H_2(u,v;\sigma)$ converges absolutely in the region
\bna
\min\big\{\Re(u)+\Re(v)+2\sigma,\,\Re(u),\,\Re(v)\big\}>-1/2.
\ena
We may assume that $\sigma<\frac{100\log\log T}{\log T}$ since otherwise $T^{-2\sigma}<T^{-\sigma}/(\log T)^{100}$
and the integral may be bounded directly by using standard bounds for $\zeta(s)$ and $\zeta^{-1}(s)$ to the right of the $1$-line
(see also the trivial bound in \eqref{S_2 trivial bound}).
To study \eqref{Delta}, we shift the $u$, $v$-integrals to the lines $\Re(u)=\Re(v)=1/\log T$
without passing any poles and truncate the $v$-integral at
$|\Im(v)|\leq T$ with the error term $O(T^{-1+\varepsilon})$.
Now we have
\begin{equation}\label{new Delta}
\begin{split}
\Delta=\frac{1}{(2\pi i)^2}\int_{\big(\frac{1}{\log T}\big)}\int_{\frac{1}{\log T}-iT}^{\frac{1}{\log T}+iT}
&\,\frac{\zeta(1+u+v+2\sigma)}{\zeta(1+u)\zeta(1+v)}H_2(u,v;\sigma)\\
&\qquad\cdot\frac{L^{2u}-L^u}{u^2}\frac{L^{2v}-L^v}{v^2}
\frac{\mathrm{d} u\mathrm{d}v}{(\log L)^2}+O(T^{-1+\varepsilon}).
\end{split}
\end{equation}
Shifting the $u$-integral to $\mathcal{C}_2$ given by
\begin{equation*}\label{Contour C_2}
\begin{split}
\mathcal{C}_2:=\bigg\{u: \Re(u)=-\frac{c}{\log^{3/4}(2+|\Im(u)|)}\bigg\},
\end{split}
\end{equation*}
for the same constant $c$ in the definition of $\mathcal{C}_1$, we encounter a pole at $u=-v-2\sigma$, which yields a residue
\begin{equation}\label{residue 5}
\begin{split}
&\frac{1}{(\log L)^2}
\frac{1}{2\pi i}\int_{\frac{1}{\log T}-iT}^{\frac{1}{\log T}+iT}
\frac{L^{-4\sigma}-L^{-v-4\sigma}+L^{-2\sigma}-L^{v-2\sigma}}{\zeta(1-v-2\sigma)\zeta(1+v)}H_2(-v-2\sigma,v;\sigma)
\frac{\mathrm{d}v}{(v+2\sigma)^2v^2}.
\end{split}
\end{equation}
Note that the contributions from the first three terms, that is,
$\frac{L^{-4\sigma}-L^{-v-4\sigma}+L^{-2\sigma}}{(v+2\sigma)^2v^2}$, are small.
Indeed, by shifting the integration to the contour $\mathcal{C}_{21}$ where
\begin{equation*}
\begin{split}
\mathcal{C}_{21}:=&\left[\frac{1}{\log T}-iT, \frac{1}{\log T}-i\right]\cup
\left[\frac{1}{\log T}-i, \frac{1}{4}-i\right]\cup\left[\frac{1}{4}-i, \frac{1}{4}+i\right]\\
&\cup\left[\frac{1}{4}+i, \frac{1}{\log T}+i\right]\cup\left[\frac{1}{\log T}+i, \frac{1}{\log T}+iT\right],
\end{split}
\end{equation*}
and using the standard bounds for the Riemann zeta function, one can show that the contributions from the the first three terms
are bounded by
\begin{equation}\label{bound 5}
\begin{split}
O\left(\frac{T^{-2\delta\sigma}}{\log^2T}\right).
\end{split}
\end{equation}
Now we consider the contribution from $\frac{L^{v-2\sigma}}{(v+2\sigma)^2v^2}$ in \eqref{residue 5}.
We move the line of integration in \eqref{residue 5} to the contour $\mathcal{C}_{22}$ where
\begin{equation*}
\begin{split}
\mathcal{C}_{22}:=&\left[\frac{1}{\log T}-iT, -\frac{c}{\log^{3/4}T}-iT\right]\cup
\left[-\frac{c}{\log^{3/4}T}-iT, -\frac{c}{\log^{3/4}T}+iT\right]\\
&\cup\left[-\frac{c}{\log^{3/4}T}+iT, \frac{1}{\log T}+iT\right],
\end{split}
\end{equation*}
for the same constant $c$ in the definition of $\mathcal{C}_1$,
and pick up contributions from the poles at $v=0$ and $v=-2\sigma$.
The pole at $v=0$ contributes
\begin{equation}\label{residue 6}
\begin{split}
-\frac{L^{-2\sigma}}{\zeta(1-2\sigma)(2\sigma\log L)^2}H_2(-2\sigma,0;\sigma).
\end{split}
\end{equation}
The pole at $v=-2\sigma$ contributes
\begin{equation}\label{residue 7}
\begin{split}
\frac{L^{-4\sigma}}{\zeta(1-2\sigma)(2\sigma\log L)^2}H_2(0,-2\sigma;\sigma).
\end{split}
\end{equation}
Similarly, we find the integration in \eqref{residue 5} on $\mathcal{C}_{22}$ can be bounded by
\begin{equation}\label{bound 6}
\begin{split}
O\left(T^{-2\delta\sigma}e^{-c'\log^{1/4}T}\right),\quad\text{for some}\,\,c'>0.
\end{split}
\end{equation}
The remaining integral, for $u$ of \eqref{new Delta} on $\mathcal{C}_2$, is bounded by
\begin{equation}\label{bound 7}
\begin{split}
O\left(e^{-c'\log^{1/4}T}\right),\quad\text{for some}\,\,c'>0.
\end{split}
\end{equation}
Combining \eqref{new Delta} and \eqref{bound 5}--\eqref{bound 7}, and noting that
$H_2(-2\sigma,0;\sigma)=H_2(0,-2\sigma;\sigma)=1$, we obtain
\begin{equation*}
\begin{split}
\Delta=\frac{L^{-2\sigma}(-1+L^{-2\sigma})}{(2\sigma\log L)^2}
+O\left(\frac{T^{-2\delta\sigma}}{\log^2T}\right).
\end{split}
\end{equation*}
Further, we conclude that
\begin{equation}\label{S_2 asymptotic formula}
\begin{split}
\mathcal{S}_2\ll T^{-2\sigma\delta}.
\end{split}
\end{equation}
\subsection{The contribution of the error terms}
By the trivial estimates, we have
\begin{equation}\label{ET1 contribution}
\begin{split}
\mathcal{E}_1\ll T^{-3/4-6\sigma}\sum_{d \leq L^2}\,\sum_{\ell_1\leq L^2/d}\,\sum_{\ell_2\leq L^2/d}
\frac{1}{d^{1+2\sigma}\ell_1\ell_2}\ll T^{-3/4-6\sigma+\varepsilon},
\end{split}
\end{equation}

\begin{equation}\label{ET2 contribution}
\begin{split}
\mathcal{E}_2\ll T^{-1-3\sigma+\varepsilon}\sum_{d \leq L^2}\,\sum_{\ell_1\leq L^2/d}\,\sum_{\ell_2\leq L^2/d}
\frac{1}{d^{1+2\sigma}(\ell_1\ell_2)^{1/2+\sigma}}\ll T^{-1-3\sigma+(2-4\sigma)\delta+\varepsilon},
\end{split}
\end{equation}
and
\begin{equation}\label{ET3 contribution}
\begin{split}
\mathcal{E}_3&\ll T^{-43/42-41\sigma/21+\varepsilon}
\sum_{d \leq L^2}\,\sum_{\ell_1\leq L^2/d}\,\sum_{\ell_2\leq L^2/d}
\frac{(\ell_1\ell_2)^{7/64}}{d^{1+2\sigma}(\ell_1\ell_2)^{1/2+\sigma}}\\
&\ll T^{-43/42-41\sigma/21+39\delta/16-4\sigma\delta+\varepsilon}
\ll T^{-2\sigma\delta},
\end{split}
\end{equation}
provided for $\delta\leq 1/3$.
Thus, combining \eqref{S_1 contribution} and \eqref{S_2 asymptotic formula}--\eqref{ET3 contribution},
we complete the proof of Proposition \ref{second moment}.
\\ \
\section{Proof of Theorem \ref{zero-density estimate}}\label{sec:Proof of main theorem}
In the proof of Theorem \ref{zero-density estimate}, we shall need the following summation formula,
which is an application of the $\rm GL(3)$ Kuznetsov trace formula.
\begin{theorem}\label{application 1}
Let $P=m_1n_1m_2n_2\neq 0$. We have for any $\varepsilon>0$ that
\begin{equation*}
\begin{split}
&\sum_{j} \frac{h_{T,M}(\mu_{j})}{\mathcal{N}_j} \overline{A_{j}(m_1, m_2)} A_{j}(n_1, n_2)\\
&\quad=\delta_{n_1, m_1} \delta_{n_2, m_2}\mathcal{H}
+O\left((TP)^\varepsilon\big(TP^{1/2}+T^3+TM^2P^\vartheta\big)\right),
\end{split}
\end{equation*}
where $\mathcal{H}$ is defined in \eqref{mathcal{H}} and $\vartheta=7/64$ is defined in \eqref{vartheta}.
\end{theorem}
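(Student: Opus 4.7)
The plan is to apply the Kuznetsov trace formula (Lemma \ref{lemma: KTF}) with the test function $h=h_{T,M}$ itself, i.e., with no $W$-factor attached. First I would check that $h_{T,M}$ satisfies the hypotheses: the symmetrizing sum over $\mathcal{W}$ in its definition guarantees Weyl-invariance, the factor $P(\mu)^2$ enforces the vanishing $h(3\nu_j\pm 1)=0$, and the Gaussian $\psi$ produces rapid decay. The formula then rewrites the spectral sum as
\begin{equation*}
\sum_{j} \frac{h_{T,M}(\mu_{j})}{\mathcal{N}_j} \overline{A_{j}(m_1, m_2)} A_{j}(n_1, n_2)
= \Delta + \Sigma_4 + \Sigma_5 + \Sigma_6 - \mathcal{E}_{\min} - \mathcal{E}_{\max},
\end{equation*}
with $\Delta=\delta_{n_1,m_1}\delta_{n_2,m_2}\mathcal{H}$ precisely the asserted main term by the definition \eqref{mathcal{H}} of $\mathcal{H}$. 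It therefore remains to bound each of the other five terms.

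For the Eisenstein contributions I would argue as in \S\ref{subsec:contribution from Eisenstein}--\S\ref{subsec:contribution from Eisenstein2}, but since no $W_\mu$-factor is now present, no subconvexity input is required---only trivial bounds on the coefficients and $\mathcal{N}_{\mu,g}^{-1}$. The support of $h_{T,M}(\mu+\mu_g,\mu-\mu_g,-2\mu)$ restricts $\mu$ to an interval of length $\asymp M$ and $\mu_g$ to an interval of the same length about a point of size $T$, so by Weyl's law \eqref{Weyl law on GL(2)} there are $\ll TM$ such $g$. Combining this count with $|B_{\mu,g}(m,n)|\ll (mn)^{\vartheta+\varepsilon}$ and \eqref{N max} yields $\mathcal{E}_{\max}\ll (TP)^\varepsilon TM^2 P^\vartheta$, which is the third error term. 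A similar but easier argument via \eqref{min Eisenstein coefficient property} gives $\mathcal{E}_{\min}\ll (TP)^\varepsilon M^2$, which is absorbed into $T^3$ since $M^2\ll T^3$ in the regime $\theta<1$.

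For the Kloosterman pieces I would combine the pointwise size bound $\Phi_{w_4},\Phi_{w_6}\ll T^{3+\varepsilon}$ from Lemma \ref{truncate 4th} with the truncations of Lemmas \ref{truncate 2nd}(1) and \ref{truncate 3rd}(1) and the Weil-type bounds of \S\ref{subsec:Kloosterman sum}. In $\Sigma_4$, the constraint $m_2 D_1=n_1 D_2^2$ with $D_2\mid D_1$ reduces the double sum to a single parameter $D_2$, and applying Larsen's bound \eqref{Larsen bound} in either of its two branches ($\tilde S/(D_1D_2)\ll D_2/D_1$ or $\ll 1/D_2$, up to $(D_1D_2)^\varepsilon$) produces a convergent $D_2$-sum in each case, so that $\Sigma_4\ll (TP)^\varepsilon T^3$; the term $\Sigma_5$ is symmetric. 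In $\Sigma_6$, Lemma \ref{truncate 3rd}(1) restricts the modulus range to $D_1D_2\ll P/T^{4-\varepsilon}$, and applying Stevens' square-root cancellation bound together with $\Phi_{w_6}\ll T^{3+\varepsilon}$ gives $\Sigma_6\ll T^{3+\varepsilon}\sum_{D_1D_2\leq P/T^{4-\varepsilon}}(D_1D_2)^{-1/2+\varepsilon}\ll (TP)^\varepsilon TP^{1/2}$. The principal subtlety is that invoking Lemma \ref{truncate 3rd}(2) in place of Lemma \ref{truncate 4th} would introduce an extraneous $M^2$ into these bounds; since the target $TP^{1/2}+T^3$ contains no such factor, the sharper Lemma \ref{truncate 4th} estimate must be used throughout, and the on-average behavior of gcd factors in Stevens' bound must be controlled carefully. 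Assembling the three Kloosterman estimates with the Eisenstein bounds yields the full error term claimed in the theorem.
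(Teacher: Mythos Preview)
Your proposal is correct and mirrors the paper's proof: apply Kuznetsov with $h_{T,M}$, bound the Eisenstein pieces trivially to obtain the $TM^2P^{\vartheta}$ term, and combine the truncations of Lemmas~\ref{truncate 2nd}(1), \ref{truncate 3rd}(1) with the sharp pointwise bound of Lemma~\ref{truncate 4th} and the Weil-type Kloosterman estimates to produce $T^3$ from $\Sigma_4,\Sigma_5$ and $TP^{1/2}$ from $\Sigma_6$ --- including your key observation that Lemma~\ref{truncate 4th} rather than Lemmas~\ref{truncate 2nd}(2)/\ref{truncate 3rd}(2) is what avoids an unwanted $M^2$. The only points the paper makes more explicit are that Lemma~\ref{truncate 1st} is invoked first to bring $D_1,D_2$ into the polynomial range where Lemma~\ref{truncate 4th} applies, and that the truncated $\Sigma_4$-sum over $D_2$ is bounded by $(TP)^\varepsilon$ via Larsen's bound (with gcd factors tracked) rather than being literally convergent.
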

\begin{proof}
The proof follows closely the strategy of Buttcane--Zhou \cite[Theorem 3.3]{BZ} or Blomer \cite[Theorem 5]{Blomer1},
which is given here for completeness.
Firstly, Lemmas \ref{truncate 2nd} and \ref{truncate 3rd} imply that for any fixed $B>0$,
\begin{equation*}
\begin{split}
\Phi_{w_6}\!
\left(-\frac{\epsilon_2m_1n_2D_2}{D_1^2},-\frac{\epsilon_1 m_2n_1D_1}{ D_2^2}\right)\ll T^{-B},
\end{split}
\end{equation*}
unless $D_1D_2\ll PT^{-4}(TP)^\varepsilon$, and
\begin{equation*}
\begin{split}
\Phi_{w_4}\!\left(\frac{\epsilon m_1m_2n_2}{D_1 D_2} \right)\ll T^{-B},
\end{split}
\end{equation*}
unless $D_1D_2\ll PT^{-3}(TP)^\varepsilon$.
As mentioned as before, in order to use these lemmas, it is necessary to know the $D_1,\,D_2$-sums may be terminated at some
high power of $T$. At that point we may replace $h_{T,M}$ with a smooth, compactly supported test function satisfying
\eqref{property1 for h}.
In the current context,
\cite[Lemma 3]{Blomer1} becomes
\begin{equation*}
\begin{split}
\sum_{\epsilon_1,\epsilon_2=\pm 1} \sum_{D_1\leq X_1}\sum_{D_2\leq X_2}
\frac{\big|S(\epsilon_2n_2,\epsilon_1n_1,m_1,m_2;D_1, D_2)\big|}{D_1D_2}
\ll(X_1X_2)^{1/2+\varepsilon}(m_1n_2, m_2n_1)^\varepsilon,
\end{split}
\end{equation*}
as the proof goes through essentially unchanged.
So we use the upper bounds $\Phi(y)\ll T^{3+\varepsilon}$
from Lemma \ref{truncate 4th}, and obtain $\Sigma_6\ll P^{1/2}T(TP)^\varepsilon$. Note that the different
sharp of the summation region may be handled by dyadic partitions.
Similarly, Larsen's bound \eqref{Larsen bound} implies
\begin{equation*}
\begin{split}
\sum_{\epsilon=\pm 1} \sum_{\substack{D_2 \mid D_1\\  m_2 D_1=n_1 D_2^2\\ D_1D_2\ll X}}
\frac{\big|\tilde{S}(-\epsilon n_2, m_2, m_1; D_2, D_1)\big|}{D_1D_2}
\ll X^{\varepsilon}\sum_{\substack{n_1|m_2D_1\\ m_2D_1^2\ll n_1X}}
\frac{(m_1, D_1)}{D_1}\ll X^\varepsilon(m_2n_1)^\varepsilon,
\end{split}
\end{equation*}
So we have $\Sigma_4\ll T^3(TP)^\varepsilon$. The treatment of the $w_5$-term is identical.

As discussed in \S\,\ref{subsec:contribution from Eisenstein} and \S\,\ref{subsec:contribution from Eisenstein2},
here we only treat the contribution of the maximal Eisenstein series,
since the minimal Eisenstein series can be handled similarly and the contribution will be smaller.
Combining the known bounds on the $L$-functions and the $\rm GL(2)$ Weyl law
together with definitions of $h_{T,M}$, we see that the essential region of the integration
on $\mu$ is of length $\asymp M$, and the essential number of the sum of $\mu_g$ is of
size $\asymp TM$.
Thus we imply the trivial bound $\mathcal{E}_{\min}+\mathcal{E}_{\max}\ll TM^2P^\vartheta(TP)^\varepsilon$
and complete the proof of this theorem.
\end{proof}
We record here frequently used estimate
$$\mathcal{H}=\frac{1}{192\pi^5} \int_{\Re(\mu)= 0} h_{T,M}(\mu) \mathrm{d}_{\rm{spec}}\mu=CT^3M^2+O(T^2M^3),$$
for some constant $C>0$ (see Buttcane--Zhou \cite[pg.\,15]{BZ}). Taking $m_1=n_1=m_2=n_2=1$ in Theorem \ref{application 1} immediately gives
\begin{corollary}\label{corollary}
With the assumptions in Theorem \ref{application 1}. We have for small $\varepsilon>0$ that
\begin{equation*}
\begin{split}
&\sum_{j} \frac{h_{T,M}(\mu_{j})}{\mathcal{N}_j}
=CT^3M^2+O(T^2M^3+T^{3+\varepsilon}).
\end{split}
\end{equation*}
\end{corollary}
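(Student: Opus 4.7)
The plan is to derive the corollary as an immediate specialization of Theorem \ref{application 1} combined with the quoted asymptotic for $\mathcal{H}$. First I would set $m_1 = n_1 = m_2 = n_2 = 1$ in Theorem \ref{application 1}. With this choice $P = m_1 n_1 m_2 n_2 = 1$, so $P^{1/2} = P^{\vartheta} = 1$, and the Kronecker delta product $\delta_{n_1, m_1}\delta_{n_2, m_2}$ equals $1$. Moreover the normalization $A_j(1,1) = 1$ recorded in the introduction (hence $\overline{A_j(1,1)} = 1$ as well) collapses the left-hand side of Theorem \ref{application 1} exactly to $\sum_j h_{T,M}(\mu_j)/\mathcal{N}_j$, while the diagonal main term becomes $\mathcal{H}$.

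With these substitutions Theorem \ref{application 1} gives
\begin{equation*}
\sum_{j} \frac{h_{T,M}(\mu_{j})}{\mathcal{N}_j}
= \mathcal{H} + O\bigl(T^{\varepsilon}(T + T^3 + T M^2)\bigr).
\end{equation*}
Since $M = T^{\theta}$ with $0 < \theta < 1$, both $T^{1+\varepsilon}$ and $T^{1+\varepsilon} M^2$ are dominated by $T^{3+\varepsilon}$, so the error simplifies to $O(T^{3+\varepsilon})$.

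Next I would substitute the quoted estimate $\mathcal{H} = C T^3 M^2 + O(T^2 M^3)$ from \cite{BZ} to expose the main term in the desired explicit form. Combining this with the previous display yields
\begin{equation*}
\sum_{j} \frac{h_{T,M}(\mu_{j})}{\mathcal{N}_j}
= C T^3 M^2 + O(T^2 M^3 + T^{3+\varepsilon}),
\end{equation*}
which is exactly the corollary. There is no genuine obstacle here: all the substantive analysis is already contained in the proof of Theorem \ref{application 1} (Kloosterman-sum bounds of Larsen and Stevens, the decay estimates for the kernels $\Phi_w$ from Lemmas \ref{truncate 2nd}--\ref{truncate 4th}, and the Eisenstein contributions controlled by the subconvexity bounds \eqref{subconvexity bounds} together with Weyl's law \eqref{Weyl law on GL(2)}), as well as in the quoted evaluation of $\mathcal{H}$. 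The only elementary check is that the inherited $TM^2 P^{\vartheta} = TM^2$ term is absorbed into $T^{3+\varepsilon}$ under the standing assumption $M = T^{\theta} < T$.
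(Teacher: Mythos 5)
Your proposal is correct and follows exactly the route the paper itself takes: it specializes Theorem \ref{application 1} to $m_1=n_1=m_2=n_2=1$ (so $P=1$ and $A_j(1,1)=1$ collapse the left side), absorbs the error terms $T$, $T^3$, $TM^2$ into $T^{3+\varepsilon}$ using $M=T^\theta$ with $\theta<1$, and then substitutes the quoted expansion $\mathcal{H}=CT^3M^2+O(T^2M^3)$ from \cite{BZ}. No further comment is needed.
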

Now we give the \textbf{proof of Theorem \ref{zero-density estimate}}.
We will apply Lemma \ref{The argument principle} to $L(s,\phi_j)M(s,\phi_j)$ with box bounded
by $1/2+1/\log T\pm 2iH$ and $3/2\pm 2iH$.
For $2/\log T<\sigma<1/2$, we have
\begin{equation*}
\begin{split}
N(\sigma, H; \phi_j):=&\sum_{L(1/2+\beta+i\gamma,\phi_j)=0\atop\beta>\sigma,\,|\gamma|<H}1
\leq\sum_{L(1/2+\beta+i\gamma,\phi_j)=0\atop\beta>2/\log T,\,|\gamma|<H}1
\leq \log T\sum_{L(1/2+\beta+i\gamma,\phi_j)=0\atop\beta>1/\log T,\,|\gamma|<H}\left(\beta-\frac{1}{\log T}\right)\\
\leq&\, 4H(\log T) \sum_{L(1/2+\beta+i\gamma,\phi_j)=0\atop\beta>1/\log T,\,|\gamma|<H}\cos\left(\frac{\pi \gamma}{2H}\right)
\sinh\left(\frac{\pi(\beta-1/\log T)}{2H} \right).
\end{split}
\end{equation*}
Since the zeros of $L(s,\phi_j)$ are also zeros of the function $L(s,\phi_j)M(s,\phi_j)$,
we use Lemma \ref{The argument principle} with $\omega(s)=L(s,\phi_j)
M(s,\phi_j):=LM(s,\phi_j)$, $W_0=1/\log T$ and $W_1=1$, and get that
$N(\sigma, H; \phi_j)$ is
\begin{equation*}
\begin{split}
&\leq\,\log T\bigg(\int_{-H}^H\cos\left(\frac{\pi t}{2H}\right)
\log|LM(1/2+1/\log T+it,\phi_j)|\mathrm{d}t\\
&+\int_{\frac{1}{\log T}}^{1}\sinh\left(\frac{\pi\big(\alpha-1/\log T\big)}{2H}\right)
\log\left|LM\big(1/2+\alpha+iH,\phi_j\big)
LM\big(1/2+\alpha-iH,\phi_j\big)\right|\mathrm{d}\alpha\\
&-\Re\int_{-H}^H\cos\left(\frac{\pi (1-1/\log T+it)}{2iH}\right)\log LM(3/2+it,\phi_j)\mathrm{d}t\bigg).
\end{split}
\end{equation*}
Since always
\bna
\log|x|\leq \frac{|x|^2-1}{2},
\ena
and the definition \eqref{N(sigma,H)}, $N(\sigma, H)$ is
\begin{equation*}
\begin{split}
\leq&\,\frac{\log T}{2}\bigg\{\int_{-H}^H\cos\left(\frac{\pi t}{2H}\right)
\frac{1}{\mathcal{H}}\sum_{j}\frac{h_{T,M}(\mu_j)}{\mathcal{N}_j}\left(|LM(1/2+1/\log T+it,\phi_j)|^2-1\right)\mathrm{d}t\\
&+\int_{\frac{1}{\log T}}^{1}\sinh\left(\frac{\pi(\alpha-1/\log T)}{2H}\right)
\frac{1}{\mathcal{H}}\sum_{j}\frac{h_{T,M}(\mu_j)}{\mathcal{N}_j}\\
&\qquad\qquad\times\bigg(\left|LM\big(1/2+\alpha+iH,\phi_j\big)\right|^2
+\left|LM\big(1/2+\alpha-iH,\phi_j\big)\right|^2-2\bigg)\mathrm{d}\alpha\\
&-2\Re\int_{-H}^H\cos\left(\frac{\pi (1-1/\log T+it)}{2iH}\right)
\frac{1}{\mathcal{H}}\sum_{j}\frac{h_{T,M}(\mu_j)}{\mathcal{N}_j}\log LM(3/2+it,\phi_j)\mathrm{d}t\bigg\}.
\end{split}
\end{equation*}
Using Proposition \ref{second moment} and Corollary \ref{corollary}, for some sufficiently small $\delta,\,\theta_1>0$, we conclude that
\begin{equation*}
\begin{split}
N(\sigma, H)\ll &\,HT^{-\sigma\delta}\log T,
\end{split}
\end{equation*}
uniformly in $3/\log T<H<T^{\theta_1}$.

\bigskip
\noindent{\bf Acknowledgements}
The authors would like to thank Professor Sheng-Chi Liu for his valuable suggestions on this topic.
H. Wang would like to thank the Alfr\'{e}d R\'{e}nyi Institute of Mathematics for providing a great working environment
and the China Scholarship Council for its financial support.

\bigskip

\end{document}